\def\titlerunning#1{\gdef\titrun{#1}}
\def\author#1{\gdef\autrun{\def\and{\unskip, }#1}\gdef\@author{#1}}
\def\address#1{{\def\and{\\\hspace*{18pt}}\renewcommand{\thefootnote}{}%
\footnote {#1}}%
\markboth{\autrun}{\titrun}}
\def\email#1{e-mail: #1}
\def\subjclass#1{{\renewcommand{\thefootnote}{}%
\footnote{\emph{Mathematics Subject Classification (2010):} #1}}}
\newcommand{\Pbb}[1]{\Pb\lb #1\rb}
\newcommand{\Qbb}[1]{\Qb\lb #1\rb}
\newcommand{\Ebb}[1]{\Eb\lbb #1\rbb}
\newcommand{\LL}{L\'{e}vy }
\newcommand{\PP}{\overline{\Pi}}
\newcommand{\tto}[1]{_{#1\to 0}}
\newcommand{\ttinf}[1]{_{#1\to \infty}}
\newcommand{\Eb}{\mathbb{E}}
\newcommand{\Nb}{\mathbb{N}}
\newcommand{\Rb}{\mathbb{R}}
\newcommand{\Pb}{\mathbb{P}}
\newcommand{\Qb}{\mathbb{Q}}
\newcommand{\Bc}{\mathcal{B}}
\newcommand{\Cc}{\mathcal{C}}
\newcommand{\Dc}{\mathcal{D}}
\newcommand{\Ec}{\mathcal{E}}
\newcommand{\Fc}{\mathcal{F}}
\newcommand{\Oc}{\mathcal{O}}
\newcommand{\Tc}{\mathcal{T}}
\newcommand{\Wc}{\mathcal{W}}
\newcommand{\ind}[1]{1_{#1}}
\newcommand{\lb}{\left (}
\newcommand{\rb}{\right )}
\newcommand{\lbb}{\left [}
\newcommand{\rbb}{\right ]}
\newcommand{\labs}{\left |}
\newcommand{\rabs}{\right |}
\newcommand{\lbrb}[1]{\lb #1 \rb}
\newcommand{\lbbrbb}[1]{\lbb#1\rbb}
\newcommand{\labsrabs}[1]{\labs#1\rabs}
\newcommand{\lbcurly}{\left\{}
\newcommand{\rbcurly}{\right\}}
\newcommand{\lbcurlyrbcurly}[1]{\lbcurly#1\rbcurly}
\newcommand{\twopartdef}[4]
{
	\left\{
	\begin{array}{ll}
		#1 & \mbox{if } #2 \\
		#3 & \mbox{if } #4
	\end{array}
	\right.
}
\newcommand{\gN}[1]{\sqrt{g(#1)}}
\newtheorem{proposition}{Proposition}
\newtheorem {lemma}{Lemma}
\newtheorem {theorem}{Theorem}
\newtheorem {Corollary}{Corollary}
\theoremstyle{definition}
\newtheorem{remark}[theorem]{Remark}
\numberwithin{equation}{section}
\numberwithin{equation}{section}
\numberwithin{equation}{section}
\numberwithin{equation}{section}
\begin{document}


\baselineskip=17pt


\titlerunning{Integral Test and Repulsion Envelope}

\title{Transience and recurrence of a Brownian path with limited local time and its repulsion envelope}

\author{Martin Kolb
\and 
Mladen Savov}

\date{}

\maketitle

\address{M. Kolb: Department of Mathematics and Statistics,
  University of Reading, Whiteknights, Reading RG6 6AX, UK; \email{M.Kolb@reading.ac.uk}
\and
M. Savov: Department of Mathematics and Statistics,
  University of Reading, Whiteknights, Reading RG6 6AX, UK; \email{m.savov@reading.ac.uk}}

\begin{abstract}
In this note we investigate the behaviour of Brownian motion conditioned on a growth constraint of its local time which has been previously investigated by Berestycki and Benjamini. For a class of non-decreasing positive functions $f(t), t\geq 0$, we consider the Wiener measure under the condition that the Brownian local time is dominated by the function $f$ up to time $T$. In the case where $f(t)/t^{3/2}$ is integrable we describe the limiting process as $T \rightarrow \infty$. Moreover, we prove two conjectures in \cite{BB1} in the case for a class of functions $f$, for which $f(t)/t^{3/2}$ just fails to be integrable. Our methodology is more general as it relies on the study of the uniform asymptotic of the probability of subordinators to stay above a given family of curves. Immediately, one can study questions like the Brownian motioned conditioned on a growth constraint of its local time at the maximum.
\end{abstract}

\subjclass{Primary 60J55 ; Secondary 60G17, 60J65}


\section{Introduction}
Let $(B_t)_{t \geq 0}$ be a one dimensional standard Brownian motion. In this paper, by developing a very general methodology for studying the asymptotic of the probability of increasing \LL processes (subordinators) to stay above a given curve, we study the behaviour of Brownian paths, which have a limited growth of local time at the origin. Following previous work \cite{BB1} of Berestycki and Benjamini we consider the problem of describing the measures 
\begin{displaymath}
\mathbb{P}_t:=\mathbb{P}\lbrb{\cdot \mid L_s \leq f(s),\, \forall s \leq t}=\Pbb{.\mid \tau_{f(s)}>s,\,\forall s\leq t}
\end{displaymath}
in the limit $t \rightarrow \infty$, where $(L_s)_{s \geq 0}$ denotes the local time of $(B_t)_{t\geq 0}$ at the origin, $(\tau_s)_{s\geq 0}$ its right-inverse and $f : [0,\infty)\rightarrow [0,\infty)$ is a suitable non-negative increasing function satisfying some additional mild properties.

Let us now describe the main results of \cite{BB1} in more detail. It is shown that the family of probability measures $\mathbb{P}_t$ on the canonical path space $\mathcal{C}=C([0, \infty),\mathbb{R})$ is in fact tight and thus has limit points. Furthermore, the authors manage to show that the condition 
\begin{equation}\label{eq:mainCondition}
I(f)=\int_1^{\infty}\frac{f(t)}{t^{\frac{3}{2}}}\,dt < \infty
\end{equation}
implies that every weak limit point $\mathbb{Q}$ of $\mathbb{P}_t$, as $t \rightarrow \infty$, is transient almost surely. This means in particular that restricting the local time growth to be smaller than $f(t)=\sqrt{t}(\log t)^{-1-\epsilon},\epsilon>0,$ already results in a significant change of the original recurrent Brownian motion. Observe that this might be surprising as the typical growth of the local time coincides with $\sqrt{t}$ and thus we only require a slightly slower than average growth.

This intriguing result immediately leads to the question whether the tight family $\mathbb{P}_t$ is in fact weakly convergent and whether one can in some way interpret its limit. Exactly this question leads to one part of the present contribution.

In \cite{BB1} it is conjectured that \eqref{eq:mainCondition} is the precise dividing line between every possible weak limit of $\Pb_t$ being recurrent or transient. In our current work we show that this integral distinguishes between recurrence and transience by elaborating a method which captures all classes of functions $f$ such that $\lim\ttinf{t}f(t)\ln^{4/5+\epsilon}(t)/t^{1/2}=0$, for some $\epsilon>0$, when $I(f)=\infty$ and $\lim\ttinf{t}f(t)\ln^{1/2}(t)/t^{1/2}=0$ when $I(f)<\infty$. Given that the functions $f(t)=t^{1/2}/\ln(t)$ and $f(t)=t^{1/2}/\ln^{1+\epsilon}(t)$ are on the two sides of the integral test \eqref{eq:mainCondition} we see that our restriction is irrelevant for the most critical region. We further develop the results of \cite{BB1} in several different directions:
\begin{itemize}
\item First we show that $\Qb=\lim\ttinf{t}\Pb_t$ exists. We explicitly identify the limit in the case $I(f)<\infty$ and further prove that it corresponds to a recurrent process  if $I(f)=\infty$. This settles two questions left open in \cite{BB1}. 
\item Furthermore, motivated by Conjecture 2 of \cite{BB1} we say that an increasing function $w$ is in the repulsion envelope of $f$ if even $\lim\ttinf{t}\Qb\lbrb{L_t<f(t)/w(t)}=1$. Using our methods we manage to analytically describe the repulsion envelope of $f$ by providing a simple explicit criterion which provides a necessary and sufficient condition (NASC) for $w$ to be in the envelope of $f$. The quantifies the idea of entropic repulsion which is often used in the physics literature. 
\end{itemize}

Observe that the general scheme of conditioning on an unlikely event has similarities to papers on quasistationary distribtuions (see e.g. \cite{C}), penalizations (see e.g.\cite{RY}) as well as to approaches investigated in the area of polymer models (see e.g. \cite{vdHK}, \cite{MS08} and \cite{N}). The questions considered in this paper (as well as our methods) still differ from the just mentioned ones in the sense that one of our main aims is to study the phenomenon of entropic repulsion in a simple but still highly non-trivial situation. This phenomenon has  already been the main topic of several previous studies such as \cite{BB}, \cite{BB1} and \cite{BGMS} and usually refers to the fact that conditioning on an unlikely event often results in a process whose behaviour appears to be even more unlikely than the one which the process is conditioned on. In our setting the phenomenon of entropic repulsion is most clearly visible in Theorem 4.4 which proves that the repulsion envelope is not empty. 

Let us describe the structure of the paper. In the next section we set up the problem, the notation, present some basic facts that will be used later, provide a short discussion on the strategy of the proof {and discuss the scope of our methodology}. In Section \ref{sec:transience} we consider the case $I(f)<\infty$ and describe the limiting process and prove that it is transient. In Section \ref{sec:recurrence} under mild assumptions we discuss the case when $I(f)=\infty$ and show that the limiting process exists and is recurrent which is solves Conjecture 1 in \cite{BB1}. Additionally, we determine analytically the repulsion envelope showing that it is never empty thus settling Conjecture 2 in \cite{BB1}. In Section \ref{sec:differentialEqnandEstimates} we provide the basic ODE which allows us to estimate in various ways the quantity $\Pbb{\Oc_t}$ namely the probability of the event we condition on. The last parts are devoted to the proofs.

\section{Notation and Discussion}\label{sec:notation}
\subsection{Basic notation}\label{subsec:basicNotation}
We use throughout the paper the following conventions. First we use $f\sim g$ to denote that $\lim\ttinf{t}\frac{f(t)}{g(t)}=1$ and $\mu_t(ds)\sim \nu_t(ds)$ to denote that the densities $m_t(s),v_t(s)$ of the measures $\mu_t,\nu_t$ satisfy $\lim\ttinf{t}\frac{m_t(s)}{v_t(s)}=1$ for each finite $s>0$ where we preclude the possibility of $v_t(s)=0$. Similarly, we use $f\asymp g$ to denote the existence of two constants $0<D_1<D_2<\infty$ such that $$D_1\leq \liminf\ttinf{t}\frac{f(t)}{g(t)}\leq\limsup\ttinf{t}\frac{f(t)}{g(t)}\leq D_2$$
with the same meaning for measures indexed by $t$ at the level of their densities, see above for $\sim$. The notation $f\lesssim g$ respectively $f\gtrsim f$ then imply the existence of $D_2$ respectively $D_1$ above. 

Throughout the paper we also use the convention that $C$ will be an absolute positive and finite constant, whereas $C(A,B,\cdots)$ will denote an absolute constant not depending of any variables but $A,B,\cdots$. 
\subsection{The boundary function $f(x)$ and its inverse $g(x)$}\label{subsec:f}
Without loss of generality we will assume that $f(1)=1$, $1>f(0)>0$ and that $f:\Rb\mapsto \Rb^+$ is an increasing function which drifts to infinity. We impose the following mild growth condition:
\begin{equation}\label{eq:growthCondition}
(0,\infty) \ni x\mapsto \frac{f(x)}{\sqrt{x}}\, \text{ is decreasing and } \lim_{x\rightarrow \infty}\frac{f(x)}{\sqrt{x}} = 0.
\end{equation}
 Often we work with $g(x):=f^{-1}(x)$ for which \eqref{eq:mainCondition} is with the help of \eqref{eq:growthCondition} translated to 
\begin{equation}\label{eq:mainConditionG}
I(f)<\infty \iff J(g):=\int_{1}^{\infty}\frac{1}{\gN{s}}ds<\infty.
\end{equation}
Observe that we can continuously and monotonously extend the function $g$ to the interval $(0,f(0))$. Note that since $f(0)>0$ we have that for $x\in (0,f(0))$, $g(x)<0$.

\subsection{Brownian motion, Local time, inverse Local time and related quantities}
In this paper we work with a standard Brownian motion $B=\lbrb{B_s}_{s\geq 0}$. Recall that for a real-valued Brownian motion the local time at zero can be defined as
\[L_t=\lim\tto{\varepsilon}\frac{1}{2\varepsilon}\int_{0}^{t}1_{\{B_s\in\lbrb{-\varepsilon,\varepsilon}\}}ds.\]
The local time is a continuous, non-decreasing process which grows precisely on the set $\{s\geq 0: B_s=0\}$.
It is well known that its right-inverse local time $\tau=\lbrb{\tau_t}_{t\geq 0}$, where 
\begin{equation}\label{eq:tau}
\tau_u=\inf\{t>0:\,L_t>u\}
\end{equation}
 is a stable subordinator of index $1/2$, i.e. a non-decreasing \LL process without drift whose \LL measure is given by $\Pi(ds)=Kds/s^{3/2}, s>0$, where $K:=1/\sqrt{2\pi}$, and the \LL-Khintchine exponent of $\tau_1$ is given by
\begin{equation}\label{eq:LevyKhintchine}
\Ebb{e^{-\lambda\tau_1}}=e^{-K\int_{0}^{\infty}\lbrb{1-e^{-\lambda s}}\frac{ds}{s^{3/2}}}=e^{-K\sqrt{\lambda}\int_{0}^{\infty}\lbrb{1-e^{-s}}\frac{ds}{s^{3/2}}},
\end{equation}
where in view of working with subordinators which are obtained from $\tau$ by truncating some of its jumps we do not compute explicitly $\int_{0}^{\infty}\lbrb{1-e^{-\lambda s}}\frac{ds}{s^{3/2}}$ as any truncation will be reflected in the region of integration.

Furthermore,  the law and the density of $\tau_u$ can be computed via 
\begin{align}\label{eq:lawTau}
\nonumber&\Pbb{\tau_u>t}=\Pbb{\tau_1>t/u^2}=\frac{u}{\sqrt{2\pi t}}\int_{-1}^{1}e^{-\frac{(ux)^2}{2t}}dx=\frac{2u}{\sqrt{\pi}}\int_{0}^{\frac{1}{\sqrt{2t}}}e^{-(ur)^2}dr,\quad\text{ for $t>0$}\\
&\Pbb{\tau_u\in dt}=\frac{ue^{-\frac{u^2}{2t}}}{\sqrt{2\pi t^3}}dt=f_u(t)dt,\quad\text{ for $t>0$}.
\end{align}
Then with $f$, $g$ given above we see that 
\begin{equation}\label{eq:equivalenceToExpectation}
I(f)<\infty\iff J(g)<\infty\iff \Ebb{f(\tau_1)}<\infty.
\end{equation} 

Note that the jumps of the subordinator $\tau$ correspond to the lengths of the excursions of the Brownian motion away from zero, which is due to \eqref{eq:tau} and therefore the fact that $L_t$ is a constant on span of each excursion away from zero. In more technical detail, the excursions are paths in $\Cc$ with the following properties: $\varepsilon\in\Cc$, $\varepsilon(0)=0$, $\varepsilon(t)>0$ or $\varepsilon(t)<0$, for $\forall t<\zeta(\varepsilon)$, $\varepsilon(t)=0$, $t\geq \zeta(\varepsilon)$, where $\zeta$ is called the length or life-time of the excursion and determines a jump in the subordinator $\tau$. We refer to \cite{BB1} for a very good exposition of excursions for this setting and \cite[Chapter IV]{B97} for more general \LL processes.

Finally, we denote by $\lbrb{\Fc_t}_{t\geq 0}$ the natural filtration of the inverse local time $\tau$ which is via standard random time change generated by the natural filtration of the Brownian motion.

\subsection{The event on which the process is conditioned}
Throughout the paper it will be convenient to work with the inverse local time $\tau$. We use that the following sets are equal:

\begin{equation}\label{eq:O}
\Oc_t=\{\tau_s>g(s);\,s\leq t\}=\{L_{g(s)}\leq s;\, s\leq t\}=\{L_{u}\leq f(u);\, u\leq g(t)\},\,\text{ $\forall t>0$.}
\end{equation}
This definition slightly differs from the sets $\Ec_t=\Oc_{f(t)}$ used in \cite{BB1}. This difference is irrelevant for the limit.
 
Important functions in our study will be $\phi(t)=\Phi'(t)=\Pbb{\Oc_t}$, where
\begin{equation}\label{eq:Phi}
\Phi(t)=\int_{0}^{t}\Pbb{\Oc_s}ds.
\end{equation}
In Section \ref{sec:differentialEqnandEstimates} we provide the explicit asymptotic behaviour of $\phi(t)$ and $\Phi(t)$ via an ordinary linear differential equation of first order which links $\phi$ and $\Phi$. These are the results at the heart of our main theorems. One might find it surprising that such precise estimates can be given for such highly dependent events. In fact $\Oc_t$ depends on the whole path of the process $\tau$ up to time $t$.

\subsection{Discussion and strategy for the proof}\label{subsec:strategyProof}
Since we condition on $\Oc_t$ the results naturally depend on the knowledge about the asymptotic of $\phi(t)=\Pbb{\Oc_t}$. Since $\phi(t)$ and $\Phi(t)$ are linked by a linear differential equation \eqref{eq:differentialEquation} of the type
\begin{equation*}\label{eq:differentialEquation}
\phi(t)-\frac{2K}{\sqrt{\varphi(t)}}\Phi(t)=H(t),
\end{equation*}
This can be solved and the function $H(t)$ can be estimated rather precisely.  This allows us to provide very sharp results on the asymptotic of $\phi(t)$ and $\Phi(t)$. The differential equation itself arises by simply conditioning on the time of a first jump of $\tau$ that will take $\tau$ above $g(t)$ which removes the dependence on the future. The fact that we can estimate $H$ comes from the one-large jump principle which roughly states that one large jump determines the large deviation behaviour of $\tau$. Since $\lim\ttinf{t}g(t)/t^2=\infty$ and by scaling $\Pbb{\tau_t>ct^2}=\Pbb{\tau_1>c}$ we see that we are in the regime of large deviations for $\Pbb{\tau_t>g(t)}$ and the one-large jump principle is expected to hold true for $\Pbb{\Oc_t}$. However, this is a harder to verify in our scenario as $\Oc_t$ depends on the entire past of the process.  

Due to the heavy space-time dependence revealed for example by 
\[\Pbb{\tau_h\in dy;\,\Oc_t}=\Pbb{\tau_{s}>g(s+h)-y;\,\forall 0\leq s\leq t-h}\Pbb{\tau_h\in dy;\Oc_h},\]
where the function $g(s)\mapsto g(s+h)-y$, information on $\phi(t)$ and $\Phi(t)$ does not suffice. Using the same differential equation \eqref{eq:differentialEquation} for each point $(h,y)$ and function $g_{y,h}(s):=g(s+h)-y$ we are able to prove some uniform bounds for $\phi^h_y(t)$ and $\Phi^h_y(t)$. When $I(f)<\infty$ these bounds do not require heavy calculations. In the situation $I(f)=\infty$ these are much harder. Precisely in this case we need the condition $\lim\ttinf{t}f(t)\ln^{4/5+\epsilon}(t)/t^{1/2}=0$ but we have no doubt that the exponent $4/5$ can be made much smaller. However, this would unnecessary burden the exposition of the paper and our condition in any case captures the transition from $I(f)<\infty\to I(f)=\infty$.

Once suitable bounds on  $\phi^h_y(t)$ and $\Phi^h_y(t)$ are settled, it is a matter of dominated convergence theorem and the tightness of $\Pbb{\tau_h\in\,.\mid \Oc_t}$ to show that in the two scenarios $I(f)<\infty$ and $I(f)=\infty$ the limiting process exists and it is correspondingly transient and recurrent. However, the estimates can be used even further. An estimate of the quantity $\Qbb{\tau_h\in\lbrb{g(h),g(h)w(h)}}$ can be made very precise and analytical which allows us to prove a NASC for $\lim\ttinf{h}\Qbb{\tau_h\in\lbrb{g(h),g(h)w(h)}}=0$ which in other words distinguishes the functions in the repulsion envelope.

\subsection{Brownian motion conditioned on the growth of its local time at its maximum}
The inverse local time $\tau$ for the reflected Brownian motion $\sup_{s\leq t}B_s-B_t$ has the same law as the inverse local time at zero. Since all our results rely first on the distribution of the inverse local time under the limit measure $\Qb=\lim\ttinf{t}\Pb_t$  and then on splicing of excursions of the Brownian motion we see that all results are of the same type. The difference between the transient and recurrent regime consists in that in the former the all time maximum is obtained in a finite time and then the negative of a Bessel three process is issued forth as in Theorem \ref{thm:transience}. The Bessel three process in this setting occurs in the form of an excursion with infinite life time.

\section{Transient Case}\label{sec:transience}
Recall that $\Cc=C([0, \infty),\mathbb{R})$ is the space of continuous functions indexed by the time $t$ and denote by $\Wc$ the Wiener measure on $\Cc$.

Next define the family of random variables $\mathfrak{C}_t$ with $\text{supp}\, \mathfrak{C}_t=[0,t]$ via their densities as follows
\begin{equation}\label{eq:clocks}
\Pbb{\mathfrak{C}_t\in ds}=\frac{\Pbb{\Oc_s}}{\int_{0}^{t}\Pbb{\Oc_v}dv}ds=\frac{\phi(s)}{\Phi(t)}ds, \text{ for $0<s\leq t$}.
\end{equation} 
Recall that $\Oc_t=\{\tau_s>g(s),\,0\leq s\leq t\}$. Denote by $\Delta^{g(t)}_1=\{s>0\,:\,\tau_s-\tau_{s-}>g(t)\}$. The clocks approximate very precisely the underlying structure namely the fact that the conditions represented by $\Oc_t$ are satisfied with dominating probability by the arrival of one jump larger than $g(t)$, i.e.
\[\Pbb{\Oc_t}\sim\Pbb{\Oc_t\cap \Delta^{g(t)}_1\leq t}.\]
 Conditioned upon arrival on $[0,t]$ the jump has uniform distribution which subsequently is reweighted in \eqref{eq:clocks} to reflect the additional assumption that $\Oc_s$ must hold until time $\Delta^{g(t)}_1$. This size of the large jump for $\tau$ is in fact the length of an excursion of the Brownian motion away from zero. In the limit this excursion conditioned to last more than $g(t)$ converges to the three dimensional Bessel process, see for example \cite[p.10--11]{BB1}, though this is a standard result in the probability folklore. 

When $\mathfrak{C}=\lim\ttinf{t}\mathfrak{C}_t$ exists in a weak sense, namely iff $\Phi(\infty)<\infty$, then it has a density function $$\Pbb{\mathfrak{C}\in ds}=\frac{\Pbb{\Oc_s}}{\Phi(\infty)}ds,\,s\geq 0.$$ Define the process $(Y_t)_{t \geq 0}$ in the following way: choose independent copies of the clock $\mathfrak{C}$; of $B=\lbrb{B_s}_{s\geq 0}$; of $\varpi\in \{-1,1\}$ with $\Pbb{\varpi=1}=1/2$; and of $B^{(3)}=\lbrb{B^{(3)}_s}_{s\geq0 }$, where $B^{(3)}$ is a three dimensional Bessel process; then 
\begin{enumerate}
\item Conditionally on $\{\mathfrak{C}=x\}$ run $B$ conditioned on $\{L_s\leq f(s);\,s\leq \tau_x\}$ (note that $L_{\tau_x}=x$) and put $Y_s$ to coincide with this conditioned process for $s\leq \tau_x$. 
\item Choose $1$ or $-1$ according to $\varpi$;
\item For $t>\tau_x$ put $\lbrb{Y_{t}}_{t\geq \tau_x}=\lbrb{\varpi B^{(3)}_{t-\tau_x}}_{t\geq \tau_x}$.
\end{enumerate}
The next result shows that under $\Qb$, $B$ equals precisely $Y$ whenever $I(f)<\infty$.
\begin{theorem}\label{thm:transience}
Assume that $f$ is as defined in part \ref{subsec:f}, \eqref{eq:mildCondition} and $I(f)<\infty$. Then  $\mathfrak{C}=\lim\ttinf{t}\mathfrak{C}_t$ exists in a weak sense and furthermore $\Qbb{.}=\lim\ttinf{t}\Pb_t(.)$ in the sense of the weak topology on the space $\Cc$. Under the measure $\Qb$, the process $B$ equals the process $Y$. Moreover, for any fixed $h>0$ and any $y>g(h)\vee 0$ we have the formula for the density of $\tau$ under $\Qb$
\begin{equation}\label{eq:tauTransience}
\Qbb{\tau_h\in dy}:=\frac{\Phi^h_y(\infty)}{\Phi(\infty)}\Pbb{\tau_h\in dy;\Oc_h},
\end{equation}
where $\Phi^h_y(\infty)=\int_{0}^{\infty}\Pbb{\tau_s>g(s+h)-y,\,s\leq v}dv<\infty$ is part of the claim. Therefore, $\Qbb{\tau_h<\infty}=\Pbb{\mathfrak{C}> h}$. Finally, for any measurable $\Bc\subset\Oc_h$ and $\Bc\in\Fc_h$ we have that
\begin{equation}\label{eq:h-transformTransience}
\Qbb{\Bc}=\Ebb{\frac{\Phi^h_{\tau_h}(\infty)}{\Phi(\infty)};\Bc}.
\end{equation} 
\end{theorem}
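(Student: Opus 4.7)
The plan is to pass to the limit in $\Pb_t$ first at the level of the one-dimensional marginals of the inverse local time $\tau$, then at all finite-dimensional distributions, and finally promote this to convergence on $\Cc$ via the tightness of $(\Pb_t)$ proved in \cite{BB1}. Everything is driven by two consequences of Section~\ref{sec:differentialEqnandEstimates}: under $I(f)<\infty$, $\Phi(\infty)<\infty$, and the ODE \eqref{eq:differentialEquation} together with the one-large-jump control of $H(t)$ gives a sharp enough asymptotic
\[
\phi(t)\sim\frac{2K}{\sqrt{g(t)}}\,\Phi(\infty)\quad\text{as } t\to\infty,
\]
together with the analogous asymptotic for every shifted boundary $g_{y,h}(s):=g(s+h)-y$.

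For the marginal at $\tau_h$ with $y>g(h)\vee 0$, the Markov property of $\tau$ at time $h$ gives
\[
\Pbb{\tau_h\in dy,\,\Oc_t}=\Pbb{\tau_h\in dy,\,\Oc_h}\,\phi^h_y(t-h),\qquad \phi^h_y(v):=\Pbb{\tau_s>g(s+h)-y,\,s\leq v}.
\]
Applying the very same ODE to $g_{y,h}$ in place of $g$ yields $\Phi^h_y(\infty)<\infty$ and, since $g(t+h)-y\sim g(t)$, the convergence $\phi^h_y(t-h)/\phi(t)\to\Phi^h_y(\infty)/\Phi(\infty)$. Dividing by $\phi(t)=\Pbb{\Oc_t}$ and letting $t\to\infty$ produces the density formula \eqref{eq:tauTransience}. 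Integrating this in $y$, Fubini converts $\int_{g(h)\vee 0}^{\infty}\Pbb{\tau_h\in dy,\,\Oc_h}\,\Phi^h_y(\infty)$ into $\int_0^\infty\Pbb{\Oc_{h+v}}\,dv=\Phi(\infty)-\Phi(h)$, which divided by $\Phi(\infty)$ equals $\Pbb{\mathfrak{C}>h}$ by \eqref{eq:clocks}. The $h$-transform identity \eqref{eq:h-transformTransience} then follows by a monotone class argument, since on $\Oc_h$ the conditional law of $(\tau_{h+s})_{s\geq 0}$ given $\Fc_h$ depends on $\Fc_h$ only through $\tau_h$, and the factor $\Phi^h_y(\infty)/\Phi(\infty)$ in \eqref{eq:tauTransience} is exactly the associated Radon--Nikodym derivative.

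To identify the limit on all of $\Cc$, I would iterate the conditioning step at times $0<h_1<\cdots<h_n$ to obtain the joint law of $(\tau_{h_i})_{i\leq n}$ under $\Qb$, combine it with the tightness from \cite{BB1} to get convergence of finite-dimensional distributions, and then read off the structure of the limit. On $\{\mathfrak{C}>h\}=\{\tau_h<\infty\}$ the path of $B$ on $[0,\tau_h]$ is Brownian motion conditioned on $\{L_s\leq f(s),\,s\leq \tau_h\}$, matching step~(1) in the definition of $Y$. On $\{\mathfrak{C}\leq h\}$, the inverse local time has already jumped to $+\infty$, which means $B$ has committed to an excursion of infinite length starting at $\tau_{\mathfrak{C}}$; the standard identification of an excursion conditioned to exceed length $g(t)\to\infty$ with a two-sided Bessel$(3)$ process (cf.\ \cite[pp.~10--11]{BB1}) identifies this piece with $\varpi B^{(3)}$, the sign being chosen uniformly by the symmetry of the excursion measure, which completes the identification $B=Y$ under $\Qb$.

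The main obstacle is the uniformity of the asymptotic for $\phi^h_y$ in $y$ on bounded subsets of $(g(h)\vee 0,\infty)$ and in $h$. Controlling $H^h_y(t)$ requires the estimates of Section~\ref{sec:differentialEqnandEstimates} to supply an integrable dominating function so that the limit passage survives under the Fubini and dominated convergence arguments used above; the delicate region is $y\downarrow g(h)\vee 0$, where $g_{y,h}(s)=g(s+h)-y$ degenerates at $s=0$ and the ODE is singular. Under $I(f)<\infty$ a direct one-large-jump decomposition of $\Oc_t$ makes these bounds tractable, but producing them uniformly in $(h,y)$ is where the real work sits.
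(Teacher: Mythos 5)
Your proposal follows the paper's argument essentially line for line: the Markov-property decomposition at time $h$, the ODE asymptotics of Section~\ref{sec:differentialEqnandEstimates} to take the limit of $\Pbb{\Oc_{t-h}(h,y)}/\Pbb{\Oc_t}$, Fubini/monotone convergence to identify $\Qbb{\tau_h<\infty}=\Pbb{\mathfrak{C}>h}$, and the excursion-theoretic splicing with the Bessel$(3)$ limit to extend from $\tau$ to $B$. The paper carries out the $B=Y$ identification in more explicit detail by proving convergence of the triplet $\lbrb{\pi_h\lbrb{\tau},\Delta^{g(t)}_1,\epsilon_{\Delta^{g(t)}_1}}$ and then invoking It\^{o}'s excursion representation, while your description of the passage from marginal convergence to convergence on $\Cc$ is more schematic, but the substance is the same and no step is missing.
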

\begin{remark}
Note that this result is consistent with \cite[Theorem 2]{BB} where $f(s)\equiv1$ is studied despite that the inverse function $g$ is undefined. The clock there is a uniform random variable on $(0,1)$ and the local time is accumulated until this random variable is attained. Then a Bessel process with random sign is issued forth. The Bessel process is a result of the limit of longer and longer excursions away from zero which in turn are a consequence of the one-large jump principle.
\end{remark}
\begin{Corollary}\label{cor:transience}
We have that under $Q$ the process $B$ is transient, namely $\Qbb{\lim\ttinf{t}|B_t|=\infty}=1$ and even  $|B_s|_{s\geq\tau_{\mathfrak{C}-}}=B^{(3)}_{s-\tau_{\mathfrak{C}-}}$. Therefore after time $\tau_\mathfrak{C}$ the process is explicit and its density and rate of growth, which determines the speed of transience are computed as those of the three dimensional Bessel process.
\end{Corollary}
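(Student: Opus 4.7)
The plan is to derive Corollary \ref{cor:transience} as a direct consequence of the pathwise construction of $Y$ in Theorem \ref{thm:transience}, using that $B = Y$ in law under $\Qb$. The argument decomposes into three small checks: (i) the clock $\mathfrak{C}$ is almost surely finite, (ii) the splicing time $\tau_{\mathfrak{C}-}$ is almost surely finite, and (iii) after $\tau_{\mathfrak{C}-}$ the process coincides with a three-dimensional Bessel process up to a random sign, from which transience is immediate.

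For step (i), I would argue that since $I(f) < \infty$ we have $\Phi(\infty) < \infty$ by Theorem \ref{thm:transience}, so the density in \eqref{eq:clocks} passes to a bona fide probability density $s \mapsto \phi(s)/\Phi(\infty)$ on $[0,\infty)$, which forces $\mathfrak{C} < \infty$ almost surely. For step (ii), I would appeal to formula \eqref{eq:tauTransience}, which yields $\Qbb{\tau_h < \infty} = \Pbb{\mathfrak{C} > h}$. Hence on $\{\mathfrak{C} > h\}$ the variable $\tau_h$ is $\Qb$-a.s.\ finite; taking $h$ along a countable sequence increasing to $\mathfrak{C}$ and using monotonicity of $h \mapsto \tau_h$ produces the left limit $\tau_{\mathfrak{C}-}$, which is finite because the pre-splicing piece of $Y$ is a Brownian motion conditioned on an event defined inside the finite time window determined by the inverse local time of the conditioned Brownian motion at the finite level $\mathfrak{C}$.

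With (i) and (ii) in hand, step (iii) is a direct reading of the construction of $Y$: for $s > \tau_{\mathfrak{C}-}$ one has $Y_s = \varpi B^{(3)}_{s - \tau_{\mathfrak{C}-}}$ with $\varpi \in \{-1, +1\}$ symmetric and independent of $B^{(3)}$. Taking the absolute value erases $\varpi$ and yields the identity $|B_s|_{s \geq \tau_{\mathfrak{C}-}} = B^{(3)}_{s - \tau_{\mathfrak{C}-}}$ claimed in the corollary. The classical transience of the three-dimensional Bessel process, $B^{(3)}_t \to \infty$ almost surely, then delivers $\Qbb{\lim\ttinf{t}|B_t| = \infty} = 1$, while the density and rate of growth of $|B|$ beyond $\tau_{\mathfrak{C}-}$ are precisely those of $B^{(3)}$ (explicit transition density and law of the iterated logarithm at infinity).

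The main obstacle is step (ii): formally verifying that the left limit $\tau_{\mathfrak{C}-}$ is $\Qb$-a.s.\ finite, which amounts to ruling out the possibility that the conditioned Brownian motion takes an infinite amount of real time to accumulate a finite amount of local time. Everything else is a transparent reading of the construction of $Y$ in Theorem \ref{thm:transience} combined with textbook properties of the three-dimensional Bessel process; in this sense Corollary \ref{cor:transience} really just repackages the transience half of Theorem \ref{thm:transience}.
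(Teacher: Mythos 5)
Your proposal matches the paper's approach: the paper says only that Corollary~\ref{cor:transience} is immediate from Theorem~\ref{thm:transience}, and your three checks (finiteness of $\mathfrak{C}$, finiteness of the splicing time $\tau_{\mathfrak{C}-}$, and identification of the post-splicing path with a signed Bessel-three process) simply spell out why that is so. Your step (ii), which you flag as the main obstacle, is in fact already settled by the construction in Theorem~\ref{thm:transience}: conditionally on $\{\mathfrak{C}=x\}$ the pre-splicing piece is a Brownian motion conditioned on a positive-probability event, under which $\tau_x$ inherits the $\Pb$-a.s.\ finiteness of the inverse local time at the fixed level $x$.
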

We proceed with the recurrence case.
\section{Recurrent Case}\label{sec:recurrence}
\subsection{Weak limit and recurrence}
The recurrent case is much more demanding. We will impose the following condition as it suffices to capture the transition region:
\begin{equation}\label{eq:conditionGrowth}
\liminf\ttinf{t}\frac{g(t)}{t^2 \ln^{\frac{8}{5}+\epsilon}(t)}=\infty, \text{ for some $\epsilon>0.$}
\end{equation}
Condition \eqref{eq:conditionGrowth} undoubtedly can be further relaxed but this would require more precision in the heavy computations below and will add less value as we have already captured the transitions region with \eqref{eq:conditionGrowth}.

Under weaker condition $\liminf\ttinf{t}g(t)/t^{2}\ln(t)=\infty$ we can see from \eqref{eq:finitenessPhi}, Lemma \ref{lem:finitenessPhi} that the limit clock $\mathfrak{C}$ defined in Section \ref{sec:transience} does not exist since $\Phi(\infty)=\infty$. This in turn is a good indicator as to why the recurrence holds: still
\[\Pbb{\Oc_t}\sim\Pbb{\Oc_t\cap \Delta^{g(t)}_1\leq t},\] 
but, for any $a<\infty$,
\[\lim\ttinf{t}\frac{\Pbb{\Oc_t\cap \Delta^{g(t)}_1\leq a}}{\Pbb{\Oc_t}}=0,\]
 see \eqref{eq:clocks}, when $\Phi(\infty)=\infty$, and the long excursion, which is the cause of the Bessel process to appear in the transient scenario, is pushed away to infinity with probability one.

We have the following statement.
\begin{theorem}\label{thm:recurrent}
Let $f$ satisfy the usual conditions in part \ref{subsec:f}. Additionally, assume that $I(f)=\infty$ and \eqref{eq:growthCondition} holds. Then the limit  $\lim\ttinf{t}\Pbb{.\Big|\Oc_t}=\lim\ttinf{t}\Pb_t(.)=\Qb(.)$ exists and under $\Qb$ the process is recurrent, namely $$\Qbb{\exists t>T:\,B_t=0}=1,\,\,\forall T>0$$
Under $\Pb_t$ the inverse local time converges, as $t\to\infty$, to an increasing pure-jump process under $\Qb$ which we call the inverse local time under $\Qb$.
\end{theorem}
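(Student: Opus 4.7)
The plan is to identify the limit $\Qb$ explicitly as an $h$-transform on each $\Fc_h$. First, by the Markov property of $\tau$ applied at local-time level $h$, for any $\Bc\in\Fc_h$ with $\Bc\subset\Oc_h$ one has
\begin{equation*}
\Pb_t(\Bc)=\Ebb{\ind{\Bc}\,\frac{\phi^h_{\tau_h}(t-h)}{\phi(t)}},
\end{equation*}
where $\phi^h_y(v):=\Pbb{\tau_s>g(s+h)-y,\,s\leq v}$. Convergence of the finite-dimensional distributions therefore reduces to establishing a pointwise limit $L^h_y:=\lim\ttinf{t}\phi^h_y(t-h)/\phi(t)$ together with enough uniform control in $y$ to exchange limit and expectation.

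To compute this limit I would lean on the ODE of Section \ref{sec:differentialEqnandEstimates}, namely $\phi(t)-(2K/\sqrt{g(t)})\Phi(t)=H(t)$, together with its analogue for the shifted pair $(\phi^h_y,\Phi^h_y)$. Since $I(f)=\infty$ forces $\Phi(\infty)=\infty$, solving the ODE with the one-large-jump estimate on $H$ gives a sharp leading-order asymptotic of the form $\phi(t)\sim (2K/\sqrt{g(t)})\Phi(t)$, and running the same procedure on the shifted problem produces $\phi^h_y(t-h)\sim (2K/\sqrt{g(t+h)-y})\Phi^h_y(t-h)$. Combining these with $\sqrt{g(t)/(g(t+h)-y)}\to 1$ identifies the pointwise value of $L^h_y$, and the same analysis furnishes a quantitative bound $\phi^h_y(t-h)/\phi(t)\leq\psi(h,y)$ uniform in large $t$, with $\psi(h,\cdot)$ integrable against $\Pbb{\tau_h\in dy;\Oc_h}$.

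With these estimates in hand, dominated convergence applied to the displayed identity defines
\begin{equation*}
\Qb(\Bc):=\Ebb{\ind{\Bc}\,L^h_{\tau_h}},\qquad \Bc\in\Fc_h,
\end{equation*}
and since $\Ebb{\phi^h_{\tau_h}(t-h)/\phi(t);\Oc_h}=1$ holds for every $t$, the same dominated-convergence argument yields $\Ebb{L^h_{\tau_h};\Oc_h}=1$, so that $\Qb$ is a probability measure on $\Fc_h$. Consistency in $h$ extends $\Qb$ to the whole filtration, and combined with the tightness from \cite{BB1} this gives $\Pb_t\Rightarrow\Qb$ weakly. Recurrence then comes for free: the very same normalization reads $\Qbb{\tau_h<\infty}=\int L^h_y\,\Pbb{\tau_h\in dy;\Oc_h}=1$ for every $h>0$, hence $L_\infty=\infty$ $\Qb$-almost surely and the zero set of $B$ is $\Qb$-a.s.\ unbounded. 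The pure-jump character of $\tau$ under $\Qb$ is inherited from the stable subordinator structure under $\Pb$, because the Radon--Nikodym density on $\Fc_h$ depends only on the path up to local time $h$.

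The main obstacle is the second step, namely producing the $\Pbb{\tau_h\in dy;\Oc_h}$-integrable envelope $\psi(h,y)$ uniform in $t$. When $y$ only barely exceeds $g(h)$ the shifted boundary $s\mapsto g(s+h)-y$ is almost as restrictive as $g$ itself, so the ratio $\phi^h_y(t-h)/\phi(t)$ does not decay in $y$ near $g(h)$ and crude bounds do not suffice; the domination must be genuinely quantitative. This is precisely where the strengthened growth hypothesis \eqref{eq:conditionGrowth} with exponent $8/5+\epsilon$ enters, controlling the error term $H$ in the ODE finely enough to build the required majorant. After that, the remainder of the proof is a routine unwinding of the ODE asymptotics together with standard Markov and dominated-convergence arguments.
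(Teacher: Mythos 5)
Your proposal takes essentially the same route as the paper: the Markov-property identity $\Pb_t(\Bc)=\Eb\bigl[\ind{\Bc}\,\Pbb{\Oc_{t-h}(h,\tau_h)}/\Pbb{\Oc_t}\bigr]$, the ODE-driven asymptotics $\phi\sim 2K\Phi/\sqrt{g}$ in both the original and $(h,y)$-shifted form, dominated convergence via a $t$-uniform, $\Pbb{\tau_h\in dy;\Oc_h}$-integrable envelope (exactly the content of the paper's Lemma~\ref{lem:uniformIntegrability} and Lemma~\ref{lem:ConvergenceOfInverseLocalTime}), and the normalization $\Qbb{\tau_h<\infty}=1$ for all $h$ to deduce recurrence all match the paper's Proposition~\ref{cor:densityUnderQ} and its use in the proof of Theorem~\ref{thm:recurrent}, and you correctly locate the role of the $8/5+\epsilon$ growth hypothesis. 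The one step you leave implicit is the upgrade from convergence of the law of $\tau$ to weak convergence of $B$ on $\Cc$: tightness plus consistency of the $\tau$-marginals does not by itself pin down the path law of $B$, and the paper closes this by the excursion-splicing observation that $\Oc_t$ is a functional of $\tau$ alone, so the conditional law of the excursion process given $\tau$ is unaffected by the conditioning and the limit law of $\tau$ determines a unique limit law of $B$.
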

The increasing pure-jump process referred to in the above theorem is studied in more detail in Proposition \ref{cor:densityUnderQ}.

We proceed to utilize this information and discuss the phenomena of repulsion. 
\subsection{Repulsion envelope}
Let us define the set of functions $\Dc=\{w:[1,\infty)\mapsto [1,\infty):\text{$w$ is increasing to $\infty$}\}$ and $R_g=\Dc\cap\{w: \lim\ttinf{h}\Qbb{\tau_h\geq w(h)g(h)}=1\} $. We call $R_g$  the envelope of repulsion which means that in fact under $\Qb$ the inverse local time stays with increasing to one probability not only above $g$ but above $g_w:=gw$. Note that if $f_w=g^{-1}_w$ then solving for $u=f/f_w$ we see that $u\downarrow 0$ is such that $\lim\ttinf{t}\Qbb{L_t\leq f(t)u(t)}=1$. It is conjectured in \cite[Conjecture 2]{BB1} that $R_g\neq \emptyset$ with some further quite insightful comments as to the form of functions that comprise $R_g$. Our next result shows that one in fact can in a simple analytical way specify $R_g$. We are able to do this thanks to \eqref{eq:densityInverseLocalTime}. We have the following statement.

\begin{theorem}\label{thm:repulsion}
Let the conditions upon $f$ of Theorem \ref{thm:recurrent} hold. Let $w\in \Dc$ then we have 
\begin{equation}\label{eq:repulsionRegion}
w\in R_g \iff \lim\ttinf{h} \int_{h}^{f\lbrb{g(h)w(h)}}\frac{1}{\gN{s}}ds=0.
\end{equation}
\end{theorem}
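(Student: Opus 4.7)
The plan is to reduce Theorem \ref{thm:repulsion} to an asymptotic evaluation of $\Qbb{\tau_h \in \lbbrbb{g(h), g(h)w(h)}}$ and then to identify this quantity, via a change of variables, as essentially a constant multiple of $\int_h^{f(g(h)w(h))} ds/\gN{s}$. First, since $\Oc_t \supset \{\tau_h > g(h)\}$ for every $t \geq h$, the inequality $\tau_h \geq g(h)$ passes to the weak limit and holds $\Qb$-almost surely; together with the definition of $R_g$ this gives
\begin{equation*}
w \in R_g \iff \lim\ttinf{h} \Qbb{\tau_h \in \lbbrbb{g(h), g(h)w(h)}} = 0,
\end{equation*}
so the task is to pin down this probability asymptotically.

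Next I would use the Markov property of $\tau$ at time $h$: for $y > g(h)$,
\begin{equation*}
\Pbb{\tau_h \in dy;\, \Oc_t} = \Pbb{\tau_h \in dy;\, \Oc_h}\,\phi^h_y(t-h),
\end{equation*}
so that $\Pb_t(\tau_h \in dy) = \phi^h_y(t-h)\,\Pbb{\tau_h \in dy;\, \Oc_h}/\phi(t)$. Passing $t \to \infty$ via Proposition \ref{cor:densityUnderQ} and equation \eqref{eq:densityInverseLocalTime} produces an explicit density for $\tau_h$ under $\Qb$ in the recurrent regime. Substituting this density into the target probability and performing the change of variable $y = g(s)$ (so that $dy = g'(s)\,ds$ and $s$ ranges over $[h, f(g(h)w(h))]$), the \LL density $K/s^{3/2}$ that drives the asymptotics of Section \ref{sec:differentialEqnandEstimates} via the one-large-jump principle should cause the integrand to simplify to a universal constant times $1/\gN{s}$, giving the two-sided bound
\begin{equation*}
\Qbb{\tau_h \in \lbbrbb{g(h), g(h)w(h)}} \asymp \int_h^{f(g(h)w(h))} \frac{ds}{\gN{s}}
\end{equation*}
as $h \to \infty$. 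This asymptotic equivalence immediately yields \eqref{eq:repulsionRegion}.

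The main obstacle is the uniform asymptotic control required in the preceding step. One needs sharp, matching asymptotics for the ratio $\phi^h_{g(s)}(t-h)/\phi(t)$ and for the prefactor $\Pbb{\tau_h \in dy;\, \Oc_h}$ uniformly for $s \in [h, f(g(h)w(h))]$, an interval that is a priori unbounded. These uniform bounds come from Section \ref{sec:differentialEqnandEstimates} via the ODE \eqref{eq:differentialEquation} applied to the shifted boundaries $g_{y,h}(s) = g(s+h) - y$, and it is precisely the recurrent case $I(f) = \infty$ that forces the growth hypothesis \eqref{eq:conditionGrowth}. Once the uniform asymptotics are in place, dominated convergence closes the argument in both directions of the equivalence.
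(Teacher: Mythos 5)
Your outline follows the paper's strategy in broad strokes: reduce $w\in R_g$ to $\lim_h\Qbb{\tau_h\in(g(h),g(h)w(h))}=0$, plug in the explicit density from Proposition~\ref{cor:densityUnderQ}, change variables, and extract an integral criterion. But two steps in the middle hide genuine gaps.

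First, the claimed two-sided bound
\begin{equation*}
\Qbb{\tau_h\in\lbbrbb{g(h),g(h)w(h)}}\asymp\int_h^{f(g(h)w(h))}\frac{ds}{\gN{s}}
\end{equation*}
cannot be correct as written: the left side is a probability, hence at most $1$, while the right side is unbounded whenever $w$ grows fast enough (recall $\int_1^\infty ds/\gN{s}=\infty$ in the recurrent regime). The correct relation, obtained in the paper after the integration-by-parts identity $\int_1^{f(y)}\gN{s}^{-1}ds\sim\frac12\int_1^y f(s)s^{-3/2}ds$ and an exact evaluation of the resulting integral, is
\begin{equation*}
\Qbb{\tau_h\in\lbbrbb{g(h),g(h)w(h)}}\asymp 1-e^{-2K\int_h^{f(g(h)w(h))}\frac{ds}{\gN{s}}},
\end{equation*}
which vanishes if and only if the integral does. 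Your conclusion survives, but the step as stated would fail, and the sufficiency and necessity directions actually require separate treatment: for sufficiency one uses $\Phi^h_y(f(Ay))\le f(Ay)$, while for necessity one needs the lower bound $\Phi^h_y(f(Ay))\ge f(y)-h$ from \eqref{eq:lowerPhi} and must then show the extra $-h$ term and two flanking integrals are negligible. These bounds are not automatically matching, so a single $\asymp$ does not close both directions at once.

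Second, you attribute the needed control of the prefactor $\Pbb{\tau_h\in dy;\Oc_h}$ to the ODE estimates of Section~\ref{sec:differentialEqnandEstimates}, but that is not where it comes from. What is actually needed is a two-sided estimate
\begin{equation*}
\Pbb{\tau_h\in g(h)\,dy;\Oc_h}\asymp\Pbb{\Oc_h}\frac{dy}{y^{3/2}},\qquad y>20,
\end{equation*}
and this is the content of Lemma~\ref{lem:strongRepulsion}. Its proof is a separate and fairly delicate argument (decomposing on $\tau_T\le 2g(h)$, $\Delta_1=T$ with $S_\Delta<g(h)y/2$, and $S_\Delta>g(h)y/2$; disintegrating on $(\Delta_1,\tau_{\Delta_1-},S_\Delta)$; and concluding by contradiction using $\Delta_1^{20g(h)}$), not a consequence of the ODE machinery. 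Without this lemma you cannot pass from the density $q_h(y)$ to a genuine two-sided estimate of the integral $\int q_h(y)\Pbb{\tau_h\in dy;\Oc_h}$. You also need the preliminary observations, absent from your outline, that the contributions from $y\in(g(h),g(h+1))$ and more generally from $y\in(g(h),20g(h))$ are always negligible, which is what licenses restricting to $y>20g(h)$ where Lemma~\ref{lem:strongRepulsion} applies.
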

\begin{remark}
Take a function $f(t)=\sqrt{t}/\ln^{\gamma}(t)$ with $1>\gamma>4/5$ then we have that $g(t)\sim t^2\ln^{2\gamma}(t)$. Define $w_\gamma(t)=e^{\ln^\gamma(t)}$ and $g_{w_\gamma}=gw_\gamma$ and then easily $g^{-1}_{w_\gamma}(t)=: f_{w_\gamma}(t)\sim e^{-\kappa \ln^\gamma(t)}\sqrt{t}/\ln^\gamma(t)$, as $t\to\infty$ and some $\kappa >0$. Then using \eqref{eq:repulsionRegion} we can see the conjectured function $w_\gamma(t)$ is indeed the separating line of $R_g$ since for any $w$ such that $\ln{w}=o(\ln{w_\gamma})$ then $w\in R_g$ but in fact $w_\gamma\notin R_g$. Computing \eqref{eq:repulsionRegion} explicitly we can even have the simplified criterion $w\in R_g$ iff $\ln(w(h))=o(\ln^\gamma(h))$.
\end{remark}
\begin{remark}
The case $\gamma=1$ is the most interesting as it correspond to the case $g(t)\sim t^2\ln^2(t)$ at the boundary of our transition region. Then an easy computation yields that
\begin{align*}
&\lim\ttinf{h}\int_{h}^{f\lbrb{g(h)w(h)}}\frac{1}{\gN{s}}ds=\\
&\lim\ttinf{h}\ln\lbrb{\frac{\ln(h)+\ln(w(h))+\ln\ln(h)}{\ln(h)}}=0 \iff \ln(w(h))=o(\ln(h)).
\end{align*}
\end{remark}
\begin{remark}
We would like to point out that due to the fact that we estimate many quantities with constants bounded away from zero it will be difficult to study other probabilities like $\Qbb{\tau_h\in (g(h),g(h)w(h)}\to 1$ unless we have a zero-one law something we do not anticipate to be true.
\end{remark}
\section{Precise asymptotic estimates for $\Pbb{\Oc_t}$ and $\int\limits_{0}^{t}\Pbb{\Oc_s}ds$.}\label{sec:differentialEqnandEstimates}
The fact that $\tau$ is a stable subordinator and thus enjoys the so-called one large jump principle allows for the very precise study of the events $\Oc_t=\{\tau_s>g(s),s\leq t\}$ at least to a first order asymptotic. We recall that the one-large jump principle postulates that the probability of the subordinator to cross larger and larger barrier in an also expanding time horizon is asymptotically equivalent to the probability that the subordinator makes one jump of size exceeding the level of this barrier. It is clear that if this principle applies in this setting then the long-term dependency in the definition of $\Oc_t$ will be destroyed at the moment we make a jump bigger than $g(t)$. This is the main observation behind the ensuing estimates.  However, \eqref{eq:differentialEquation} holds in any situation, for any subordinator, and offers the opportunity for more general studies.

Recall that $\Pbb{\Oc_t}=\phi(t)$ and $\Phi(t)=\int_{0}^{t}\Pbb{\Oc_s}ds$. Then the following general  result holds.
\begin{theorem}\label{thm:differentialEquation}
For any function $\varphi(t)\geq g(t)\vee 1$, for $t>0$, we have that
\begin{equation}\label{eq:differentialEquation}
\phi(t)-\frac{2K}{\sqrt{\varphi(t)}}\Phi(t)=H(t),
\end{equation}
where with $\Delta_1^{\varphi(t)}=\inf\{s\geq 0:\tau_s-\tau_{s-}>\varphi(t)\}$ we have that $H(t)$ is defined as follows 
\begin{align}\label{eq:H(t)}
\nonumber &H(t):=\Pbb{\tau_s>g(s),s\leq t;\,\Delta_1^{\varphi(t)}>t}-\\
&\frac{4K^2}{\varphi(t)}\int_{0}^{t}\int_{0}^{s}\Pbb{\tau_u>g(u),u\leq v\,\big|\Delta^{\varphi(t)}_1=v}e^{-\frac{2Kv}{\sqrt{\varphi(t)}}}dvds.
\end{align}
Denote by 
    \begin{equation}\label{eq:rho}
    \rho(t):=\frac{H(t)}{\Phi(t)}.
    \end{equation}
Then, for any $t\geq t_0 \geq 1$, and $\varphi(t)=g(t)$
 \begin{equation}\label{eq:solutionODE}
    \Phi(t)=\Phi(t_0)e^{\int_{t_0}^{t}\frac{2K}{\sqrt{g(s)}}ds+\int_{t_0}^{t}\rho(s)ds}.
    \end{equation}
\end{theorem}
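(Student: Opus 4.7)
The plan is to derive \eqref{eq:differentialEquation} from a single conditioning on the first jump of $\tau$ of size exceeding $\varphi(t)$, and then read off \eqref{eq:solutionODE} as an elementary first-order linear ODE. Fix $t>0$ and set $\lambda:=2K/\sqrt{\varphi(t)}$; since the L\'evy measure of $\tau$ is $Ks^{-3/2}\,ds$ on $(0,\infty)$, one has $\int_{\varphi(t)}^{\infty} Ks^{-3/2}\,ds = \lambda$, so the L\'evy--It\^o decomposition yields $\tau=\tau^{s}+\tau^{b}$, with $\tau^{s}$ a subordinator whose jumps lie in $(0,\varphi(t)]$ and $\tau^{b}$ an independent compound Poisson process of rate $\lambda$ whose first arrival time is $\Delta^{\varphi(t)}_{1}$. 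Write $\psi(v):=\Pbb{\tau_u>g(u),\,u\le v\mid \Delta^{\varphi(t)}_{1}=v}$; by the independence of $\tau^{s}$ and $\tau^{b}$ and the observation that the big jump at time $v$ already exceeds $\varphi(t)\ge g(v)$, this conditional probability coincides with $\Pbb{\tau^{s}_{u}>g(u),\,u\le v}$.

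The first step is the pointwise identity
\[
\phi(s) \;=\; e^{-\lambda s}\psi(s)\,+\,\int_0^s \lambda e^{-\lambda v}\psi(v)\,dv,\qquad 0\le s\le t,
\]
obtained by splitting $\Oc_s$ according to whether $\Delta^{\varphi(t)}_{1}>s$ or $\Delta^{\varphi(t)}_{1}=v\in[0,s]$. The crucial point is that the monotonicity \eqref{eq:growthCondition} together with $\varphi(t)\ge g(t)$ forces $\varphi(t)\ge g(u)$ for every $u\le s\le t$, so on $\{\Delta^{\varphi(t)}_{1}=v\}$ the single jump of size $>\varphi(t)$ at time $v$ lifts $\tau$ above the barrier on the whole of $[v,s]$ and $\Oc_s$ collapses to $\{\tau^{s}_{u}>g(u),\,u\le v\}$, of probability $\psi(v)$. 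Integrating this identity against $ds$ on $[0,t]$ and multiplying by $\lambda$ gives an expression for $\lambda\Phi(t)$; subtracting from the identity for $\phi(t)$ produces an exact cancellation of the two single integrals of $\lambda e^{-\lambda v}\psi(v)$, leaving
\[
\phi(t)-\lambda\Phi(t) \;=\; e^{-\lambda t}\psi(t)\,-\,\lambda^{2}\int_0^t\!\!\int_0^s e^{-\lambda v}\psi(v)\,dv\,ds.
\]
Rewriting $\lambda^{2}=4K^{2}/\varphi(t)$ and using $e^{-\lambda t}\psi(t)=\Pbb{\Oc_t;\,\Delta^{\varphi(t)}_{1}>t}$ (a second application of the same independence) identifies the right--hand side with $H(t)$ from \eqref{eq:H(t)}, proving \eqref{eq:differentialEquation}.

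For the solution formula, specialise to $\varphi(t)=g(t)$. Combining $\Phi'(t)=\phi(t)$ with $H(t)=\rho(t)\Phi(t)$ turns \eqref{eq:differentialEquation} into the scalar linear ODE
\[
\Phi'(t) \;=\; \left(\frac{2K}{\sqrt{g(t)}}+\rho(t)\right)\Phi(t),
\]
and separation of variables on $[t_0,t]$ integrates directly to \eqref{eq:solutionODE}. The entire argument is an exact decomposition rather than an estimate, so there is no genuine analytical obstacle; the only book--keeping point is that a \emph{single} threshold $\varphi(t)$ must dominate $g(u)$ uniformly for all $u\le s\le t$, which is precisely what the standing hypothesis $\varphi(t)\ge g(t)\vee 1$ together with the monotonicity of $g$ guarantees.
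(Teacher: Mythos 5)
Your proposal is correct and takes essentially the same approach as the paper: both arguments condition on the first jump exceeding $\varphi(t)$, observe that such a jump puts $\tau$ above the barrier $g$ on the rest of $[0,t]$ so that $\Oc_s$ reduces to an event depending only on the pre-jump truncated subordinator, exploit the exponential law $\Delta^{\varphi(t)}_1\sim\mathrm{Exp}(2K/\sqrt{\varphi(t)})$, and then perform the same algebraic bookkeeping (your ``integrate the pointwise identity for $\phi(s)$ and cancel'' is the same computation as the paper's ``substitute $\Pbb{\Oc^{\varphi(t)}_s}e^{-2Ks/\sqrt{\varphi(t)}}=\Pbb{\Oc_s}-\Pbb{\Delta^{\varphi(t)}_1\le s;\Oc_s}$ and expand again''), with \eqref{eq:solutionODE} then following by solving the resulting first-order linear ODE. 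One small remark: the inequality $\varphi(t)\ge g(u)$ for $u\le t$ requires only that $g$ is increasing (a standing assumption, since $f$ is increasing), not the additional regularity in \eqref{eq:growthCondition} that you cite.
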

  \begin{remark}
 We have no doubt that the probability of events $\Oc_t$ arising from more general subordinators whose \LL measure tail $\PP(x)=\int_{x}^{\infty}\Pi(ds)$, see \cite[Chapter III]{B97} for more information on subordinatores,  behaves as $ x^{-\alpha}L(x)$, as $x\to\infty$, for some $0<\alpha< 2$ and a slowly varying function $L(x)$, will be amenable to such a study and therefore the main results could be extended to a class of general \LL processes. The conditions for a \LL process to possess a local time at zero and the form of the \LL-Khintchine exponent of the inverse local time can be found in \cite[Chapter V]{B97}.
  \end{remark}
  \begin{remark}
  It is even more interesting to understand whether these equations are applicable only for nondecreasing processes like $\tau$ or a suitable modification can be developed for, say \LL processes. Then the problem of general \LL process $\Pbb{X_s>g(s),\,s\leq t}$ could be attacked with such a simple approach as ODE.
  \end{remark}
  \begin{remark}
  It is important to note that despite that \eqref{eq:differentialEquation} is valid with any $\varphi(t)\geq g(t)\vee 1$ it is most beneficial to work with $g(t)$ itself since then the error term represented by $H(t)$ will be minimal.
  \end{remark}
  \begin{remark}
  We note the striking semblance of the derivation of \eqref{eq:differentialEquation} to the classical renewal theory. Perusing the proof it is apparent that the second term can be decomposed ad infinitum in terms of more and more repeated integrals involving $\Phi(s)$ and further error terms thus obtaining a differential equation involving infinitely many derivatives. 
  \end{remark}

    Assume the following mild technical condition
     \begin{equation}\label{eq:mildCondition}
      \liminf\ttinf{t}\frac{g(t)}{t^{2}\ln(t)}=\infty.
     \end{equation}
      
   From now on we work with $\varphi(t)=g(t)\vee 1$. The next result shows that the finiteness of $\Phi(\infty)$ depends on $I(f)$. We recall the usual conditions \eqref{eq:growthCondition} on $f$:
   \begin{equation*}
   (0,\infty) \ni x\mapsto \frac{f(x)}{\sqrt{x}}\, \text{ is decreasing and } \lim_{x\rightarrow \infty}\frac{f(x)}{\sqrt{x}} = 0.
   \end{equation*}
    \begin{lemma}\label{lem:finitenessPhi}
    Let $f$ satisfy the usual conditions in part \ref{subsec:f} and \eqref{eq:mildCondition}. Then $H(t)=o\lbrb{\Phi(t)/\sqrt{g(t)}}$ and hence $\rho(t)=o\lbrb{1/\sqrt{g(t)}}$.
    Therefore 
    \begin{equation}\label{eq:finitenessPhi}
    \Phi(\infty)<\infty\iff \Ebb{f(\tau_1)}<\infty\iff\int_{1}^{\infty}\frac{ds}{\sqrt{g(s)}}<\infty.
    \end{equation}
    \end{lemma}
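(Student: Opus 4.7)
The plan is to use Theorem~\ref{thm:differentialEquation} with $\varphi(t)=g(t)$. Since $\rho(t)=H(t)/\Phi(t)$, establishing $H(t)=o(\Phi(t)/\sqrt{g(t)})$ immediately yields $\rho(t)=o(1/\sqrt{g(t)})$, and the chain of equivalences in \eqref{eq:finitenessPhi} will then follow by inserting this into \eqref{eq:solutionODE}: the contribution $\int_{t_0}^{t}\rho(s)\,ds$ to the exponent is either bounded (when $\int^\infty ds/\sqrt{g(s)}$ converges) or of lower order than $\int_{t_0}^{t}2K/\sqrt{g(s)}\,ds$ (when it diverges). Thus $\Phi(\infty)<\infty\Leftrightarrow\int_{1}^{\infty}ds/\sqrt{g(s)}<\infty$; the change of variable $s=f(u)$ and one integration by parts, using that $f(u)/\sqrt{u}\to 0$ from \eqref{eq:growthCondition} kills the boundary term at $\infty$, converts this into $I(f)<\infty$, and the remaining equivalence with $\Ebb{f(\tau_1)}<\infty$ is \eqref{eq:equivalenceToExpectation}.

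I would split $H(t)=T_1(t)-S_2(t)$, where $T_1(t):=\Pbb{\Oc_t;\Delta_1^{g(t)}>t}$ and $S_2(t)$ is the explicit double integral in \eqref{eq:H(t)}. The key identity, following from the independence of the truncated subordinator $\tilde\tau$ (jumps capped at $g(t)$) from the large-jump Poisson process, is $\tilde\phi(v)e^{-2Kv/\sqrt{g(t)}}=\Pbb{\Oc_v;\Delta_1^{g(t)}>v}\leq\phi(v)$ with $\tilde\phi(v):=\Pbb{\tilde\tau_s>g(s),\,s\leq v}$. Applying this bound inside $S_2$ and invoking Fubini gives
$$|S_2(t)|\leq \frac{4K^2 t}{g(t)}\,\Phi(t)=\frac{4K^2 t}{\sqrt{g(t)}}\cdot\frac{\Phi(t)}{\sqrt{g(t)}},$$
and the prefactor $t/\sqrt{g(t)}=o(1/\sqrt{\ln t})\to 0$ under \eqref{eq:mildCondition} yields $S_2(t)=o(\Phi(t)/\sqrt{g(t)})$ with essentially no work.

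For $T_1(t)$ I would invoke the one-large-jump principle: $T_1$ is the probability of $\Oc_t$ when no single jump of $\tau$ exceeds $g(t)$, and by stable self-similarity this should be negligible relative to $\phi(t)$. A first estimate is
$$T_1(t)\leq\Pbb{\tau_t>g(t);\,\Delta_1^{g(t)}>t}=\Pbb{\tau_t>g(t)}-\bigl(1-e^{-2Kt/\sqrt{g(t)}}\bigr),$$
and expanding the error-function representation in \eqref{eq:lawTau} and the exponential in the small parameter $t/\sqrt{g(t)}$ shows the right-hand side equals $2K^2 t^2/g(t)+O(t^3/g(t)^{3/2})$.

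The main obstacle is upgrading this bound to $T_1(t)=o(\Phi(t)/\sqrt{g(t)})$ under only the mild growth hypothesis \eqref{eq:mildCondition}: the naive estimate $T_1(t)\lesssim t^2/g(t)$ is not smaller than $\Phi(t)/\sqrt{g(t)}$ in the transient regime, where $\Phi(t)$ is merely of constant order. The resolution is to use the full running constraint in $\Oc_t$, not just the terminal $\tau_t>g(t)$: for the truncated subordinator $\tilde\tau$ to stay above the entire curve $g$, the handful of medium-sized jumps (of size close to $g(t)$) that one needs must land in early, tightly-constrained time windows dictated by $g$, contributing an extra factor of $t/\sqrt{g(t)}\to 0$ over the crude count. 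Concretely I would either apply the ODE of Theorem~\ref{thm:differentialEquation} recursively to $\tilde\tau$ with a further truncation level below $g(t)$, or directly partition on the number and times of these medium jumps; combined with the elementary lower bound $\Phi(t)\geq\Phi(1)>0$ (and in the recurrent case the divergence of $\Phi$) this closes the estimate and gives $H(t)=o(\Phi(t)/\sqrt{g(t)})$.
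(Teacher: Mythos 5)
Your treatment of the double-integral piece $S_2$ matches the paper exactly, and your deduction of \eqref{eq:finitenessPhi} from $\rho(t)=o(1/\sqrt{g(t)})$ via \eqref{eq:solutionODE}, followed by the change of variables $s=f(u)$ and one integration by parts using $f(u)/\sqrt{u}\to 0$ from \eqref{eq:growthCondition}, is also what the paper does. You further correctly identify the real difficulty, namely that the crude bound $T_1(t)\lesssim t^2/g(t)$ is not $o(\Phi(t)/\sqrt{g(t)})$ when $\Phi$ is bounded, since \eqref{eq:mildCondition} only forces $t^2/\sqrt{g(t)}\lesssim t/\sqrt{\ln t}\to\infty$.

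However, the mechanism you sketch for closing this gap does not work. You suggest that the medium jumps "must land in early, tightly-constrained time windows," gaining "an extra factor of $t/\sqrt{g(t)}\to 0$." That factor does appear for jumps of size between $g(t)/\ln t$ and $\theta g(t)$ (for a fixed $\theta<1$), because after such a jump the truncated process must still cross a positive fraction of $g(t)$, which is rare. But for jumps of size in $(\theta g(t),g(t))$ there is \emph{no} extra vanishing factor: disintegrating on the time $s$ of the first such jump and bounding the past by $\Pbb{\Oc_s}$ gives precisely $2K(\theta^{-1/2}-1)\Phi(t)/\sqrt{g(t)}$, which is of the \emph{same order} as the target, not smaller. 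The paper's actual decomposition uses three size thresholds $g(t)/\ln t<\theta g(t)<g(t)$: with no jump above $g(t)/\ln t$, Lemma~\ref{lem:Markov} gives super-polynomial decay; with a jump in the middle band but none above $\theta g(t)$, the extra crossing gives your $t/\sqrt{g(t)}$ factor; with a jump above $\theta g(t)$, the best bound is $2K(\theta^{-1/2}-1)\Phi(t)/\sqrt{g(t)}$. The missing and essential step is that one then has
\[
\limsup_{t\to\infty}\frac{T_1(t)\sqrt{g(t)}}{\Phi(t)}\le 2K\lbrb{\theta^{-1/2}-1}\quad\text{for every }\theta<1,
\]
and only letting $\theta\uparrow 1$ yields the $o(\cdot)$. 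Without this $\theta\to 1$ device, the partition you propose delivers $T_1(t)=O(\Phi(t)/\sqrt{g(t)})$ but not the required $o(\Phi(t)/\sqrt{g(t)})$, so the argument as written does not close.
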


 Then equation \eqref{eq:differentialEquation} leads to the following essential result. 
  \begin{theorem}\label{thm:asymptoticFiniteCase}
   For any $f$ satisfying the usual conditions, $I(f)<\infty$ and \eqref{eq:mildCondition} we have that
   \begin{equation}\label{eq:asymptoticFiniteCase}
   \Pbb{\Oc_t}\sim\Pbb{\Oc_t;\,\Delta^{g(t)}_1\leq t} \sim\frac{2K\Phi(\infty)}{\sqrt{g(t)}}, \text{as $t\to\infty$}.
   \end{equation}
 
  \end{theorem}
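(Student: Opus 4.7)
The plan is to combine the linear ODE identity \eqref{eq:differentialEquation} applied with $\varphi(t)=g(t)$ together with the sharp estimate $H(t)=o(\Phi(t)/\sqrt{g(t)})$ supplied by Lemma \ref{lem:finitenessPhi}. Since $I(f)<\infty$, that same lemma (see \eqref{eq:finitenessPhi}) also yields $\Phi(\infty)<\infty$; in particular $\Phi(t)$ is bounded, so $H(t)=o(1/\sqrt{g(t)})$. Rearranging \eqref{eq:differentialEquation} gives
\[\phi(t)=\frac{2K}{\sqrt{g(t)}}\Phi(t)+H(t),\]
and letting $t\to\infty$ with $\Phi(t)\to\Phi(\infty)$ yields at once the second equivalence
\[\phi(t)\sim\frac{2K\Phi(\infty)}{\sqrt{g(t)}}.\]

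For the first equivalence I need to show that $\Pbb{\Oc_t;\,\Delta_1^{g(t)}>t}=o(\phi(t))$, that is, $o(1/\sqrt{g(t)})$. Reading off \eqref{eq:H(t)} this quantity equals
\[H(t)+\frac{4K^2}{g(t)}\int_0^t\!\int_0^s \Pbb{\tau_u>g(u),\,u\leq v\,\big|\,\Delta_1^{g(t)}=v}\,e^{-2Kv/\sqrt{g(t)}}\,dv\,ds,\]
so it suffices to bound the double-integral term. Here I would use the independence of the Poisson process of jumps of size $>g(t)$ from the Poisson process of jumps of size $\leq g(t)$: conditional on $\Delta_1^{g(t)}=v$, the restriction of $\tau$ to $[0,v)$ is distributed as the truncated subordinator $\tau^{(g(t))}$ (with its jumps bounded by $g(t)$), which is pathwise dominated by $\tau$ itself, while at $u=v$ the condition $\tau_v>g(v)$ is automatic since $\tau_v\geq \Delta\tau_v>g(t)\geq g(v)$. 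Consequently the conditional probability is at most $\phi(v)$, the inner integral is at most $\Phi(s)\leq \Phi(\infty)$, and the double integral is at most $4K^2\Phi(\infty)\,t/g(t)$.

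Condition \eqref{eq:mildCondition} gives $g(t)\gg t^2\ln t$, hence $t/g(t)=o(1/\sqrt{g(t)})$, and combined with $H(t)=o(1/\sqrt{g(t)})$ this finishes the proof. I do not anticipate any real obstacle beyond this bookkeeping: Lemma \ref{lem:finitenessPhi} carries the analytic weight, the ODE \eqref{eq:differentialEquation} isolates the leading term $(2K/\sqrt{g(t)})\Phi(t)$, and $I(f)<\infty$ is exactly the hypothesis that makes $\Phi(\infty)$ finite. The only place where one must be slightly careful is the domination step giving the conditional probability $\leq \phi(v)$, where one uses the path-wise inequality $\tau^{(g(t))}\leq\tau$ coming from the \LL--It\^o decomposition of $\tau$ into independent Poisson components.
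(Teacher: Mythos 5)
Your proof is correct and follows essentially the same route as the paper: the ODE \eqref{eq:differentialEquation} with $\varphi=g$, the estimate $H(t)=o(\Phi(t)/\gN{t})$ from Lemma \ref{lem:finitenessPhi} together with $\Phi(\infty)<\infty$ give the second equivalence, and the first follows from the smallness of $\Pbb{\Oc_t;\,\Delta_1^{g(t)}>t}$. Your handling of the latter (writing it as $H(t)$ plus the double-integral and bounding that integral by $4K^2\Phi(\infty)t/g(t)$ via the domination $\Pbb{\Oc_v^{g(t)}}\le\phi(v)$) simply re-derives the ingredient \eqref{eq:H(t)secondTerm} that the paper folds into the proof of Lemma \ref{lem:finitenessPhi} before citing the result via \eqref{eq:equivWithJumps}; the content is the same.
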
 
  \begin{remark}
  Condition \eqref{eq:mildCondition} is expected to hold when $I(f)<\infty$ unless the function is exceptionally bad.
  \end{remark}
  
  The next result considers the case when $\Phi(\infty)=\infty$. 
     We then have that.
  \begin{theorem}\label{thm:asymptoticInFiniteCase}
    For any $f$ satisfying the conditions in part \ref{subsec:f}, $I(f)=\infty$  and \eqref{eq:mildCondition} we have that, as $t\to\infty$,
    \begin{equation}\label{eq:asymptoticInFiniteCase-1}
    \Pbb{\Oc_t}\sim \Pbb{\Oc_t\cap \Delta^{g(t)}_1\leq t}\sim\frac{2K\Phi(t)}{\sqrt{g(t)}}\,;\,\, \ln\lbrb{\Phi(t)}\sim \int_{1}^{t}\frac{2K}{\sqrt{g(s)}}ds,
    \end{equation}
    where we recall that $\Delta^{g(t)}_1=\inf\lbcurlyrbcurly{t>0:\,\tau_t-\tau_{t-}>g(t)}$.
    Furthermore, if for some $t\geq t_0\geq 1$, $\int_{t_0}^{\infty}\labsrabs{\rho(s)}ds<\infty$,  then \eqref{eq:asymptoticInFiniteCase-1} is augmented to
   \begin{equation}\label{eq:asymptoticInFiniteCase-2}
           \Pbb{\Oc_t}\sim\Pbb{\Oc_t\cap \Delta^{g(t)}_1\leq t}\sim \frac{2K\Phi(t)}{\sqrt{g(t)}}\,;\,\, \Phi(t)\sim \Phi(t_0)e^{\int_{t_0}^{\infty}\rho(s)ds} e^{\int_{t_0}^{t}\frac{2K}{\sqrt{g(s)}}ds}.
           \end{equation}
    In particular, this holds when $\liminf\ttinf{t}g(t)/t^{2}\ln^{8/5+\epsilon}(t)=\infty$, i.e. \eqref{eq:conditionGrowth} holds.
  \end{theorem}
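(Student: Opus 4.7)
The plan is to apply the explicit solution \eqref{eq:solutionODE} of Theorem \ref{thm:differentialEquation} with $\varphi(t)=g(t)$, feeding in the refined information on $H(t)$ and $\rho(t)$ from Lemma \ref{lem:finitenessPhi}.

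First, I would rewrite \eqref{eq:differentialEquation} as $\phi(t)=\frac{2K}{\gN{t}}\Phi(t)+H(t)$. Since $H(t)=o\lbrb{\Phi(t)/\gN{t}}$ by Lemma \ref{lem:finitenessPhi}, this immediately produces the equivalence $\phi(t)\sim 2K\Phi(t)/\gN{t}$ claimed in the second position of \eqref{eq:asymptoticInFiniteCase-1}. The one-large-jump equivalence $\Pbb{\Oc_t}\sim\Pbb{\Oc_t\cap \Delta_1^{g(t)}\leq t}$ then follows because the difference between these two probabilities is precisely $\Pbb{\Oc_t;\,\Delta_1^{g(t)}>t}$, i.e.\ the first term in \eqref{eq:H(t)}, which is subsumed into the same $o\lbrb{\Phi(t)/\gN{t}}=o(\phi(t))$ bound arising from the proof of Lemma \ref{lem:finitenessPhi}.

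Second, taking logarithms in \eqref{eq:solutionODE} gives $\ln\Phi(t)=\ln\Phi(t_0)+\int_{t_0}^{t}\frac{2K}{\gN{s}}ds+\int_{t_0}^{t}\rho(s)ds$. Because $I(f)=\infty$, by \eqref{eq:mainConditionG} the deterministic integral $\int_1^\infty \frac{2K}{\gN{s}}ds$ diverges, while $\rho(s)=o(1/\gN{s})$ from Lemma \ref{lem:finitenessPhi} combined with a standard $\varepsilon$--$N$ argument forces $\int_{t_0}^{t}\rho(s)ds=o\lbrb{\int_{t_0}^{t}\frac{1}{\gN{s}}ds}$. Dividing through by the dominant integral yields the logarithmic asymptotic of \eqref{eq:asymptoticInFiniteCase-1}. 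For the refined statement \eqref{eq:asymptoticInFiniteCase-2}, when in addition $\int_{t_0}^\infty|\rho(s)|ds<\infty$, dominated convergence gives $\int_{t_0}^{t}\rho(s)ds\to \int_{t_0}^\infty \rho(s)ds$ as $t\to\infty$, and exponentiating \eqref{eq:solutionODE} concludes.

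The main obstacle is the \emph{in particular} clause: verifying that \eqref{eq:conditionGrowth} implies $\int^\infty|\rho(s)|ds<\infty$. This requires a quantitative sharpening of Lemma \ref{lem:finitenessPhi} to $|H(t)|\lesssim \Phi(t)\chi(t)$ with $\chi$ integrable on $[t_0,\infty)$. The natural attack is to bound each of the two terms in \eqref{eq:H(t)} separately. For the first, the subordinator $\tau$ with all jumps exceeding $g(t)$ excised has finite exponential moments, so Chernoff-type estimates should yield $\Pbb{\tau_s>g(s),s\leq t;\,\Delta_1^{g(t)}>t}\leq e^{-c\,g(t)/t^2}$ up to polylogarithmic corrections; under \eqref{eq:conditionGrowth} one has $g(t)/t^2\gtrsim \ln^{8/5+\epsilon}(t)$ so this decays much faster than any negative power of $\ln t$. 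For the second term, a careful split of the inner time integral at a suitable threshold, combined with the rough upper bound $\Pbb{\tau_u>g(u),\,u\leq v}\lesssim 1\wedge C\Phi(v)/\gN{v}$ coming from \eqref{eq:asymptoticInFiniteCase-1} itself applied on shorter horizons, together with the exponential weight $e^{-2Kv/\gN{t}}$, should deliver an excess over $2K\Phi(t)/\gN{t}$ of order $\Phi(t)/(\gN{t}\cdot\ln^{1+\delta}(t))$ for some $\delta>0$. Since $1/\gN{t}\lesssim 1/(t\ln^{4/5+\epsilon/2}(t))$, the resulting $|\rho(t)|\lesssim 1/(t\ln^{1+\delta'}(t))$ is integrable, and the exponent $4/5$ in the introduction is exactly the threshold at which this scheme succeeds.
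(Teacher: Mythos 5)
Your treatment of \eqref{eq:asymptoticInFiniteCase-1} and of the \emph{conditional} statement in \eqref{eq:asymptoticInFiniteCase-2} matches the paper exactly: both asymptotics in the first display are read off from the ODE \eqref{eq:differentialEquation}, the one-jump equivalence from \eqref{eq:equivWithJumps}, and the $\ln\Phi$ asymptotic from \eqref{eq:solutionODE} together with $\rho=o(1/\gN{s})$, using $I(f)=\infty$; and given $\int^\infty|\rho|<\infty$ the refined form \eqref{eq:asymptoticInFiniteCase-2} is immediate. So the only thing separating you from the result is the \emph{in particular} clause, namely the verification that \eqref{eq:conditionGrowth} implies $\int^\infty|\rho(s)|\,ds<\infty$ --- and that is where there is a genuine gap.

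The paper proves this via Lemma \ref{lem:uniformIntegrability} with $h=y=0$, and the hard part is bounding the \emph{first} term of $H(t)$ in \eqref{eq:H(t)}, i.e.\ $\Pbb{\Oc_t;\Delta_1^{g(t)}>t}\leq\Pbb{\tau^{g(t)}_t>g(t)}$; the double-integral term is controlled by the crude bound $4K^2t\Phi(t)/g(t)$ which under \eqref{eq:conditionGrowth} is already integrable-fast, so no delicate split of the inner time integral is needed there. Your Chernoff estimate $\Pbb{\tau_s>g(s),\,s\leq t;\,\Delta_1^{g(t)}>t}\leq e^{-c\,g(t)/t^2}$ is not attainable from a single exponential Markov inequality at the $g(t)$-truncation scale. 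The truncated subordinator $\tau^{g(t)}$ can still make one jump of size nearly $g(t)$; hence at $\lambda=n/g(t)$ one has $t\Psi_{g(t)}(\lambda)\asymp Kt\,e^{n}/\sqrt{g(t)}$, which forces $n\lesssim\ln\lbrb{\sqrt{g(t)}/t}\lesssim\ln\ln t$ under \eqref{eq:conditionGrowth} and yields only a polylogarithmic, not a super-polynomial, bound. To obtain a super-polynomial tail the paper lowers the truncation to $g_1(t)=g(t)/\ln(t)$ (Lemma \ref{lem:Markov} with $\delta=1$ giving a $t^{-n}$ rate), and then must separately handle the events in which one or more jumps of intermediate size between $g_1(t)$ and $g(t)$ occur. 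This is the entire content of Cases 1, 2A, 2B, 2Ca, 2Cb, 2Cc, with additional threshold levels $\varepsilon w(t)$ and $(1-\ln^{-\gamma}(t))w(t)$ and a tuned parameter $\gamma=1/5$. Your claimed excess of order $\Phi(t)/\lbrb{\gN{t}\ln^{1+\delta}(t)}$ is also more optimistic than what the paper's worst case (Case 2Ca, \eqref{eq:Case 2Ca}) actually delivers, which is only $\Phi(t)/\lbrb{\gN{t}\ln^{1/5}(t)}$; the scheme survives only because $1/\gN{t}\lesssim 1/(t\ln^{4/5+\epsilon/2}(t))$ contributes the remaining $\ln^{4/5}$, so that $1/5+4/5$ is exactly the integrability threshold. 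As written your plan hand-waves past the multi-scale decomposition that is the technical core of the whole section, and the specific bounds you quote cannot be obtained by the mechanism you describe.
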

  \begin{remark}
   Note the strong form of the asymptotic \eqref{eq:asymptoticInFiniteCase-2} is essential in the proof of recurrence. As mentioned in Section \ref{subsec:strategyProof} we need to study in a uniform way a family of equations for a generalized form of $\Phi$.
  \end{remark}

  We start by proving the results of this section as they are instrumental in our further analysis.
  
\section{Proof of the results in Section \ref{sec:differentialEqnandEstimates}}
In this section and later we will use the following notation. First we shall attach a superscript to $\Oc_t$, $\tau$, etc. to denote that jumps until given time above certain level are conditioned not have occurred. For example $\Oc^{g(t)}_s=\{\tau^{g(t)}_v>g(v),v\leq s\}$ means that $\Oc_s$ holds for the subordinator $\tau^{g(t)}$ which is constructed from $\tau$ by conditioning that jumps larger than $g(t)$ do not occur. The L\'{e}vy-Khintchine exponent of $\tau_1^{g(t)}$ can be represented by
\begin{align}\label{eq:LevyKhintchineTruncated}
\nonumber \Psi_{g(t)}(\lambda)&=\ln\lbrb{\Ebb{e^{\lambda \tau^{g(t)}_1}}}=K\int_{0}^{g(t)}\lbrb{e^{\lambda s}-1}\frac{ds}{s^{3/2}}=\\
&=K\sqrt{\lambda}\int_{0}^{\lambda g(t)}\lbrb{e^{\lambda s}-1}\frac{ds}{s^{3/2}}, \text{ $\forall\lambda>0,$}
\end{align}
where we note that only the \LL measure $\Pi(ds)=Kds/s^{3/2}$ has been truncated, see \eqref{eq:LevyKhintchine} and that $\tau_1^{g(t)}$ in fact has all exponential moments thus $\Psi_{g(t)}(.)$ is analytic on the complex plane. The analyticity of  $\Psi_{g(t)}(.)$ can be directly read off from the first integral formula in \eqref{eq:LevyKhintchineTruncated} by a power series expansion of the exponential. 

We also use the notation  $\Delta^{a}_k=\inf\{s>\Delta^a_{k-1}:\,\tau_s-\tau_{s-}>a\}$, $\Delta^a_0=0$ to denote the time of the $k^{th}$ jump of $\tau$ larger than $a$. Note that $\Delta^a_1\sim Exp(2K/\sqrt{a})$ where we recall that $\PP(x)=\int_{x}^{\infty}\Pi(ds)=2K/\sqrt{x}$, for all $x>0$, is the intensity measure of the jumps larger than $x$, see \cite{B97} for more information on \LL processes.

We are now ready to start off with our proof.
\begin{proof}[Proof of Theorem \ref{thm:differentialEquation}]

Note that since $\varphi(t)\geq g(t)\vee 1$ we have upon disintegration the values of $\Delta^{\varphi(t)}_1\sim Exp(2K/\sqrt{\varphi(t)})$
\begin{align}\label{eq:equivWithJumps}
&\Pbb{\Oc_t}=\int_{0}^{t}\Pbb{\Oc_t,\Delta^{\varphi(t)}_1\in ds}+\Pbb{\,\Delta^{\varphi(t)}_1>t;\,\Oc_t}=\\
\nonumber &\frac{2K}{\sqrt{\varphi(t)}}\int_{0}^{t}\Pbb{\Oc^{\varphi(t)}_s}e^{-\frac{2Ks}{\sqrt{\varphi(t)}}}ds+\Pbb{\tau_s>g(s),s\leq t;\,\Delta^{\varphi(t)}_1>t}.
\end{align}
Indeed we have that
\[\Pbb{\Oc_t;\,\Delta^{\varphi(t)}_1\in ds}=\Pbb{\Oc_{\Delta^{\varphi(t)}_1};\Delta^{\varphi(t)}_1\in ds}\]
which upon conditioning on $\{\Delta^{\varphi(t)}_1=s\}$ confirms our equation. Next, note that since $\Pbb{\Delta^{\varphi(t)}_1>  s}=e^{-\frac{2Ks}{\sqrt{\varphi(t)}}}$ we obtain that
\[\Pbb{\Oc_s}=\Pbb{\Delta^{\varphi(t)}_1\leq s;\,\Oc_s}+\Pbb{\Delta^{\varphi(t)}_1> s;\,\Oc_s}=\Pbb{\Delta^{\varphi(t)}_1\leq s;\,\Oc_s}+\Pbb{\Oc^{\varphi(t)}_s}e^{-\frac{2Ks}{\sqrt{\varphi(t)}}}.\]
Substituting back for $\Pbb{\Oc^{\varphi(t)}_s}e^{-\frac{2Ks}{\sqrt{\varphi(t)}}}$ we get that
\begin{align*}
&\phi(t)=\Pbb{\Oc_t}=\\
&\frac{2K}{\sqrt{\varphi(t)}}\int_{0}^{t}\Pbb{\Oc_s}ds+\Pbb{\tau_s>g(s),s\leq t;\,\Delta^{\varphi(t)}_1>t}-\frac{2K}{\sqrt{\varphi(t)}}\int_{0}^{t}\Pbb{\Delta^{\varphi(t)}_1\leq s;\,\Oc_s}ds=\\
&\frac{2K}{\sqrt{\varphi(t)}}\Phi(t)+\Pbb{\tau_s>g(s),s\leq t;\,\Delta^{\varphi(t)}_1>t}-\frac{4K^2}{\varphi(t)}\int_{0}^{t}\int_{0}^{s}\Pbb{\Oc^{\varphi(t)}_v}e^{-\frac{2Kv}{\sqrt{\varphi(t)}}}dvds
\end{align*}
and recalling the definition $H(t)$ we conclude \eqref{eq:differentialEquation}. Finally, \eqref{eq:solutionODE} comes as the solution of a classical first order linear ODE.
\end{proof}

\begin{proof}[Proof of Lemma \ref{lem:finitenessPhi}]
We estimate the terms in $H(t)$, see \eqref{eq:H(t)}. Note that $H(t)$ can be rewritten for $t>1$ with $g(t)\vee 1=g(t)$
\[H(t)=e^{-\frac{2Kt}{\sqrt{g(t)}}}\Pbb{\Oc^{g(t)}_t}-\frac{4K^2}{g(t)}\int_{0}^{t}\int_{0}^{s}\Pbb{\Oc^{g(t)}_v}e^{-\frac{2Kv}{\sqrt{g(t)}}}dvds.\]
Estimating $\Pbb{\Oc^{g(t)}_t}\leq \Pbb{\Oc_t}$, $e^{-\frac{2Kv}{\sqrt{g(t)}}}\leq 1$ and using the fact that $\Phi(t)$ is non-decreasing we arrive at
\begin{align}\label{eq:H(t)secondTerm}
\nonumber\frac{4K^2}{g(t)}\int_{0}^{t}\int_{0}^{s}\Pbb{\Oc^{g(t)}_v}e^{-\frac{2Kv}{\sqrt{g(t)}}}dvds&\leq \frac{4K^2t}{g(t)}\Phi(t)=\frac{4K^2t}{\gN{t}}\frac{\Phi(t)}{\gN{t}}\\
&=o\lbrb{\frac{\Phi(t)}{\gN{t}}}.
\end{align}
For the last line we use \eqref{eq:mildCondition}. Therefore, we need to discuss the first term of $H(t)$ only.

Denote by $g_1(t):=g(t)/\ln(t)$, for $t>2$. Distinguishing upon the times of $\Delta^{g_1(t)}_1,\Delta^{\theta g(t)}_1 $,  for some $\theta<1$, we get
\begin{align*}
&\Pbb{\Oc^{g(t)}_t}\leq \Pbb{\Oc^{g(t)}_t;\,\Delta^{\theta g(t)}_1\leq t}+\Pbb{\Oc^{g(t)}_t;\,\Delta^{g_1(t)}_1\leq t,\Delta^{\theta g(t)}_1>t}+\Pbb{\Oc^{g(t)}_t;\,\Delta^{g_1(t)}_1>t}.
\end{align*}
Note that since we work with the truncated subordinator and the corresponding event $\Oc^{g(t)}_t$, $\Delta^{a}_1\sim Exp\lbrb{2K/\sqrt{a}-2K/\sqrt{g(t)}}$, for $a<g(t)$. Note that always one can in a crude manner estimate the derivative
\[\Pbb{\Delta^{a}_1\in ds}\leq \lbrb{\frac{2K}{\sqrt{a}}-\frac{2K}{\sqrt{g(t)}}}ds\]
something will use extensively but implicitly below.

We note that from Lemma \ref{lem:Markov} -- which is stated and proven below -- we have that for any $c>0$ and fixed $n\in\Nb^+$ with $\delta=1$ the following inequality holds
\[\Pbb{\tau^{g_1(t)}_t>cg(t)}\leq e^{K\sqrt{nc^{-1}}\frac{t\ln^{1/2}(t)}{\sqrt{g(t)}}\int_{0}^{n/c}\lbrb{e^{s}-1}\frac{ds}{s^{3/2}}}t^{-n}\lesssim t^{-n}\]
since $\liminf\ttinf{t}g(t)/t^{2}\ln(t)=\infty$, i.e. \eqref{eq:mildCondition} holds. Therefore since $\Phi(t)=\int_{0}^{t}\Pbb{\Oc_s}ds$ is non-decreasing
\begin{align}\label{eq:Lemma o()-1}
\nonumber&\Pbb{\Oc^{g(t)}_t;\,\Delta^{g_1(t)}_1>t}=\Pbb{\Oc^{g_1(t)}_t}\Pbb{\Delta^{g_1(t)}_1>t}\leq\Pbb{\Oc^{g_1(t)}_t}\leq\\
& \Pbb{\tau^{g_1(t)}_t>g(t)}\lesssim \frac{1}{t^n}\lesssim
\frac{\int_{0}^{t}\Pbb{\Oc_s}ds}{t^{n-1}}=\frac{\Phi(t)}{t^{n-1}}.
\end{align}
Similarly disintegrating the time of arrival of $\Delta^{g_1(t)}_1$ and using that the maximal jump does not exceed $\theta g(t)$ we derive that
\begin{align*}
\nonumber& \Pbb{\Oc^{g(t)}_t;\,\Delta^{g_1(t)}_1\leq t,\Delta^{\theta g(t)}_1>t}=\int_{s=0}^{t}\Pbb{\Oc^{g(t)}_t;\Delta^{g_1(t)}_1\in ds,\Delta^{\theta g(t)}_1>t}=\\
\nonumber&\int_{s=0}^{t}\Pbb{\Oc^{g(t)}_t;\Delta^{\theta g(t)}_1>t|\Delta^{g_1(t)}_1=s}\Pbb{\Delta^{g_1(t)}_1\in ds}\leq\\ 
\nonumber&\int_{s=0}^{t}\Pbb{\Oc^{g_1(t)}_s;\tau_t^{g(t)}>g(t);\Delta^{\theta g(t)}_1>t|\Delta^{g_1(t)}_1=s}\Pbb{\Delta^{g_1(t)}_1\in ds}
\end{align*}
Estimating the density $\Pbb{\Delta^{g_1(t)}_1\in ds}$ and using that conditionally on $\lbcurlyrbcurly{\Delta^{g_1(t)}_1=s}$  the process $\tau^{g(t)}$ runs as $\tau^{g_1(t)}$ until time $s$ at which time it makes a jump $\tau_s-\tau_{s-}\in \lbrb{g_1(t),\theta g(t)}$ because of $\lbcurlyrbcurly{\Delta^{\theta g(t)}_1>t}$. Then
\[\tau^{g(t)}_t\stackrel{d}=\tau^{g_1(t)}_{s-}+\tau^{g(t)}_s-\tau^{g(t)}_{s-}+\tilde{\tau}^{g(t)}_{t-s},\]
where $\tilde{\tau}^{g(t)}$ is a copy of $\tau^{g(t)}$ independent of $\tau^{g_1(t)}_{s-}$. Therefore continuing the estimates above we get that
\begin{align}\label{eq:Lemma o()-2}
\nonumber& \Pbb{\Oc^{g(t)}_t;\,\Delta^{g_1(t)}_1\leq t,\Delta^{\theta g(t)}_1>t}\leq\\
\nonumber&\frac{2K\ln^{1/2}(t)}{\sqrt{g(t)}}\int_{0}^{t}\Pbb{\Oc^{g_1(t)}_s;\tilde{\tau}^{g(t)}_{t-s}+\tau^{g_1(t)}_{s-}>(1-\theta) g(t)}ds=\\
\nonumber&\frac{2K\ln^{1/2}(t)}{\sqrt{g(t)}}\int_{0}^{t}\Pbb{\Oc^{g_1(t)}_s;\tau^{g_1(t)}_{s-}>(1-\theta) g(t)/2,\tilde{\tau}^{g(t)}_{t-s}>(1-\theta) g(t)-\tau^{g_1(t)}_{s-}}ds+\\
\nonumber&\frac{2K\ln^{1/2}(t)}{\sqrt{g(t)}}\int_{0}^{t}\Pbb{\Oc^{g_1(t)}_s;\tau^{g_1(t)}_{s-}\leq (1-\theta) g(t)/2,\tilde{\tau}^{g(t)}_{t-s}>(1-\theta) g(t)-\tau^{g_1(t)}_{s-}}ds\leq\\
\nonumber &\frac{2K\ln^{1/2}(t)}{\sqrt{g(t)}}\int_{0}^{t}\Pbb{\Oc^{g_1(t)}_s;\tau^{g_1(t)}_{s-}>(1-\theta) g(t)/2}ds+\\
&\frac{2K\ln^{1/2}(t)}{\sqrt{g(t)}}\int_{0}^{t}\Pbb{\Oc^{g_1(t)}_s;\tilde{\tau}^{g(t)}_{t-s}>(1-\theta) g(t)/2}ds.
\end{align}
We estimate the last two terms in \eqref{eq:Lemma o()-2}. Due to the independence of $\tilde{\tau}^{g(t)}_{t-s}$ of $\Oc^{g_1(t)}_s$, the fact that $\tilde\tau^{g(t)}$ is a copy of $\tau^{g(t)}$, the fact that $\tau$ is a stable subordinator with index $1/2$ and \eqref{eq:lawTau} which describes the law of $\tau_t$ we get the trivial upper bounds
\begin{align*}
&\frac{2K\ln^{1/2}(t)}{\sqrt{g(t)}}\int_{0}^{t}\Pbb{\Oc^{g_1(t)}_s;\tilde{\tau}^{g(t)}_{t-s}>(1-\theta) g(t)/2}ds\leq\\ &\frac{2K\ln^{1/2}(t)}{\sqrt{g(t)}}\Pbb{\tau_t\geq (1-\theta)g(t)/2}\int_{0}^{t}\Pbb{\Oc^{g_1(t)}_s}ds\leq\frac{Ct\ln^{1/2}(t)}{\sqrt{(1-\theta)/2}g(t)}\Phi(t), 
\end{align*}
for some absolute constant $C>0$. The other term we estimate as before in \eqref{eq:Lemma o()-1}
\begin{align*}
&\frac{2K\ln^{1/2}(t)}{\sqrt{g(t)}}\int_{0}^{t}\Pbb{\Oc^{g_1(t)}_s;\tau^{g_1(t)}_{s-}>(1-\theta) g(t)/2}ds\leq\\ &\frac{2Kt\ln^{1/2}(t)}{\sqrt{g(t)}}\Pbb{\tau^{g_1(t)}_t>(1-\theta) g(t)/2}\leq C(\theta)\frac{\Phi(t)}{t^{n-1}}.
\end{align*}
Therefore collecting the terms above we get
\begin{equation}\label{eq:afterError1}
\Pbb{\Oc^{g(t)}_t;\,\Delta^{g_1(t)}_1\leq t,\Delta^{\theta g(t)}_1>t}\leq C_1(\theta)o(1)\frac{\Phi(t)}{\sqrt{g(t)}},
\end{equation}
since \eqref{eq:mildCondition} holds and $n$ can be chosen as big as we wish.

Finally consider the case when $\Delta^{\theta g(t)}_1\leq t$. Then  estimating \[\Pbb{\Delta^{\theta g(t)}_1\in ds}=\frac{2K}{\gN{t}}\lbrb{\frac{1}{\sqrt{\theta}}-1}e^{-\lbrb{\frac{2K}{\gN{t}}\lbrb{\frac{1}{\sqrt{\theta}}-1}s}}ds\leq \frac{2K}{\gN{t}}\lbrb{\frac{1}{\sqrt{\theta}}-1}ds\]
we get that
\begin{align*}
&\Pbb{\Oc^{g(t)}_t,\Delta^{\theta g(t)}_1\leq t}=\int_{0}^{t}\Pbb{\Oc^{g(t)}_t,\Delta^{\theta g(t)}_1\in ds}\leq\\
&\frac{2K}{\gN{t}}\lbrb{\frac{1}{\sqrt{\theta}}-1}\int_{0}^{t}\Pbb{\Oc^{g(t)}_s}ds\leq \frac{2K}{\gN{t}}\lbrb{\frac{1}{\sqrt{\theta}}-1}\Phi(t).
\end{align*}
Collecting this term, \eqref{eq:afterError1}, \eqref{eq:Lemma o()-1} we get that
\[\limsup\ttinf{t}\frac{\Pbb{\Oc^{g(t)}_t}\gN{t}}{2K\Phi(t)}\leq \lbrb{\frac{1}{\sqrt{\theta}}-1}\]

Setting $\theta\to 1$ we conclude the statement that $H(t)=o\lbrb{\frac{\Phi(t)}{\gN{t}}}$ and $\rho(t)=o\lbrb{\frac{1}{\gN{t}}}$. Then this allows together with 
\[\Phi(t)=\Phi(1)e^{\int_{1}^{t}\frac{2K}{\gN{s}}ds+\int_{1}^{t}\rho(s)ds}\]
to deduce \eqref{eq:finitenessPhi}.
\end{proof}

\begin{proof}[Proof of Theorem \ref{thm:asymptoticFiniteCase} and \eqref{eq:asymptoticInFiniteCase-1} of Theorem \ref{thm:asymptoticInFiniteCase}]
The claims that $\Pbb{\Oc_t}\sim \frac{2K\Phi(t)}{\gN{t}}$ in both theorems follows from the differential equation \eqref{eq:differentialEquation} and using the fact that
is immediate from Lemma \ref{lem:finitenessPhi} which establishes that $H(t)=o\lbrb{\frac{\Phi(t)}{\gN{t}}}$.  The behaviour of $\ln\lbrb{\Phi(t)}$ in \eqref{eq:asymptoticInFiniteCase-1} follows from \eqref{eq:solutionODE} and the fact that Lemma \ref{lem:finitenessPhi} shows that $\rho(t)=o\lbrb{\frac{1}{\gN{t}}}$ .

The claim $\Pbb{\Oc_t}\sim \Pbb{\Delta_1\leq t;\,\Oc_t}$ follows from \eqref{eq:equivWithJumps} as the second term is proved to be $o\lbrb{\frac{\Phi(t)}{\gN{t}}}$ and therefore $o\lbrb{\Pbb{\Oc_t}}$.
\end{proof}

\begin{proof}[Proof of \eqref{eq:asymptoticInFiniteCase-2} of Theorem \ref{thm:asymptoticInFiniteCase}]
The proof is immediate from Lemma \ref{lem:uniformIntegrability} below with $h=y=0$ which is the classical case.
\end{proof}

The next lemma proves a stronger claim than \eqref{eq:solutionODE} of Theorem \ref{thm:asymptoticInFiniteCase} as it provides some form of uniformity. For any $y\geq 0$ and $h\geq 0$ define $g^h_y(s):=\lbrb{g(s+h)-y}\vee 1$ and $g_{y,h}(s):=\lbrb{g(s+h)-y}$ and define $\Oc_t(h,y)=\{\tau_s>g_{y,h}(s),s\leq t\}$ and 
\[\Oc^{g^h_y}_t:=\Oc^{g^h_y}_t(h,y)=\{\tau^{g^h_y(t)}_s>g_{y,h}(s),s\leq t\}.\] 
We note that
\begin{equation}\label{eq:relationO}
  \Pbb{\Oc_t(h,y);\Delta^{g^h_y(t)}_1>t}\leq \Pbb{\Oc^{g^h_y}_t}\leq \Pbb{\Oc_t(h,y)}.
\end{equation}
We denote as well
\[\Phi^h_y(t)=\int_{0}^{t}\Pbb{\Oc_t(h,y)}ds;\,\,\phi^{h}_y(t)=\lbrb{\Phi^h_y(t)}'\]
and consider the more general differential equation with $H^h_y$ defined as in \eqref{eq:H(t)} but with the functions $g^h_y$ and $g_{y,h}$
\begin{equation}\label{eq:generalODE}
\phi^{h}_y(t)-\frac{2K}{\sqrt{g^h_y(t)}}\Phi^h_y(t)=H^h_y(t).
\end{equation}

Finally denote by 
\begin{equation}\label{eq:rho_h_y}
\rho^h_y(t):=\frac{H^h_y(t)}{\Phi^h_y(t)}.
\end{equation}

We have the following claim
\begin{lemma}\label{lem:uniformIntegrability}
Let $f$ satisfy the usual conditions \eqref{eq:growthCondition} and $\liminf\ttinf{t}g(t)/t^2\ln^{8/5+\epsilon}(t)=\infty$ for some $\epsilon>0$, i.e. \eqref{eq:conditionGrowth} holds. Then, for any $A>3$, $h> 0$,  $y>g(h)\vee 0$ and any $t>f(Ay)\vee t(A)$, where $t(A)$ satisfies the equation $g(t(A))=1+2A^{-1}$ we have the following bounds 
\begin{equation}\label{eq:uniformHestimate}
\rho^h_y(t)\leq C(A)\frac{1}{t\ln^{1+\frac{\epsilon}{2}}(t)}\lbrb{1+\frac{1}{\lbrb{f(y)-h}}}.
\end{equation}
There exists $u(t)\to 0$, as $t\to\infty$ such that for all $h,y$ with constraints above we have that
\begin{equation}\label{eq:uniformComparisonToG}
\rho^h_y(t)\leq \frac{u(t)}{\sqrt{g(t)}}\lbrb{1+\frac{1}{\lbrb{f(y)-h}}}.
\end{equation}
The last estimate \eqref{eq:uniformComparisonToG} holds with $\sqrt{g^h_y(t)}$ instead of $\sqrt{g(t)}$.
The estimates \eqref{eq:uniformHestimate} and \eqref{eq:uniformComparisonToG} also hold for $h=y=0$ without the factor $1/\lbrb{f(y)-h}$.
\end{lemma}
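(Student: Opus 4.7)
The plan is to repeat the jump-decomposition argument from the proof of Lemma \ref{lem:finitenessPhi}, but (i) make every constant and rate explicit, (ii) carry the parameters $(h,y)$ through without losing uniformity, and (iii) replace the ``send $\theta\to 1$'' limiting step with a $t$-dependent choice $\theta=\theta(t)$. Two observations organize the whole argument. First, the constraints $t>f(Ay)\vee t(A)$ unfold as $g(t)>Ay$ and $g(t)>1+2/A$, so $g^h_y(t)\geq g(t)-y\geq (1-1/A)g(t)>1$ and $\sqrt{g^h_y(t)}\asymp_A \sqrt{g(t)}$; this lets us treat $g^h_y$ as $g$ up to $A$-dependent constants. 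Second, for all $s\leq f(y)-h$ the function $g_{y,h}(s)$ is nonpositive, so $\Pbb{\Oc_s(h,y)}=1$ and
\[
\Phi^h_y(t)\ \geq\ (f(y)-h)\wedge t\ =\ f(y)-h,
\]
because $t>f(Ay)\geq f(y)\geq f(y)-h$. This lower bound on $\Phi^h_y$ is exactly what turns an absolute tail bound on $H^h_y$ into the $(1+1/(f(y)-h))$ factor in \eqref{eq:uniformHestimate}.

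Starting from \eqref{eq:generalODE} I split $H^h_y(t)$ into its two pieces from \eqref{eq:H(t)}. The double integral is bounded by $(4K^2 t/g^h_y(t))\Phi^h_y(t)$, which under \eqref{eq:conditionGrowth} is at most $C(A)\Phi^h_y(t)/(t\ln^{8/5+\epsilon}(t))$, well below the target rate $1/(t\ln^{1+\epsilon/2}(t))$. The boundary term $\Pbb{\Oc_t(h,y);\Delta^{g^h_y(t)}_1>t}\leq \Pbb{\Oc^{g^h_y}_t}$ is handled by the three-region decomposition used for $\Pbb{\Oc^{g(t)}_t}$ in Lemma \ref{lem:finitenessPhi}: introduce $g^h_{y,1}(t):=g^h_y(t)/\ln(t)$ and a cutoff $\theta g^h_y(t)$, and split into $\{\Delta^{\theta g^h_y(t)}_1\leq t\}$, $\{\Delta^{g^h_{y,1}(t)}_1\leq t<\Delta^{\theta g^h_y(t)}_1\}$, and $\{\Delta^{g^h_{y,1}(t)}_1>t\}$. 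The first region contributes at most $2K\bigl(1/\sqrt{\theta}-1\bigr)\Phi^h_y(t)/\sqrt{g^h_y(t)}$; the other two are controlled via Lemma \ref{lem:Markov} applied to the truncated subordinator $\tau^{g^h_{y,1}(t)}$.

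The quantitative move is to take $\theta=\theta(t)$ with $1/\sqrt{\theta(t)}-1 = c(A)\ln^{-1/5}(t)$ for a suitable $c(A)>0$, so $\theta(t)\to 1$. By \eqref{eq:conditionGrowth} this is no larger than $\sqrt{g^h_y(t)}/(t\ln^{1+\epsilon/2}(t))$, so the first-region bound becomes exactly $C(A)\Phi^h_y(t)/(t\ln^{1+\epsilon/2}(t))$, matching the target rate. The delicate point is that in the intermediate region one now needs $\Pbb{\tau^{g^h_{y,1}(t)}_t>\tfrac{1}{2}(1-\theta(t))g^h_y(t)}$ to be polynomially small in $t$ even though $1-\theta(t)\asymp\ln^{-1/5}(t)$ shrinks. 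Lemma \ref{lem:Markov} with integer parameter $n$ produces a bound of shape $\exp\{C\sqrt{n(1-\theta(t))^{-1}}\,t\ln^{1/2}(t)/\sqrt{g(t)}\}\cdot t^{-n}$; under \eqref{eq:conditionGrowth}, $t\ln^{1/2}(t)/\sqrt{g(t)}\leq \ln^{-3/10-\epsilon/2}(t)$ and $\sqrt{(1-\theta(t))^{-1}}\asymp\ln^{1/10}(t)$, so the product inside the exponential is $\leq C\ln^{-1/5-\epsilon/2}(t)\to 0$ and the exponential prefactor stays bounded; the intermediate contribution is thus $\leq t^{-n_0}$ for any fixed $n_0$. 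The threshold $8/5+\epsilon$ in \eqref{eq:conditionGrowth} is precisely calibrated so that this balancing works for the sharper target rate $1/(t\ln^{1+\epsilon/2}(t))$, compared to the bare $o(1/\sqrt{g(t)})$ of Lemma \ref{lem:finitenessPhi}.

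Collecting everything, $H^h_y(t)\leq C(A)\Phi^h_y(t)/(t\ln^{1+\epsilon/2}(t))+C(A)\,t^{-n_0}$; dividing by $\Phi^h_y(t)\geq f(y)-h$ yields \eqref{eq:uniformHestimate}. Inequality \eqref{eq:uniformComparisonToG} follows via $1/(t\ln^{1+\epsilon/2}(t))\leq \ln^{-1/5}(t)/\sqrt{g(t)}$, with $u(t):=C(A)\ln^{-1/5}(t)\to 0$; the variant with $\sqrt{g^h_y(t)}$ replacing $\sqrt{g(t)}$ is immediate from $\sqrt{g^h_y(t)}\asymp_A\sqrt{g(t)}$. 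For the case $h=y=0$ the factor $1/(f(y)-h)$ is not needed since under $I(f)=\infty$ we have $\Phi(t)\to\infty$, which absorbs any additive $t^{-n_0}$ error directly. The main obstacle throughout is the simultaneous calibration of $\theta(t)$ against both the Markov-type bound of Lemma \ref{lem:Markov} and the leading-order error; this is precisely why the exponent on $\ln(t)$ in \eqref{eq:conditionGrowth} must exceed $8/5$.
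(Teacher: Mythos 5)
The high-level strategy you propose --- recycle the three-region decomposition of Lemma \ref{lem:finitenessPhi} and replace the fixed $\theta$ by a $t$-dependent $\theta(t)\uparrow 1$ --- sounds natural, but the crucial step fails: your application of Lemma \ref{lem:Markov} with $c=(1-\theta(t))/2\to 0$ is invalid as stated. Lemma \ref{lem:Markov} gives
\[
\Pbb{\tau^{a_\delta}_t>ca}\ \leq\ e^{\frac{tK\sqrt{n}\ln^{\delta/2}(a)}{\sqrt{ca}}\int_{0}^{n/c}\lbrb{e^s-1}\frac{ds}{s^{3/2}}}\,e^{-n\ln^{\delta}(a)},
\]
and you silently absorb the factor $\int_{0}^{n/c}(e^s-1)s^{-3/2}\,ds$ into a constant $C$. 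That is only legitimate when $c$ is bounded away from $0$. When $c\asymp\ln^{-1/5}(t)$ this integral behaves like $(n/c)^{-3/2}e^{n/c}\asymp e^{n\ln^{1/5}(t)}$, which grows super-polylogarithmically; it overwhelms the other factors $\sqrt{n/c}\,t\ln^{1/2}(t)/\sqrt{g(t)}\asymp\ln^{-1/5-\epsilon/2}(t)$, so the exponent tends to $+\infty$ rather than $0$ and the ``exponential prefactor stays bounded'' claim is simply false. The same difficulty is built into Lemma \ref{lem:Markov} itself: that bound is derived by choosing the Chernoff parameter $\lambda=(n/c)a_\delta^{-1}$, which forces the upper limit of integration to be $\lambda a_\delta=n/c$; a shrinking $c$ inevitably blows up the moment-generating-function contribution.

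The paper circumvents exactly this obstruction, and in a way your write-up does not anticipate: it keeps the ``small-jump'' cutoff $\varepsilon<1/4$ fixed, sets the ``near-top'' cutoff as $p^*(t)=1-\ln^{-\gamma}(t)$ with $\gamma=1/5$, and --- crucially --- in Case 2Cc introduces an additional, finer truncation level $w_\delta(t)=w(t)/\ln^{\delta}(t)$ with $\delta=1+\gamma$. When it has to bound $\Pbb{\tau^{w_\delta(t)}_t>p(t)w(t)/2}$, the Chernoff parameter is taken to be $\lambda=2n\,w_\delta^{-1}(t)$, so that $\lambda w_\delta(t)=2n$ is \emph{fixed}: the integral $\int_{0}^{2n}(e^{s\cdot 2n/w_\delta(t)}-1)s^{-3/2}ds$ stays bounded, while $\lambda\cdot p(t)w(t)/2=n\,p(t)\ln^{\delta}(t)=n\ln(t)$ still yields the $t^{-n}$ decay precisely because $\delta-\gamma=1$. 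That coupling of the truncation exponent $\delta$ to the target exponent $\gamma$ is the missing ingredient in your plan. Beyond this, the paper also treats the multi-medium-jump scenarios explicitly (Cases 2A, 2B: up to three jumps above $w_1(t)$, or four such jumps with no $\varepsilon w(t)$-jump), which your outline collapses into a single intermediate region. Without the extra truncation level and the $\lambda\cdot(\text{truncation})=\mathrm{const}$ choice, the intermediate-region bound does not come out polynomially small, so neither \eqref{eq:uniformHestimate} nor \eqref{eq:uniformComparisonToG} follows from your argument as written.
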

\begin{remark}
 The fact that $y>g(h)$ is to ensure that $g(h)-y<0$ since for small times $s$ the subordinator cannot cross immediately a positive boundary which will be the case if $g(h)>y$ and we will be dealing with trivialities like $\Pbb{\Oc_s(y,h)}=0$.
\end{remark}
\begin{proof}[Proof of Lemma \ref{lem:uniformIntegrability}]
	The case when $y=h=0$ can be dealt with as below with the only simplification that since $\Phi(t)>0$ we do not need \eqref{eq:lowerPhi} to introduce the function $\Phi(t)$ in the inequalities \eqref{eq:case1Final},\eqref{eq:Case 2A} and \eqref{eq:Case 2Cc-2b}. So we deal only with the uniform estimates in $h,y$ under the conditions of the lemma.
Applying \eqref{eq:relationO} for the first term of \eqref{eq:H(t)} taken with $\varphi(t)=g^h_y(t)$ we get that 
\begin{equation}\label{eq:H-1}
H^h_y(t)\leq\Pbb{\Oc^{g^h_y}_t}-\frac{4K^2}{g^h_y(t)}\int_{0}^{t}\int_{0}^{s}\Pbb{\tau_u>g_{y,h}(u),u\leq v\Big|\Delta^{g^h_y(t)}_1=v}e^{-\frac{2Kv}{\sqrt{g^h_y(t)}}}dvds.
\end{equation}
We work with $t>f(Ay)\vee t(A)$. Clearly the second term is bounded by
\begin{align*}
&\frac{4K^2}{g^h_y(t)}\int_{0}^{t}\int_{0}^{s}\Pbb{\tau_u>g_{y,h}(u),u\leq v\Big|\Delta^{g^h_y(t)}_1=v}e^{-\frac{2Kv}{\sqrt{g^h_y(t)}}}dvds=\\
&\frac{4K^2}{g^h_y(t)}\int_{0}^{t}\int_{0}^{s}\Pbb{\Oc^{g^h_y}_v}e^{-\frac{2Kv}{\sqrt{g^h_y(t)}}}dvds\leq\frac{4K^2 t}{g^h_y(t)}\Phi^h_y(t)\leq \tilde{B}(A)\frac{t}{g(t)}\Phi^h_y(t),
\end{align*}
where $\tilde{B}(A)=\frac{A}{A-1}$. For the second inequality we have used that, for any $y>g(h)\vee 0$ and $t>f(Ay)\vee t(A)$, we have that $g(t)=g(t)\vee 1$ and then with $B(A)=1-1/A$
\begin{align}\label{eq:estimateG}
\nonumber &g^{h}_y(t)\geq g(t)\lbrb{\frac{g(t+h)}{g(t)}-\frac{y}{g(t)}}\geq g(t)\lbrb{1-\frac{y}{g\lbrb{f(Ay)}}}\geq  B(A)g(t)\\
&g_{y,h}(t)\geq g(t)\lbrb{\frac{g(t+h)}{g(t)}-\frac{y}{g(t)}}g(t)\geq\lbrb{1-\frac{y}{g\lbrb{f(Ay)}}}\geq B(A)g(t).
\end{align}
However, since $\liminf\ttinf{t}g(t)/t^2\ln^{8/5+\epsilon}(t)=\infty$ we have that $t/g(t)=o\lbrb{\frac{1}{t\ln^{\frac{8}{5}+\epsilon}\lbrb{t}}}$ for $t>f(Ay)\vee t(A)$ and since $\tilde{B}(A)\frac{t}{g(t)}$ does not depend on $h$ and $y>g(h)\vee 0$ we see that we need to consider only $\Pbb{\Oc^{g^h_y}_t}$ for the proof of both \eqref{eq:uniformHestimate} and \eqref{eq:uniformComparisonToG} of our Lemma \ref{lem:uniformIntegrability}.

For brevity we put $w:=g^h_y$, $\tilde w:=g_{y,h}$,  and we denote by $\Delta^{w_1(t)}_k=\inf\{s>\Delta^{w_1(t)}_{k-1}:\tau^{w(t)}_s-\tau^{w(t)}_{s-}>w_1(t)\},\,\Delta^{w_1(t)}_0=0$, where we put $w_\delta(t):=w(t)/\ln^{\delta}(t)$. Put for the duration of this proof $\Oc^{g^h_y}_t:=\Oc^{w(t)}_t$. With the choice of $t>f(Ay)\vee t(A)$ we get that $w(t)=\tilde{w}(t),$ because when $y>\frac{1}{A-1}$ we have that 
\[g(t+h)-y\geq g(f(Ay))-y=\lbrb{A-1}y\geq 1\]
and otherwise 
\[g(t+h)-y\geq g(t(A))-y\geq 1+\frac{2}{A}-\frac{1}{A-1}>1\]
holds for $A>2$.

 To estimate $\Pbb{\Oc^{w(t)}_t}$ precisely we consider gradually several cases which correspond to different scenarios. Collecting all the estimates from each case will lead to our result.

\textbf{Case 1: $\Pbb{\Oc^{w(t)}_t;\Delta^{w_1(t)}_1>t\,}$}

We note that from Lemma \ref{lem:Markov} with $\delta=1$ and $c=1$ we get that for any $n\in\Nb^+$,
\begin{align*}
&\Pbb{\Oc^{w(t)}_t;\Delta^{w_1(t)}_1>t\,}\leq \Pbb{\tau^{w_1(t)}_t> w(t)}\leq  e^{tK\sqrt{n}\frac{\ln^{1/2}(w(t))}{\sqrt{ w(t)}}\int_{0}^{n}\lbrb{e^s-1}\frac{ds}{s^{3/2}}}e^{-n\ln(w(t))}.
\end{align*}
Therefore using \eqref{eq:estimateG} we are able to deduce that for $h>0$ and $y>g(h)\vee 0$ for any $t>f(Ay)\vee t(A)$, for each $n\in \Nb^+$,
\begin{align*}
& \Pbb{\tau^{w_1(t)}_t>\tilde w(t)}\leq  e^{2\tilde{B}(A)K\sqrt{n}\frac{t\ln^{1/2}(w(t))}{\sqrt{g(t)}}\int_{0}^{n}\lbrb{e^s-1}\frac{ds}{s^{3/2}}}e^{-n\ln(w(t))}.
\end{align*}
However, since $\liminf\ttinf{t}g(t)/t^2\ln^{8/5+\epsilon}(t)=\infty$ we see that $$2\tilde{B}(A)K\sqrt{n}\frac{t\ln^{1/2}(w(t))}{\sqrt{g(t)}}\int_{0}^{n}\lbrb{e^s-1}\frac{ds}{s^{3/2}}\leq C_n(A)\ln^{1/2}(w(t)),$$
where $C_n(A)>0$ depends solely on $n\in\Nb^+,A>3$. Henceforth, we get that
\begin{align*}
& \Pbb{\tau^{w_1(t)}_t>\tilde w(t)}\leq  e^{C_n(A)\ln^{1/2}(w(t))-\ln\lbrb{w(t)}}e^{-(n-1)\ln(w(t))}\leq \frac{C(n,A)}{w^{n-1}(t)}\leq \frac{C\lbrb{n,A}}{g^{n-1}(t)},
\end{align*}
where the last inequality follows from \eqref{eq:estimateG} and $C(n,A)>0$ from now on is a generic constant depending on $n,A$.

We note that in general for $s\leq f(y)-h,y>g(h)\vee 0$ we have that $\Pbb{\Oc^{w(t)}_s}=1$ as the curve $g(v+h)-y\leq 0,\,v\leq s$ and hence we obtain that for $t>t(A)\vee f\lbrb{Ay}>f(y)-h$
\begin{equation}\label{eq:lowerPhi}
	\Phi^h_y\lbrb{t}\geq\int_{0}^{f(y)-h}\Pbb{\Oc^{w(t)}_s}ds=f(y)-h.
\end{equation}
From the latter we get that
\begin{equation}\label{eq:case1Final}
\Pbb{\Oc^{w(t)}_t;\Delta^{w_1(t)}_1>t\,}\leq C(n,A)\frac{\Phi^h_y(t)}{\lbrb{f(y)-h}g^{n-1}(t)}\text{\\}.
\end{equation}
\newline
\textbf{Case 2: $\Pbb{\Oc^{w(t)}_t;\Delta^{w_1(t)}_1\leq t\,}$ }

First choose $\varepsilon<1/4$ so that  $1-4\varepsilon >0$ and define 
\[\Delta^{\varepsilon w(t)}_{k}=\inf\{s>\Delta^{\varepsilon w(t)}_{k-1}:\,\tau^{w(t)}_s-\tau^{w(t)}_{s-}>\varepsilon w(t)\},\,\Delta^{\varepsilon w(t)}_{0}=0.\] Note that each difference $\Delta^{\varepsilon w(t)}_{k}-\Delta^{\varepsilon w(t)}_{k-1}\sim Exp\lbrb{2K\lbrb{1/\sqrt{\varepsilon w(t)}-1/\sqrt{w(t)}}}$ since the jumps are defined for the truncated subordinator $\tau^{w(t)}$ and they form an independent sequence of random variables.\\

\textbf{Case 2A: $\Pbb{\Oc^{w(t)}_t;\Delta^{w_1(t)}_1\leq t,\Delta^{w_1(t)}_4>t,\,\Delta^{\varepsilon w(t)}_{1}>t}$}

We observe that putting at most $3$ jumps of at most size $\varepsilon w(t)$ and conditioning on $\{\Delta^{\varepsilon w(t)}_{1}>t,\Delta^{w_1(t)}_4>t\}$ we get 
\begin{align}\label{eq:Case 2A}
\nonumber&\Pbb{\Oc^{w(t)}_t;\Delta^{w_1(t)}_1\leq t,\Delta^{w_1(t)}_4>t,\,\Delta^{\varepsilon w(t)}_{1}>t}=\\
\nonumber&\sum_{k=1}^{3}\int_{0}^{t} \Pbb{\Oc^{w(t)}_t;\Delta^{w_1(t)}_k\in ds, \Delta^{w_1(t)}_{k+1}>t,\,\Delta^{\varepsilon w(t)}_{1}>t}\leq\\ 
\nonumber&\sum_{k=1}^{3}\int_{0}^{t}\Pbb{\Oc^{w(t)}_s;\,\tau^{w_1(t)}_t>(1-k\varepsilon)w(t)}\Pbb{\Delta^{w_1(t)}_k\in ds, \Delta^{w_1(t)}_{k+1}>t,\,\Delta^{\varepsilon w(t)}_{1}>t}\leq\\
&3\Pbb{\tau^{w_1(t)}_t>(1-4\varepsilon) w(t)}\leq C(\varepsilon,A,n)\frac{\Phi^h_y(t)}{\lbrb{f(y)-h}g^{n-1}(t)},
\end{align}
where for the last inequality we have used the procedure leading to \eqref{eq:case1Final} where $C(\varepsilon,A,n)>0$ is a generic constant. Also we have used that subtracting $k$ jumps of size larger than $w_1(t)$ then conditionally on $\{\Delta^{w_1(t)}_{k+1}>t\}$ we have that $\tau^{w(t)}=\tau^{w_1(t)}$.\\

\textbf{Case 2B: $\Pbb{\Oc^{w(t)}_t;\,\Delta^{w_1(t)}_4\leq t,\,\Delta^{\varepsilon w(t)}_{1}>t}$}

Conditioning on $\Delta^{w_1(t)}_1$ we easily get
\begin{align}\label{eq:Case 2B}
\nonumber&\Pbb{\Oc^{w(t)}_t;\,\Delta^{w_1(t)}_4\leq t,\,\Delta^{\varepsilon w(t)}_{1}>t}=\int_{0}^{t}\Pbb{\Oc^{w(t)}_t;\Delta^{w_1(t)}_1\in ds,\,\Delta^{w_1(t)}_4\leq t,\,\Delta^{\varepsilon w(t)}_{1}>t}\leq \\
\nonumber&\int_{0}^{t}\Pbb{\Oc^{w_1(t)}_s}\Pbb{\Delta^{w_1(t)}_1\in ds,\,\Delta^{w_1(t)}_4\leq t}\leq\Pbb{\Delta^{w_1(t)}_3\leq t} \int_{0}^{t}\Pbb{\Oc^{w_1(t)}_s}\Pbb{\Delta^{w_1(t)}_1\in ds}\leq\\
\nonumber &\lbrb{\Pbb{\Delta^{w_1(t)}_1\leq t}}^3 \frac{2K\ln^{1/2}(t)}{\sqrt{w(t)}}\int_{0}^{t}\Pbb{\Oc^{w_1(t)}_s}ds\leq C(A)\frac{t^3\ln^{2}(t)}{w^2(t)}\Phi^h_y(t)\leq\\
& \tilde{C}(A)\frac{t^3\ln^{2}(t)}{g^2(t)}\Phi^h_y(t)=\tilde{C}(A)\frac{t^3\ln^{2}(t)}{g^{3/2}(t)}\frac{\Phi^h_y(t)}{\sqrt{g(t)}}\leq  \tilde{C}'(A)\frac{\Phi^h_y(t)}{t\ln^{6/5+2\epsilon}(t)},
\end{align}
where in the first inequality we excluded $\{\Delta^{\varepsilon w(t)}_{1}>t\}$ and estimated \[\Pbb{\Oc^{w(t)}_t\Big|\Delta^{w_1(t)}_1=s,\Delta^{w_1(t)}_4\leq t}\leq \Pbb{\Oc^{w_1(t)}_s};\]
next for the second inequality we enlarged the time for possible arrivals of jump $2,3,4$; for the third inequality we further allowed each jump $2,3,4$ to take $t$ amount of time to occur and estimated the density of $\Delta^{w_1(t)}_1\sim Exp\lbrb{2K\lbrb{1/\sqrt{w_1(t)}-1/\sqrt{w(t)}}}$ generously with $2K/\sqrt{w_1(t)}$; for the fourth we note that similarly
 \[\Pbb{\Delta^{w_1(t)}_1\leq t}=1-e^{-2Kt/\sqrt{w_1(t)}}\leq 2Kt/\sqrt{w_1(t)};\] 
 fifth we use \eqref{eq:estimateG} to bound the expressions with $w(t)$ uniformly with $g(t)$ and finally we recall that $\liminf\ttinf{t}g(t)/t^2\ln^{8/5+\epsilon}(t)=\infty$, i.e. \eqref{eq:conditionGrowth} holds.\\

\textbf{Case 2C: $\Pbb{\Oc^{w(t)}_t;\,\Delta^{\varepsilon w(t)}_{1}\leq t}$}

Define $p(t)=\ln^{-\gamma}(t)$, $p^*(t)=1-p(t)$ and $\gamma<3/5+\epsilon$ to be chosen later. Define similarly as before the sequence of jumps exceeding the level $p^*(t)w(t)$
\[\Delta^{p^*(t) w(t)}_{k}=\inf\{s>\Delta^{p^*(t)  w(t)}_{k-1}:\,\tau^{w(t)}_s-\tau^{w(t)}_{s-}\in \lbrb{p^*(t)w(t),w(t)}\},\Delta^{p^*(t) w(t)}_{0}=0,\]
where we recall that we already work with a subordinator whose jumps larger than $w(t)$ have been truncated. We have again
\[\Delta^{p^*(t) w(t)}_{k}-\Delta^{p^*(t) w(t)}_{k-1}\sim Exp\lbrb{2K\lbrb{\frac{1}{\sqrt{p^*(t)w(t)}}-\frac{1}{\sqrt{w(t)}}}},\]
wherefrom we get easily from \eqref{eq:estimateG}, $\liminf\ttinf{t}g(t)/t^2\ln^{8/5+\epsilon}(t)=\infty$ and $1-\sqrt{1-x}\leq x$ that 
\begin{equation}\label{eq:estimateDensityJumpCloseToBoundary}
\Pbb{\Delta^{p^*(t) w(t)}_{1}\in ds}\leq \frac{2Kp(t)}{\sqrt{p^*(t)w(t)}}ds\leq  \frac{C(A)}{\sqrt{g(t)}\ln^{\gamma}(t)}ds\leq\frac{C(A)}{t\ln^{4/5+\epsilon/2+\gamma}(t)}ds
\end{equation}
since $g(1)=1$ and $g(t(A))=1+\frac{2}{A}$ imply that $t(A)>1$ and henceforth $p^*(t)\geq p^*(t(A))=1-\ln^{-\gamma}(t(A))>0$. \\

\textbf{Case 2Ca: $\Pbb{\Oc^{w(t)}_t;\,\Delta^{\varepsilon w(t)}_{1}\leq t, \Delta^{p^*(t) w(t)}_{1}<t}$}

We estimate ignoring the event $\{\Delta^{\varepsilon w(t)}_{1}\leq t\}$, disintegrating on the possible position of $\Delta^{p^*(t) w(t)}_{1}$ and estimating conditionally on $\{\Delta^{p^*(t) w(t)}_{1}=s\}$ that
\[\Pbb{\Oc^{w(t)}_t\Big|\Delta^{p^*(t) w(t)}_{1}=s}\leq \Pbb{\Oc^{w(t)}_s}\]
to get the following chain of inequalities
\begin{align}\label{eq:Case 2Ca}
\nonumber&\Pbb{\Oc^{w(t)}_t;\,\Delta^{\epsilon w(t)}_{1}\leq t, \Delta^{p^*(t) w(t)}_{1}<t}\leq \int_{0}^{t}\Pbb{\Oc^{w(t)}_s}\Pbb{\Delta^{p^*(t) w(t)}_{1}\in ds}\leq \\
&\frac{C(A)}{\sqrt{g(t)}\ln^{\gamma}(t)}\Phi^h_y(t)\leq \frac{C(A)}{t\ln^{4/5+\epsilon/2+\gamma}(t)}\Phi^h_y(t),
\end{align}
where we also used \eqref{eq:estimateDensityJumpCloseToBoundary}.\\

\textbf{Case 2Cb: $\Pbb{\Oc^{w(t)}_t;\,\Delta^{\varepsilon w(t)}_{2}\leq t, \Delta^{p^*(t) w(t)}_{1}>t}$}

We start again by disintegrating the time of first jump $\Delta^{\varepsilon w(t)}_{1}$,  estimating as in \eqref{eq:estimateDensityJumpCloseToBoundary} employing the same techniques \eqref{eq:estimateG}, 
\[\Pbb{\Delta^{\varepsilon w(t)}_{1}\in ds}\leq \frac{C(A,\varepsilon)}{\gN{t}}ds;\,\,\Pbb{\Delta^{\varepsilon w(t)}_{1}\leq t}\leq \frac{C(A,\varepsilon)t}{\sqrt{g(t)}}\]
and using $\liminf\ttinf{t}g(t)/t^2\ln^{8/5+\epsilon}(t)=\infty$ to obtain in the same way as in \eqref{eq:Case 2B} the preliminary estimate
\begin{align}\label{eq:Case 2Cb}
\nonumber &\Pbb{\Oc^{w(t)}_t;\,\Delta^{\varepsilon w(t)}_{2}\leq t, \Delta^{p^*(t) w(t)}_{1}>t}\leq \frac{C^2(A,\varepsilon)t}{g(t)}\int_{0}^{t}\Pbb{\Oc^{w(t)}_s}ds\leq\\
&C^2(A,\varepsilon)\frac{t}{\sqrt{g(t)}}\frac{\Phi^h_y(t)}{\sqrt{g(t)}}\leq\frac{C^2(A,\varepsilon)}{t\ln^{8/5+\epsilon}(t)}\Phi^h_y(t).
\end{align}

\textbf{Case 2Cc: $\Pbb{\Oc^{w(t)}_t;\,\Delta^{\epsilon w(t)}_{1}\leq t,\,\Delta^{\epsilon w(t)}_{2}> t,\,\Delta^{p^*(t) w(t)}_{1}>t}$}

We again disintegrate the position of $\Delta^{\epsilon w(t)}_{1}$ in similar fashion, then substitute the highest possible value of the jump at time $\Delta^{\epsilon w(t)}_{1}$ namely $p^*(t) w(t)$ to get for the end point $\tau^{w(t)}_t>w(t)-p^*(t)w(t)=p(t)w(t)$ and then use \eqref{eq:estimateG} to derive the preliminary estimate
\begin{align*}
&\Pbb{\Oc^{w(t)}_t;\,\Delta^{\varepsilon w(t)}_{1}\leq t,\,\Delta^{\varepsilon w(t)}_{2}> t,\,\Delta^{p^*(t) w(t)}_{1}>t}\leq \\
&\frac{C(A,\varepsilon)}{\sqrt{g(t)}}\int_{0}^{t}\Pbb{\Oc^{\varepsilon w(t)}_s,\tau^{\varepsilon w(t)}_t>p(t)w(t)}ds\leq\\
&\frac{C(A,\varepsilon)}{\sqrt{g(t)}}\int_{0}^{t}\Pbb{\Oc^{\varepsilon w(t)}_s,\,\tau^{\varepsilon w(t)}_s\leq p(t)w(t)/2,\,\tau^{\varepsilon w(t)}_t>p(t)w(t)}ds+\\
&\frac{C(A,\varepsilon)}{\sqrt{g(t)}}\int_{0}^{t}\Pbb{\Oc^{\varepsilon w(t)}_s,\,\tau^{\varepsilon w(t)}_s> p(t)w(t)/2}ds=\\
& S(t)+S^*(t).
\end{align*}
Let us first estimate $S(t)$ to see that its implicit dependence on $y$ is irrelevant. We note that 
\[\{\Oc^{\varepsilon w(t)}_s;\,\tau^{\varepsilon w(t)}_s\leq p(t)w(t)/2,\,\tau^{\varepsilon w(t)}_t>p(t)w(t)\}\subseteq \{\Oc^{\varepsilon w(t)}_s;\,\tau^{\varepsilon w(t)}_{t}-\tau^{\varepsilon w(t)}_{s}>p(t)w(t)/2\}.\]
Clearly from the fact that $\tau^{\varepsilon w(t)}_{t}-\tau^{\varepsilon w(t)}_{s}$ is independent of $\Oc^{\varepsilon w(t)}_s$  and $\tau^{\varepsilon w(t)}_{t}$ is a.s. increasing we are able to imply that
\begin{align}\label{eq:Case 2Cc-1}
\nonumber&S(t)\leq C(A,\varepsilon)\frac{\Pbb{\tau^{\varepsilon w(t)}_{t}>p(t)w(t)/2}}{\sqrt{g(t)}}\int_{0}^{t}\Pbb{\Oc^{w(t)}_s}ds\leq\\
\nonumber&C(A,\varepsilon)\frac{\Pbb{\tau_{t}>p(t)w(t)/2}}{\sqrt{g(t)}}\int_{0}^{t}\Pbb{\Oc^{w(t)}_s}ds\leq D(A,\varepsilon)\frac{t}{g(t)\sqrt{p(t)}}\int_{0}^{t}\Pbb{\Oc^{w(t)}_s}ds\leq\\
&D(A,\varepsilon)\frac{t\ln^{\gamma/2}(t)}{\sqrt{g(t)}}\frac{\Phi^h_y(t)}{\sqrt{g(t)}}\leq D(A,\varepsilon)\frac{1}{t\ln^{8/5+\epsilon-\gamma/2}(t)}\Phi^h_y(t),
\end{align}
where we have first estimated $\tau^{\epsilon w(t)}_{t}\leq \tau_{t}$, then used that since $\tau_t$ is stable with index $1/2$ we have that 
\[\Pbb{\tau_{t}>p(t)w(t)/2}=\Pbb{\tau_{1}>t^{-2}p(t)w(t)/2}\leq D\frac{t}{\sqrt{p(t)w(t)}},\]
applied the definition of $p(t)=\ln^{-\gamma}(t)$ and \eqref{eq:estimateG} to compare uniformly $w(t)$ with $g(t)$ from below and the recurring $\liminf\ttinf{t}g(t)/t^2\ln^{8/5+\epsilon}(t)=\infty$.

Let us next estimate $S^*(t)$. Denote $\delta=1+\gamma$ and recall that by definition $w_\delta(t)=w(t)/\ln^{\delta}(t)$. Define as always $\Delta^{w_{\delta}(t)}_1$ the time of first jump exceeding $w_\delta(t)$ and note that its density can be estimated as in similar cases before with the help of \eqref{eq:estimateG} by 
\[\Pbb{\Delta^{w_{\delta}(t)}_1\in ds}\leq \frac{C(A)\ln^{\delta/2}(t)}{\sqrt{g(t)}}ds.\] 
We write each integrand of $S^*$ as follows
\begin{align*}
&\Pbb{\Oc^{\varepsilon w(t)}_s,\,\tau^{\varepsilon w(t)}_s> p(t)w(t)/2}=\\
&\Pbb{\Oc^{\varepsilon w(t)}_s,\,\tau^{\varepsilon w(t)}_s> p(t)w(t)/2,\Delta^{w_{\delta}(t)}_1\leq s}+\Pbb{\Oc^{\varepsilon w(t)}_s,\,\tau^{\varepsilon w(t)}_s> p(t)w(t)/2,\Delta^{w_{\delta}(t)}_1>s}.
\end{align*}
For the first we get
\begin{align}\label{eq:Case 2Cc-2a}
\nonumber& \frac{C(A,\varepsilon)}{\sqrt{g(t)}}\int_{0}^{t}\Pbb{\Oc^{\varepsilon w(t)}_s,\,\tau^{\varepsilon w(t)}_s> p(t)w(t)/2,\Delta^{w_{\delta}(t)}_1\leq s}ds=\\
\nonumber& \frac{C(A,\varepsilon)}{\sqrt{g(t)}}\int_{0}^{t}\int_{0}^{v}\Pbb{\Oc^{\varepsilon w(t)}_s,\,\tau^{\varepsilon w(t)}_s> p(t)w(t)/2,\Delta^{w_{\delta}(t)}_1\in dv }ds\leq \\
\nonumber&\frac{C(A,\varepsilon)}{\sqrt{g(t)}}\int_{0}^{t}\int_{0}^{v}\Pbb{\Oc^{\varepsilon w(t)}_v}\Pbb{\Delta^{w_{\delta}(t)}_1\in dv}ds\leq
\frac{C(A,\varepsilon)t\ln^{\delta/2}(t)}{g(t)}\Phi^h_y(t)\leq\\
& \frac{C(A,\varepsilon)t\ln^{\delta/2}(t)}{\sqrt{g(t)}}\frac{\Phi^h_y(t)}{\sqrt{g(t)}}\leq\frac{C'(A,\varepsilon)}{t\ln^{11/10+\epsilon-\gamma/2}(t)}\Phi^h_y(t),
\end{align}
where we have estimated as measures 
\begin{align*}
&\Pbb{\Oc^{\varepsilon w(t)}_s,\,\tau^{\varepsilon w(t)}_s> p(t)w(t)/2,\,\Delta^{w_{\delta}(t)}_1\in dv}\leq\\
& \Pbb{\Oc^{\varepsilon w(t)}_v,\,\Delta^{w_{\delta}(t)}_1\in dv}\leq  \Pbb{\Oc^{\varepsilon w(t)}_v}\Pbb{\Delta^{w_{\delta}(t)}_1\in dv}.
\end{align*}
For the second integrand we simply estimate in the following generous manner truncating all events and putting the largest values at the point $t$, i.e. $\tau^{w_{\delta}(t)}_t$,
\begin{align*}
&\Pbb{\Oc^{\varepsilon w(t)}_s,\,\tau^{\varepsilon w(t)}_s> p(t)w(t)/2,\Delta^{w_{\delta}(t)}_1>s}\leq \Pbb{\tau^{w_{\delta}(t)}_t>p(t)w(t)/2}.
\end{align*}
Using the exponential Markov inequality with $\lambda=2n w^{-1}_\delta(t)$, the last expression for the \LL-Khintchine exponent of $\tau^{w_\delta(t)}$ in \eqref{eq:LevyKhintchineTruncated},  $p(t)=\ln^{-\gamma}(t)$ and $\delta=1+\gamma$ we get that
\begin{align*}
&\Pbb{\tau^{w_{\delta}(t)}_t>p(t)w(t)/2}\leq e^{-\lambda p(t)w(t)/2}\mathbb{E}\bigl[e^{\lambda \tau^{w_{\delta}(t)}_t } \bigr] \\ &\quad =e^{tK\sqrt{2n}\frac{\ln^{\delta/2}(t)}{\sqrt{w(t)}}\int_{0}^{2n}\lbrb{e^{s\frac{2n}{w_{\delta}(t)}}-1}\frac{ds}{s^{3/2}}}e^{-2n w^{-1}_\delta(t) p(t)w(t)/2}=\\
&\quad = e^{tK\sqrt{2n}\frac{\ln^{\delta/2}(t)}{\sqrt{w(t)}}\int_{0}^{2n}\lbrb{e^{s\frac{2n}{w_{\delta}(t)}}-1}\frac{ds}{s^{3/2}}}e^{-n \ln(t)}\\
&\quad \leq
e^{C(A)K\sqrt{n}\frac{1}{\ln^{3/10-\gamma/2+\epsilon/2}(t)}\int_{0}^{2n}\lbrb{e^{s\frac{2n}{w_{\delta}(t)}}-1}\frac{ds}{s^{3/2}}}e^{-n \ln(t)},
\end{align*}
where for the exponent of the first factor in the last inequality we have used \eqref{eq:estimateG} and $\liminf\ttinf{t}g(t)/t^2\ln^{8/5+\epsilon}(t)=\infty$. Therefore
\begin{align}\label{eq:Case 2Cc-2b}
\nonumber&\frac{C(A,\varepsilon)}{\sqrt{g(t)}}\int_{0}^{t}\Pbb{\Oc^{\varepsilon w(t)}_s,\,\tau^{\varepsilon w(t)}_s> p(t)w(t)/2,\,\Delta^{w_{\delta}(t)}_1>s}ds\leq\\
\nonumber&t\frac{C(A,\varepsilon)}{\sqrt{g(t)}} \nonumber e^{C(A)K\sqrt{n}\frac{1}{\ln^{3/10-\gamma/2+\epsilon/2}(t)}}e^{-n \ln(t)}\frac{\Phi^h_y(t)}{f(y)-h}\leq \\
&\frac{C(A,\varepsilon)}{\sqrt{t^{n-1}g(t)}}\frac{\Phi^h_y(t)}{f(y)-h}
\end{align}
provided $\gamma=1/5$ as then the positive exponent is bounded since the inequality  $3/10-\gamma/2+\epsilon/2>0$ holds and $n/w_\delta(t)$ is bounded for $t>t(A)$. The appearance of the factor $\frac{\Phi^h_y(t)}{f(y)-h}$ follows from the inequality \eqref{eq:lowerPhi}.

We  collect all terms in \eqref{eq:Case 2Cc-2a}, \eqref{eq:Case 2Cc-2b}, \eqref{eq:Case 2Cc-1}, \eqref{eq:Case 2Cb}, \eqref{eq:Case 2Ca} updating for $\gamma=1/5$ and choosing $n=7$ to get
\begin{align}\label{eq:firstFinalEstimate}
&\Pbb{\Oc^{w(t)}_t;\,\Delta^{\varepsilon w(t)}_{1}\leq t}\leq \frac{C(A,\varepsilon,\gamma,n)}{t\ln^{1+\epsilon/2}(t)}\Phi^h_y(t)+t^{-4}\frac{\Phi^h_y(t)}{f(y)-h}.
\end{align}
We note the worst logarithmic bound comes from $\eqref{eq:Case 2Ca}$.

We are ready now to conclude the proof by noting by the same choice of $n=7$ all bounds in \eqref{eq:Case 2A}, \eqref{eq:Case 2B} and \eqref{eq:case1Final} are of at most the same and faster decay taking into account that $g(t)/t\to \infty,\to\infty$. Therefore we conclude that uniformly for $t>f(Ay)\vee t(A)$, $y>g(h)\vee 0$, we have that
\begin{align}\label{eq:FinalEstimate}
&\Pbb{\Oc^{w(t)}_t}\leq \frac{C(A,\epsilon,\gamma,n)}{t\ln^{1+\epsilon/2}(t)}\Phi^h_y(t)+t^{-4}\frac{\Phi^h_y(t)}{f(y)-h}.
\end{align}
and using \eqref{eq:H-1} we conclude that
\[H^h_y(t)\leq C(A,\varepsilon,7,1/5)\lbrb{\frac{1}{t\ln^{1+\epsilon/2}(t)}+\frac{1}{t^4(f(y)-h)}}\Phi^h_y(t)\]
and since the bound is uniform and we get that $\rho^h_y$ is integrable at infinity we deduce our proof of \eqref{eq:uniformHestimate}. To prove \eqref{eq:uniformComparisonToG} we note that all estimates above which contain $t^n$ can be uniformly majorized by $u_1(t)/(f(y)-h)\sqrt{g(t)}$ with $u_1(t)$ uniformly in $y$ tending to zero. For the other estimates \eqref{eq:Case 2B},  \eqref{eq:Case 2Ca}, \eqref{eq:Case 2Cb}, \eqref{eq:Case 2Cc-2a} and \eqref{eq:Case 2Cc-1} choosing the worst estimates we get that they do not exceed with $\gamma=1/5$
\[\lbrb{\lbrb{\frac{t\ln^{3/5}(t)}{\sqrt{g(t)}}}\bigvee\lbrb{\frac{1}{\ln^{1/5}(t)}}\bigvee\lbrb{\frac{t}{\gN{t}}}\bigvee\lbrb{\frac{t^3\ln^2(t)}{g^{3/2}(t)}}}\frac{\Phi^h_y(t)}{\sqrt{g(t)}}=u_2(t)\frac{\Phi^h_y(t)}{\sqrt{g(t)}}.\]
Therefore from \eqref{eq:conditionGrowth} we get $u_2(t)\to 0$ and henceforth we get
\[\rho^h_y(t)\leq C(A) \lbrb{\frac{u_1(t)}{(f(y)-h)\sqrt{g(t)}}+\frac{u_2(t)}{\sqrt{g(t)}}},\]
which due to uniformity in $y$ settles the last claim. We could easily observe that in each bound we obtained along the way we estimated $w(t)\geq B(A)g(t)$ in the denominator and then it easily follows that \eqref{eq:uniformComparisonToG} holds with $w=g_{y,h}$ for $g$.
\end{proof}
The next Lemma is auxiliary and is used throughout the proof above 
\begin{lemma}\label{lem:Markov}
Let $a>0$ then we have that with $a_\delta=a/\ln^\delta(a)$ and $\delta>0$ for any $t>0$, $c>0$  and $n\in \Nb$
\begin{equation}
\Pbb{\tau^{a_\delta}_t>ca}\leq e^{\frac{tK\sqrt{n}\ln^{\delta/2}(a)}{\sqrt{ca}}\int_{0}^{n/c}\lbrb{e^s-1}\frac{ds}{s^{3/2}}}e^{-n\ln^{\delta}(a)}
\end{equation}
\end{lemma}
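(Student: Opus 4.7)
The proof proposal is a direct application of exponential Chebyshev/Markov to the truncated subordinator $\tau^{a_\delta}$, with the right choice of the dual parameter $\lambda$. The only things to notice are (i) formula \eqref{eq:LevyKhintchineTruncated} gives us an analytic expression for the Laplace exponent $\Psi_{a_\delta}(\lambda)=K\int_0^{a_\delta}(e^{\lambda s}-1)s^{-3/2}ds$ valid for all $\lambda>0$, and (ii) the $1/2$-stability scaling present in this integral will produce precisely the integral $\int_0^{n/c}(e^s-1)s^{-3/2}ds$ appearing in the statement if we choose $\lambda$ so that $\lambda a_\delta=n/c$.

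The plan is as follows. First I would write, for an arbitrary $\lambda>0$,
\begin{equation*}
\Pbb{\tau^{a_\delta}_t>ca}\;\leq\;e^{-\lambda ca}\,\Ebb{e^{\lambda\tau^{a_\delta}_t}}\;=\;\exp\!\lbrb{-\lambda ca+tK\int_{0}^{a_\delta}\lbrb{e^{\lambda s}-1}\frac{ds}{s^{3/2}}},
\end{equation*}
using \eqref{eq:LevyKhintchineTruncated}. Next I would specialise to
\begin{equation*}
\lambda\;=\;\frac{n\ln^{\delta}(a)}{ca},
\end{equation*}
which is precisely tuned so that $\lambda ca=n\ln^{\delta}(a)$, producing the second factor $e^{-n\ln^{\delta}(a)}$ of the claimed bound.

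The remaining step is to massage the Laplace exponent. Performing the change of variables $u=\lambda s$ inside $\int_{0}^{a_\delta}(e^{\lambda s}-1)s^{-3/2}ds$ yields $\sqrt{\lambda}\int_{0}^{\lambda a_\delta}(e^u-1)u^{-3/2}du$. With the above choice of $\lambda$ one has $\lambda a_\delta=\frac{n\ln^{\delta}(a)}{ca}\cdot\frac{a}{\ln^{\delta}(a)}=\frac{n}{c}$ and $\sqrt{\lambda}=\frac{\sqrt{n}\ln^{\delta/2}(a)}{\sqrt{ca}}$, so the first factor becomes exactly
\begin{equation*}
\exp\!\lbrb{\frac{tK\sqrt{n}\ln^{\delta/2}(a)}{\sqrt{ca}}\int_{0}^{n/c}\lbrb{e^s-1}\frac{ds}{s^{3/2}}}.
\end{equation*}
Combining the two factors gives the stated inequality.

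There is essentially no obstacle here: the convergence of the integral $\int_0^{n/c}(e^s-1)s^{-3/2}ds$ at $0$ is automatic since $e^s-1\sim s$, and $\Psi_{a_\delta}$ is defined for all $\lambda>0$ because the truncation removes all large jumps, so the exponential moment of $\tau^{a_\delta}_t$ is finite for every $\lambda$. Thus the only ``content'' of the lemma is the miraculous-looking but purely computational cancellation $\lambda a_\delta=n/c$, which is the reason the bound is formulated in terms of $n,c$ rather than the original variables $a,\delta$.
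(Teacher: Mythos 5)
Your proof is correct and is exactly the paper's proof: the paper's own justification for Lemma~\ref{lem:Markov} simply states ``Markov inequality together with $\Pi(ds)=Kds/s^{3/2}$, \eqref{eq:LevyKhintchineTruncated} and a choice of $\lambda=\frac{n}{c}a_\delta^{-1}$,'' and your $\lambda=\frac{n\ln^\delta(a)}{ca}$ is precisely $\frac{n}{c}a_\delta^{-1}$. The only content you add is spelling out the change of variables $u=\lambda s$ giving $\sqrt{\lambda}\int_0^{\lambda a_\delta}(e^u-1)u^{-3/2}du$ with $\lambda a_\delta=n/c$ and $\sqrt{\lambda}=\sqrt{n}\ln^{\delta/2}(a)/\sqrt{ca}$, which is exactly the computation the authors leave implicit.
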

\begin{proof}
This is a simple proof using the Markov inequality together with $\Pi(ds)=Kds/s^{3/2}$, \eqref{eq:LevyKhintchineTruncated} and a choice of $\lambda=\frac{n}{c}a^{-1}_\delta$.
\end{proof}

\section{Proofs for Section \ref{sec:transience}}
\begin{proof}[Proof of Theorem \ref{thm:transience}]
Since $\Ebb{f(\tau_1)}<\infty$ we have thanks to \eqref{eq:equivalenceToExpectation} that $J(g)<\infty$ and hence according to Lemma \ref{lem:finitenessPhi} that $\Phi(\infty)<\infty$. Therefore the clocks $\mathfrak{C}_t$ defined in \eqref{eq:clocks} converge in distribution to $\mathfrak{C}$.

Next we show that under any possible limit of $\Pb_t$ the inverse local time at zero $\tau=\lbcurlyrbcurly{\tau_s}_{s\geq 0}$ satisfies $\Qbb{\tau_x<\infty}=\Pbb{\mathfrak{C}>x}$ and $\Qbb{\tau_x\in dy;\,\Bc}=\frac{\Phi^x_y(\infty)}{\Phi(\infty)}\Pbb{\tau_x\in dy;\Oc_x},$ for any $\Bc\subseteq\Oc_h;\,\Bc\in\Fc_h$, where $\Qb$ denotes a generic possible weak limit. Thus the possible limit of $\tau$ under $\Pb_t$ is unique. Note that for any $x>0$, $t>x$ , $y>g(x)$ and $\Bc\subseteq\Oc_x;\,\Bc\in\Fc_x$
\begin{align*}
\Pbb{\tau_x\in dy; \Bc|\Oc_t}=\frac{\Pbb{\Oc_{t-x}(x,y)}}{\Pbb{\Oc_t}}\Pbb{\tau_x\in dy;\,\Bc;\,\Oc_x},
\end{align*}
where $\Oc_{t}(x,y)=\{\tau_{s}>g(s+x)-y;\,\forall s\leq t\}$. Clearly, from  Theorem \ref{thm:asymptoticFiniteCase} we have that with $\Phi^x_y(t)=\int_{0}^{t}\Pbb{\Oc_{s}(x,y)}ds$ and $x,y$ fixed
\[\lim\ttinf{t}\frac{\Pbb{\Oc_{t-x}(x,y)}}{\Pbb{\Oc_t}}=\frac{\Phi^x_y(\infty)}{\Phi(\infty)}\lim\ttinf{t}\frac{\sqrt{g(t)}}{\sqrt{g(t-x+x)-y}}=\frac{\Phi^x_y(\infty)}{\Phi(\infty)},\]
where $\Phi^x_y(\infty)<\infty$ follows from Theorem \ref{thm:asymptoticFiniteCase} since $g(s+x)-y$ satisfies \eqref{eq:mildCondition} and \eqref{eq:finitenessPhi}, namely 
\[\int_{1}^{\infty}\frac{ds}{\sqrt{g(s+x)-y}}<\infty.\]
This shows the convergence of $\tau$ to a unique limit in law under $\Pb_t$ and shows that
\[\Qbb{\tau_x\in dy}=\frac{\Phi^x_y(\infty)}{\Phi(\infty)}\Pbb{\tau_x\in dy;\Oc_x},\]
which proves \eqref{eq:tauTransience}. Then \eqref{eq:h-transformTransience} follows immediately.
Since $\Pbb{\Oc_{t}(x,y)}$ is monotone in $y$, for any $g(x)<y<B$ with $B>0$, we can use the dominated convergence theorem to get using the definition of $\Phi^x_y$
\begin{align*}
\Qbb{\tau_x\in(g(x),B)}&=\lim\ttinf{t}\int_{y=g(x)}^{B}\Pbb{\tau_x\in dy\mid \Oc_t}\\
&=\lim\ttinf{t}\int_{y=g(x)}^{B}\frac{\Pbb{\Oc_{t-x}(x,y)}}{\Pbb{\Oc_t}}\Pbb{\tau_x\in dy;\Oc_x}\\
&=\frac{1}{\Phi(\infty)}\int_{y=g(x)}^{B}\Phi^x_y(\infty)\Pbb{\tau_x\in dy;\Oc_x}\\
&=\frac{1}{\Phi(\infty)}\int_{y=g(x)}^{B}\int_{0}^{\infty}\Pbb{\Oc_v(x,y)}
dv\Pbb{\tau_x\in dy;\Oc_x}\\
&=\frac{\int_{x}^{\infty}\Pbb{\Oc_u; \tau_x\in (g(x),B)}du}{\Phi(\infty)}.
\end{align*}
Using the monotone convergence theorem we get that
\begin{align*}
&\Qbb{\tau_x<\infty}=\lim\ttinf{B}\frac{\int_{x}^{\infty}\Pbb{\Oc_u; \tau_x\in (g(x),B)}du}{\Phi(\infty)}=\frac{\int_{x}^{\infty}\Pbb{\Oc_u}du}{\Phi(\infty)}=\Pbb{\mathfrak{C}>x}.
\end{align*}
Since $\tau$ is a.s. non-decreasing we conclude that under $\Qb$ there is a random explosion time $\Tc$ for $\tau$ such that $\tau_s=\infty,s\geq \Tc$ and $\Tc\stackrel{d}{=}\mathfrak{C}$.

Next we prove that the random elements $Z^t=\lbcurlyrbcurly{\tau,\Delta^{g(t)}_1}\in\Dc\lbrb{0,\infty}\times \Rb^+$ converge under $\Pb_t$ to a random element $Z$ and we specify the structure of $Z$ under $\Qb$.  
For any $x>0$, $y>g(x)$, $t>b>a>x$ with $a,b$ fixed and $t\to\infty$ we have that with $\Bc\subseteq\Oc_x;\,\Bc\in\Fc_x,$ 
\begin{align*}
&\Pbb{\tau_x\in dy;\,\Bc;\,\Delta^{g(t)}_1\in (a,b);\,\Oc_t}=\\
&\int_{a}^{b}\Pbb{\tau_x\in dy;\,\Bc;\,\Delta^{g(t)}_1\in ds;\,\Oc_x}=\int_{a}^{b}\Pbb{\tau_x\in dy;\,\Delta^{g(t)}_1\in ds;\,\Bc;\,\Oc_{\Delta^{g(t)}_1}}=\\
&\int_{a}^{b}\Pbb{\tau^{g(t)}_x\in dy;\,\Bc;\,\Oc^{g(t)}_s}\Pbb{\Delta^{g(t)}_1\in ds}\sim \frac{2K}{\gN{t}}\int_{a}^{b}\Pbb{\tau^{g(t)}_x\in dy;\,\Bc;\,\Oc^{g(t)}_s}ds\sim\\
&\frac{\Pbb{\Oc_t}}{\Phi(\infty)}\int_{a}^{b}\Pbb{\tau^{g(t)}_x\in dy;\,\Bc;\,\Oc^{g(t)}_s}ds=\frac{\Pbb{\Oc_t}}{\Phi(\infty)}\int_{a}^{b}\Pbb{\tau^{g(t)}_x\in dy;\,\Bc\mid\Oc^{g(t)}_s}\Pbb{\Oc^{g(t)}_s}ds,
\end{align*}
where we have used that since $\Delta^{g(t)}_1\sim Exp\lbrb{\frac{2K}{\gN{t}}}$ then for $s\in\lbbrbb{a,b}$ we have uniformly
\[\Pbb{\Delta^{g(t)}_1\in ds}=\frac{2K}{\gN{t}}e^{-\frac{2Ks}{\gN{t}}}ds\sim\frac{2K}{\gN{t}}ds \]
and then thanks to \eqref{eq:asymptoticFiniteCase} of Theorem \ref{thm:asymptoticFiniteCase} that $\frac{2K}{\gN{t}}\sim \frac{\Pbb{\Oc_t}}{\Phi(\infty)}$. Furthermore, since by definition $\frac{\Pbb{\Oc_s}}{\Phi(\infty)}ds=\Pbb{\mathfrak{C}\in ds}$, see \eqref{eq:clocks} in the limit, we continue the relations
\begin{align*}
&\Pbb{\tau_x\in dy;\,\Bc;\,\Delta^{g(t)}_1\in (a,b);\,\Oc_t}\sim \Pbb{\Oc_t}\int_{a}^{b}\Pbb{\tau^{g(t)}_x\in dy;\,\Bc\mid\Oc^{g(t)}_s}\frac{\Pbb{\Oc_s}}{\Phi(\infty)}ds=\\
&\Pbb{\Oc_t}\int_{a}^{b}\Pbb{\tau^{g(t)}_x\in dy;\,\Bc\mid\Oc^{g(t)}_s}\Pbb{\mathfrak{C}\in ds}\sim
\Pbb{\Oc_t}\int_{a}^{b}\Pbb{\tau_x\in dy;\,\Bc\mid\Oc_s}\Pbb{\mathfrak{C}\in ds}.
\end{align*}
Clearly, taking limits after conditioning on $\Oc_t$, we get that under $\Qb$, $\lbcurlyrbcurly{\lbcurlyrbcurly{\tau_s}_{s\leq \Tc};\Tc}$ has the law 
\begin{align}\label{eq:lawJoint}
 &\Qbb{\tau\in\Bc;\Tc\in\lbrb{a,b}}=\\
 \nonumber&\int_{a}^{b}\Qbb{\tau \in\Bc\mid \Tc=s}\Qbb{\Tc\in ds}=\int_{a}^{b}\Pbb{\tau \in\Bc\mid \Oc_s}\Pbb{\mathfrak{C}\in ds},
\end{align}
where $\Bc\in\Fc_a,\,b>a>0$. Clearly, $\tau_s=\infty, s\geq\Tc$.

Next we consider the original Brownian motion $B$. To show that $B$ converges under $\Pb_t$ to the process specified in the theorem we shall rely on the so-called Ito's representation of the Brownian motion via its excursions away from zero. This is well developed and explained in the proof of Theorem 2 in \cite{BB1} and we shall be brief on some details. Let us denote by $$\mathrm{E}=\lbcurlyrbcurly{\epsilon\in\Cc\lbrb{0,\infty};\epsilon(0)=0;\labsrabs{\epsilon(s)}>0,\,0<s<\zeta\lbrb{\epsilon}\leq\infty;\epsilon(s)=0,\,s\geq \zeta\lbrb{\epsilon}}$$ the space of excursions of the Brownian motion away from zero. $\zeta\lbrb{\epsilon}$ is called the life-time of the excursion $\epsilon\in\mathrm{E}$. Consider the jump process of $\tau$, i.e. \[\Delta_\tau=\lbcurlyrbcurly{\Delta_s}_{s\geq 0}=\twopartdef{\tau_s-\tau_{s-}}{\tau_s-\tau_{s-}>0}{0}{\tau_s-\tau_{s-}=0}\]
with $\tau_t=\sum_{s\leq t}\Delta_s$.
The process $\lbcurlyrbcurly{\lbrb{s,\Delta_s}}_{s\geq 0}$ defines a Poisson point process on $[0,\infty)\times\Rb^+$ which we expand in the following manner. Conditionally on $\Delta_s=x>0$ we sample $\epsilon_s$ from $\mathrm{E}_x=\mathrm{E}\bigcap\lbcurlyrbcurly{\epsilon\in\mathrm{E}:\,\zeta(\epsilon)=x}$ according to the measure of a Brownian meander (namely Brownian bridges conditioned not to cross zero, see \cite{DuIgMi} for more detail) of length $x>0$ which is either positive or negative with equal probability. Otherwise, when $\Delta_s=0,$ we set $\epsilon_s\equiv0$. Thus we have the process defined as follows  \[\mathrm{U}:=\lbcurlyrbcurly{\lbrb{\Delta_s,\epsilon_s}}_{s\geq 0}=\twopartdef{\lbrb{\tau_s-\tau_{s-},\epsilon_s}}{\tau_s-\tau_{s-}>0}{\lbrb{0,0}}{\tau_s-\tau_{s-}=0}.\]
The first passage time process of $\tau$ across all levels $t>0$ coincides with the local time at zero of the original Brownian motion $B$, namely $\lbcurlyrbcurly{L_t}_{t\geq 0}$. Then  $\mathrm{V}=\lbcurlyrbcurly{\lbrb{\tau_s,\epsilon_s}}_{s\geq 0}$ defines a standard Brownian motion via the definition $B'_{u}=\epsilon_{\tau_{L(u-)}}\lbrb{u-\tau_{L(u-)}}, u\geq 0$, i.e. $B'\stackrel{d}{=}B$  and vice versa, decomposing the path of $B$ into excursions away from zero via the Ito's excursion representation we can obtain $\mathrm{V}$. Recall that $\Delta^{g(t)}_1=\inf\lbcurlyrbcurly{s>0:\,\tau_s-\tau_{s-}>g(t)}$ and consider the stopped process $\mathrm{V}^t=\lbcurlyrbcurly{\lbrb{\tau_s,\epsilon_s}}_{s< \Delta^{g(t)}_1}$ and the extended process $\mathfrak{V}^t=\lbrb{\mathrm{V}^t,\Delta^{g(t)}_1,\epsilon_{\Delta^{g(t)}_1}}$. Note that from $\mathfrak{V}^t$ we can construct the Brownian motion until and including the first excursion away from zero of life-time longer than $g(t)$ and vice versa. We shall show that under $\Pb_t$ both $\mathrm{V}$ and $\mathfrak{V}_t$ have the same limit which coincides with the explicit process of the theorem. Since $\mathfrak{V}_t$ takes values in $\Dc\lbrb{0,\infty}\times \mathrm{E}^{\infty}\times\Rb\times\mathrm{E}$ and conditionally on $\lbcurlyrbcurly{\pi_h\lbrb{\tau}=\vartheta,\Delta^{g(t)}_1=s>h}$, where $\pi_h\lbrb{\tau}=\lbcurlyrbcurly{\tau_s}_{s\leq h},\,\vartheta\in\Dc\lbrb{0,h}$, the excursion process forms an independent sampling of Brownian meanders  with given lengths $\lbrb{\vartheta(u)-\vartheta(u-)}_{u\leq h}$ until time $h$, we get that for each fixed triplet $b>a>h>0, \Bc\subseteq\Fc_h$ and bounded continuous functionals $F_1:\mathrm{E}^\infty\mapsto \Rb$ restricted to all excursions up to $h$, i.e. to $\pi_h\lbrb{\epsilon}=\lbcurlyrbcurly{\epsilon_s}_{s\leq h},$ and $F_2:\mathrm{E}\mapsto \Rb$
\begin{align*}
&\Eb^t\lbrb{\ind{\pi_h\lbrb{\tau}\in\Bc}\ind{\Delta^{g(t)}_1\in\lbrb{a,b}}F_1\lbrb{\pi_h\lbrb{\epsilon}}F_2\lbrb{\epsilon_{\Delta^{g(t)}_1}}}=\\
&\int_{\Bc}\int_{a}^{b}\Eb\lbrb{F_1\lbrb{\pi_h\lbrb{\epsilon}}F_2\lbrb{\epsilon_{u}\ind{\zeta(\epsilon_u)>g(t)}}\mid \pi_h(\tau)=\vartheta;\Delta^{g(t)}_1=u}\times\\
&\frac{\Pbb{\pi_h(\tau)\in d\vartheta,\Delta^{g(t)}_1\in du;\,\Oc_t}}{\Pbb{\Oc_t}}
\end{align*}
where we have used additionally that $\ind{\Oc_t}$  is a functional of $\pi_t\lbrb{\tau}$ only. However, we have that conditionally on $\lbcurlyrbcurly{\pi_h(\tau)=\vartheta;\Delta^{g(t)}_1=u}$ the law of $\epsilon_u$ is independent of $\pi_h\lbrb{\epsilon}$ and equals in law the law of a Brownian excurion conditioned on its life-time being longer than $g(t)$, say $\epsilon^{g(t)}_u$, and  $\pi_h(\epsilon)$ equals in law $\pi_h(\epsilon^t)$, where $\epsilon^t$ is an excursion process consisting of excursions whose individual life-time does not exceed $g(t)$. Therefore,
\begin{align*}
&\Eb^t\lbrb{\ind{\pi_h\lbrb{\tau}\in\Bc}\ind{\Delta^{g(t)}_1\in\lbrb{a,b}}F_1\lbrb{\pi_h\lbrb{\epsilon}}F_2\lbrb{\epsilon_{\Delta^{g(t)}_1}}}=\\
&\int_{\Bc}\int_{a}^{b}\Eb_{\vartheta,u}\lbrb{F_1\lbrb{\pi_h\lbrb{\epsilon^t}}}\Eb\lbrb{F_2\lbrb{\epsilon^{g(t)}_{u}}}\frac{\Pbb{\pi_h(\tau)\in d\vartheta,\Delta^{g(t)}_1\in du;\,\Oc_t}}{\Pbb{\Oc_t}}.
\end{align*}
 By $\Eb_{\vartheta,u}$ we understand the expectation under sampling of Brownian meanders  given the position of their arrival (the start of the excursion of the Brownian motion away form zero) and their length. Since the measure of integration converges as $t\to\infty$ we conclude that
\begin{align*}
	&\lim\ttinf{t}\Eb^t\lbrb{\ind{\pi_h\lbrb{\tau}\in\Bc}\ind{\Delta^{g(t)}_1\in\lbrb{a,b}}F\lbrb{\pi_h\lbrb{\epsilon};\epsilon_{\Delta^{g(t)}_1}}}=\\
	&\int_{\Bc}\int_{a}^{b}\Eb_{\vartheta,u}\lbrb{F_1\lbrb{\pi_h\lbrb{\epsilon}}}\Eb\lbrb{F_2\lbrb{\epsilon^{\infty}_{u}}}\Qbb{\pi_h(\tau)\in d\vartheta,\Tc\in du},
\end{align*}
where it is proved in \cite[Proof of Thm.2]{BB1} that $\epsilon^{g(t)}$ converges to, as $t\to\infty$, to a Bessel three process with random sign, denoted here by $\epsilon^\infty$ and hence $\lim\ttinf{t}\Ebb{F_2\lbrb{\epsilon^{g(t)}_{u}}}=\Ebb{F_2\lbrb{\epsilon^\infty}}$. Also clearly since $h>0$ is fixed $$\lim\ttinf{t}\Eb_{\vartheta,u}\lbrb{F_1\lbrb{\pi_h\lbrb{\epsilon^t}}}=\Eb_{\vartheta,u}\lbrb{F_1\lbrb{\pi_h\lbrb{\epsilon}}}.$$ We note that as a consequence one obtains that $\epsilon^\infty$ is independent of $\pi_h\lbrb{\epsilon}$ conditionally on $\lbcurlyrbcurly{\pi_h(\tau)=\vartheta;\Delta^{g(t)}_1=u}$. Given the description of the joint law of $\lbrb{\pi_\Tc\lbrb{\tau},\Tc}$ under $\Qb$, see \eqref{eq:lawJoint} we conclude that the construction $B'_u=\epsilon_{\tau_{L(u-)}}\lbrb{u-\tau_{L(u-)}}, \tau_{\Tc-}\geq u\geq 0$ under $\Qb$ is nothing else but the Brownian motion with its inverse local time running up to the time of the clock $\mathfrak{C}$ conditioned on $\lbcurlyrbcurly{\tau_s>g(s),\,s\leq\mathfrak{C}}$. The process $\epsilon^\infty$ is an independent Bessel three process with a random sign. Splicing $\epsilon^\infty$ at time $\tau_{\mathfrak{C}-}$ gives the process of the theorem. The uniqueness follows from the uniqueness of the law of $\lbrb{\pi_{\Tc}\lbrb{\tau},\Tc}$ and the independence of the limit as $t\to\infty$ of $\Eb_{\vartheta,u}\lbrb{\cdot}$ above. Thus under $\Pb_t$, $\mathfrak{V}_t$ converges to the process defined in the theorem. Let us show that under $\Pb_t$, $\mathrm{V}$ converges to the same process. This follows easily from above since 
\[\lim\ttinf{b}\lim\ttinf{t}\Pbb{\Delta^{g(t)}_1>b\mid\Oc_t}=\lim\ttinf{b}\frac{\int_{b}^{\infty}\Pbb{\Oc_s}du}{\Phi(\infty)}=0\]
and since $\tau_{\Delta^{g(t)}_1}\geq g(t)\uparrow\infty,\,t\to\infty$. This completes the proof of the theorem as the construction of the Brownian motion from $\mathrm{V}$ converges thus to the process of the theorem.
\end{proof}
The proof of Corollary \ref{cor:transience} is immediate from Theorem \ref{thm:transience}.
 \section{Proofs for Section \ref{sec:recurrence}}
We prove Theorem \ref{thm:recurrent} in several steps. First we show that  $\Pbb{\tau\in\cdot\mid \Oc_t}\to \Qbb{\cdot}$ and we describe the law of $\tau$ under $\Qb$.
Recall that $\Phi^h_y(t)=\int_{0}^{t}\Pbb{\tau_{s}>g(s+h)-y,s\leq v}dv$, see the introduced notation around \eqref{eq:relationO}. Then the following claim holds:
\begin{proposition}\label{cor:densityUnderQ}
	Under $\Pb_t$ the inverse local time converges, as $t\to\infty$, to an increasing pure-jump process with law $\Qb$ which we call the inverse local time under $\Qb$.
	Under $\Qb$ the inverse local time has a density given, for $y>g(h)\vee 0$, $A>3$ and $t(A)$ such that $g(t(A))=1+2A^{-1}$, by
	\begin{align}\label{eq:densityInverseLocalTime}
	&\Qb(\tau_h\in dy):=q_h(y)\Pbb{\tau_h\in dy,\Oc_h}=\frac{\Phi^h_y\lbrb{f(Ay)\vee t(A)}}{\Phi(1)}e^{-\int_{1}^{f(Ay)\vee t(A)}\frac{2K}{\gN{s}}ds}\times\\
	\nonumber &e^{\int_{f(Ay)\vee t(A)}^{\infty}\lbrb{\frac{2K}{\sqrt{g(s+h)-y}}-\frac{2K}{\gN{s}}}ds}e^{-\int_{1}^{\infty}\rho(s)ds+\int_{f(Ay)\vee t(A)}^{\infty}\rho^h_y(s)ds}\Pbb{\tau_h\in dy,\Oc_h},
	\end{align}
	where it is part of the proof that all quantities involved are finite and $\rho^h_y(s)$, $\rho(s)$ are defined in \eqref{eq:rho} and \eqref{eq:rho_h_y}. Furthermore, for any $B\subset\Oc_h$ and $B\in\Fc_h$ we have that
	\begin{equation}\label{eq:h-transformRecurrence}
	\Qbb{B}=\Ebb{q_h(\tau_h);B}.
	\end{equation} 
	Finally, the function $q_h:\lbrb{g(h)\vee 0,\infty}\mapsto \lbrb{0,\infty}$ is non-decreasing for every $h>0.$
\end{proposition}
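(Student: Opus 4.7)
The strategy is to exploit the strong Markov property of $\tau$ at time $h$ to reduce everything to the ratio $\Pbb{\Oc_{t-h}(h,y)}/\Pbb{\Oc_t}$, then apply the sharp asymptotic $\eqref{eq:asymptoticInFiniteCase-2}$ and the ODE solution $\eqref{eq:solutionODE}$, supplying the required uniformity through Lemma $\ref{lem:uniformIntegrability}$. Throughout, fix $h>0$, $A>3$, $y>g(h)\vee 0$, and set $t_0:=f(Ay)\vee t(A)$. By independence of $\{\tau_{h+\cdot}-\tau_h\}$ from $\Fc_h$, for any $\Bc\in\Fc_h$ with $\Bc\subseteq\Oc_h$,
\begin{equation*}
\Pbb{\tau_h\in dy,\Bc,\Oc_t}=\Pbb{\Oc_{t-h}(h,y)}\,\Pbb{\tau_h\in dy,\Bc,\Oc_h},
\end{equation*}
because on $\{\tau_h=y,\,\Oc_h\}$ the event $\{\tau_s>g(s),\,h\leq s\leq t\}$ becomes $\{\tilde\tau_r>g(r+h)-y,\,0\leq r\leq t-h\}=\Oc_{t-h}(h,y)$ for the independent shifted copy $\tilde\tau_r:=\tau_{h+r}-\tau_h$. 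The whole proposition thus reduces to identifying $\lim_{t\to\infty}\Pbb{\Oc_{t-h}(h,y)}/\Pbb{\Oc_t}=:q_h(y)$ with $q_h$ as in $\eqref{eq:densityInverseLocalTime}$; $\eqref{eq:h-transformRecurrence}$ then follows by integrating against the joint law of $(\tau_h,\Bc)$.

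Applying $\eqref{eq:asymptoticInFiniteCase-2}$ to numerator and denominator gives $\Pbb{\Oc_t}\sim 2K\Phi(t)/\gN{t}$ and $\Pbb{\Oc_{t-h}(h,y)}\sim 2K\Phi^h_y(t-h)/\sqrt{g(t)-y}$, with $\gN{t}/\sqrt{g(t)-y}\to 1$. Writing $\Phi(t)$ via $\eqref{eq:solutionODE}$ with base point $1$, and $\Phi^h_y(t-h)$ via the analogous solution of the shifted ODE $\eqref{eq:generalODE}$ with base point $t_0$ (valid because $g^h_y(s)=g(s+h)-y\geq 1$ for $s\geq t_0$, as the identities $g(f(Ay))=Ay$ and $g(t(A))=1+2/A$ show), the ratio factors as
\begin{equation*}
\frac{\Phi^h_y(t_0)}{\Phi(1)}\,e^{-\int_1^{t_0}\!\tfrac{2K}{\gN{s}}ds}\cdot e^{\int_{t_0}^{t-h}\!\tfrac{2K}{\sqrt{g(s+h)-y}}ds-\int_{t_0}^{t}\!\tfrac{2K}{\gN{s}}ds}\cdot e^{\int_{t_0}^{t-h}\!\rho^h_y(s)ds-\int_{1}^{t}\!\rho(s)ds}.
\end{equation*}

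The analytical core is the passage $t\to\infty$ inside the last two exponents. For the $g$-piece, substituting $u=s+h$ in the first integral and telescoping yields
\begin{equation*}
\int_{t_0}^{t-h}\!\tfrac{2K\,ds}{\sqrt{g(s+h)-y}}-\int_{t_0}^{t}\!\tfrac{2K\,ds}{\gN{s}}=\int_{t_0+h}^{t}\!\Bigl(\tfrac{2K}{\sqrt{g(s)-y}}-\tfrac{2K}{\gN{s}}\Bigr)ds-\int_{t_0}^{t_0+h}\!\tfrac{2K\,ds}{\gN{s}}.
\end{equation*}
The new integrand is $O(y/g(s)^{3/2})=O(1/s^3)$ under $\eqref{eq:conditionGrowth}$, so the expression converges, equivalently to $\int_{t_0}^\infty(2K/\sqrt{g(s+h)-y}-2K/\gN{s})\,ds$ as an improper integral. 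For the $\rho$-piece, Lemma $\ref{lem:uniformIntegrability}$ (estimate $\eqref{eq:uniformHestimate}$) gives $|\rho^h_y(s)|,|\rho(s)|\lesssim 1/(s\ln^{1+\epsilon/2}(s))$ for $s\geq t_0$, so $\int_{t_0}^{\infty}\rho^h_y(s)\,ds$ and $\int_1^{\infty}\rho(s)\,ds$ are absolutely convergent. Regrouping via $\int_1^{t_0}\rho+\int_{t_0}^{\infty}\rho=\int_1^{\infty}\rho$ produces precisely $\eqref{eq:densityInverseLocalTime}$.

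For the remaining statements: joint convergence of $(\tau_{h_1},\ldots,\tau_{h_n})$ under $\Pb_t$ follows by iterating the Markov splitting, dominated convergence being justified by the explicit bounds just derived; since $\tau$ is monotone, this lifts to weak convergence on Skorohod space $D([0,\infty))$, which defines $\Qb$ uniquely. The limit is pure-jump because $\eqref{eq:h-transformRecurrence}$ realizes $\Qb|_{\Fc_h}$ as absolutely continuous with respect to $\Pb|_{\Fc_h}$, and under $\Pb$ the process $\tau$ is a driftless $1/2$-stable subordinator. Monotonicity of $q_h(y)$ in $y$ is transparent: $\Oc_{t-h}(h,y)=\{\tau_s>g(s+h)-y,\,s\leq t-h\}$ is an increasing family of events, so each prelimit ratio $\Pbb{\Oc_{t-h}(h,y)}/\Pbb{\Oc_t}$ is non-decreasing in $y$, and so is the pointwise limit $q_h$. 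The main obstacle is the uniform $t$-control of the $\rho$-exponents in $y,h$; Lemma $\ref{lem:uniformIntegrability}$ is engineered precisely for this, with the threshold $8/5+\epsilon$ in $\eqref{eq:conditionGrowth}$ being what makes $\rho^h_y$ integrable at infinity.
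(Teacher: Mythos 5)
Your proof follows essentially the same approach as the paper: reduce to the ratio $\Pbb{\Oc_{t-h}(h,y)}/\Pbb{\Oc_t}$ via the Markov property, replace numerator and denominator by $2K\Phi^h_y(t-h)/\sqrt{g(t)-y}$ and $2K\Phi(t)/\gN{t}$ using Theorem \ref{thm:asymptoticInFiniteCase}, plug in the ODE solutions \eqref{eq:solutionODE} and \eqref{eq:generalODE} with the base points $1$ and $t_0=f(Ay)\vee t(A)$, and pass to the limit. The formula \eqref{eq:densityInverseLocalTime} and the monotonicity argument match the paper.

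Two caveats. First, where you show the $g$-integral converges by a rough $O(y/g(s)^{3/2})=O(1/s^3)$ bound, the paper instead invokes its Lemma \ref{lem:finitenessIntegral}, which only needs the monotonicity assumption $f(x)/\sqrt{x}\downarrow 0$ (so $g(s)\geq s^2$) and moreover produces the explicit bound $f(y)/(2\sqrt{y(1-1/A)})$ later reused in the repulsion argument; your crude bound suffices for finiteness but is less portable. Second, and more substantively, your sentence ``dominated convergence being justified by the explicit bounds just derived'' glosses over the real work: the pointwise formula for $q_h(y)$ does not by itself furnish a dominating function integrable against $\Pbb{\tau_h\in dy;\Oc_h}$, and without such domination you cannot rule out mass escaping to infinity, which is precisely what could make the limit fail to be a probability measure. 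The paper handles this in a separate Lemma \ref{lem:ConvergenceOfInverseLocalTime}, constructing an explicit $r_h(y)$ majorizing the prelimit ratio uniformly in $t$ and verifying $\int r_h\,d\Pbb{\tau_h\in\cdot;\Oc_h}<\infty$, which takes a delicate split into regions $I_b$, $I_b^c$ and uses the uniform estimates \eqref{eq:uniformHestimate}, \eqref{eq:uniformComparisonToG} in both. That lemma (together with the resulting $\Qbb{\tau_h\in(g(h),\infty)}=1$) is what upgrades your pointwise density convergence to weak convergence with no mass loss. The paper's own proof of the proposition also defers this to the separate lemma, so the density formula and monotonicity parts of your proposal do track the paper faithfully; just be explicit that an integrable dominating function is not ``just derived'' but requires its own argument.
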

\begin{proof}[Proof of Proposition \ref{cor:densityUnderQ}]
	We write using the Markov property, for any $B\subset \Oc_h;\,B\in\Fc_h,$
	\begin{align*}
	&\Pbb{\tau_h\in dy;B\Big| \Oc_t}=\frac{\Pbb{\Oc_t;\,\tau_h\in dy;\,B}}{\Pbb{\Oc_t}}=\frac{\Pbb{\Oc_{t-h}(h,y)}}{\Pbb{\Oc_t}}\Pbb{\tau_h\in dy;B,\,\Oc_h},
	\end{align*}
	where we recall that $\Oc_{t-h}(h,y)=\{\tau_s>g_{y,h}(s),s\leq t-h\}$ and $g_{y,h}(t)=g(t+h)-y$. Clearly, for every $t>h$, conditionally on $\Oc_t$,  $y>g(h)\vee 0$.
	Fix $y\in\lbrb{g(h)\vee 0,\infty}$, it remains to show the following limit
	\begin{equation}\label{eq:limit}
	\lim\ttinf{t}\frac{\Pbb{\Oc_{t-h}(h,y)}}{\Pbb{\Oc_t}}=q_h(y).
	\end{equation}
	However, using equation \eqref{eq:asymptoticInFiniteCase-1} of Theorem \ref{thm:asymptoticInFiniteCase} we get that
	\[\frac{\Pbb{\Oc_{t-h}(h,y)}}{\Pbb{\Oc_t}}\sim\frac{\Phi^h_y(t-h)\sqrt{g(t)}}{\Phi(t)\sqrt{g(t-h+h)-y}}\sim\frac{\Phi^h_y(t-h)}{\Phi(t)}.\]
	Next, we employ, for $t>\lbrb{t(A)+h}\vee f(Ay)>1$, the modified solutions to \eqref{eq:generalODE}
	\[\Phi^h_y(t-h)=\Phi^h_y\lbrb{f(Ay)\vee t(A)}e^{\int_{f(Ay)\vee t(A)}^{t-h}\frac{2K}{\sqrt{g(s+h)-y}}ds+\int_{f(Ay)\vee t(A)}^{t-h}\rho^h_y(s)ds}\]
	\[\Phi(t)=\Phi(1)e^{\int_{1}^{f(Ay)\vee t(A)}\frac{2K}{\gN{s}}ds+\int_{f(Ay)\vee t(A)}^{t}\frac{2K}{\sqrt{g(s)}}ds+\int_{1}^{t}\rho(s)ds}.\]
	Clearly then, on $y>g(h)$,
	\begin{align*}&q_h(y)=\lim\ttinf{t}\frac{\Phi^h_y(t-h)}{\Phi(t)}=\\
	&\frac{\Phi^h_y\lbrb{f(Ay)\vee t(A)}}{\Phi(1)}e^{-\int_{1}^{f(Ay)\vee t(A)}\frac{2K}{\gN{s}}ds}e^{\int_{f(Ay)\vee t(A)}^{\infty}\lbrb{\frac{2K}{\sqrt{g(s+h)-y}}-\frac{2K}{\gN{s}}}ds}e^{-\int_{1}^{\infty}\rho(s)ds+\int_{f(Ay)\vee t(A)}^{\infty}\rho^h_y(s)ds},
	\end{align*}
	which proves the existence of density $q_h(y)$ with respect to $\Pbb{\tau_h\in dy,\Oc_h}$. The finiteness of $\int_{f(Ay)\vee t(A)}^{\infty}\lbrb{\frac{2K}{\sqrt{g(s+h)-y}}-\frac{2K}{\gN{s}}}ds$ follows from the axillary Lemma \ref{lem:finitenessIntegral} below whereas the finiteness of $\int_{f(Ay)\vee t(A)}^{\infty}\rho^h_y(s)ds$ follows from the bound \eqref{eq:uniformHestimate} of Lemma \ref{lem:uniformIntegrability} which holds under the assumptions for $g$ under Proposition \ref{cor:densityUnderQ}. Finally, the fact that $q_h(y)$ is non-decreasing in $y>g(h)\vee 0$ follows from the fact that for $y_2>y_1>g(h)\vee 0$ we have that for any $t>h$, $\Oc_t\lbrb{h,y_1}\subseteq\Oc_t\lbrb{h,y_2}$ since $g_{y_1,h}(t)=g(t+h)-y_1\geq g_{y_2,h}(t)=g(t+h)-y_2$.
\end{proof}
\begin{lemma}\label{lem:finitenessIntegral}
	Let $f$ satisfy condition \eqref{eq:growthCondition}, i.e. $f(x)/\sqrt{x}\downarrow 0.$ Then we have that, for any $h>0,\,y>g(h)\vee 0,\,A>3$
	\begin{align}\label{eq:finitenessIntegral}
		\nonumber\int_{f(Ay)\vee t(A)}^{\infty}\lbrb{\frac{2K}{\sqrt{g(s+h)-y}}-\frac{2K}{\gN{s}}}ds&\leq\int_{f(Ay)\vee t(A)}^{\infty}\lbrb{\frac{1}{\sqrt{g(s)-y}}-\frac{1}{\gN{s}}}ds\\
		&<\frac{f(y)}{2\sqrt{y(1-\frac{1}{A})}}.
	\end{align}
\end{lemma}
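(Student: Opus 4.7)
The claim consists of two inequalities; the first is elementary and the bulk of the work is in the second.

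For the first inequality, monotonicity of $g$ gives $g(s+h)\geq g(s)$, hence $1/\sqrt{g(s+h)-y}\leq 1/\sqrt{g(s)-y}$, and $2K=\sqrt{2/\pi}<1$. Combining these, I would write
\[
\frac{1}{\sqrt{g(s)-y}}-\frac{2K}{\sqrt{g(s+h)-y}}\geq\frac{1-2K}{\sqrt{g(s)-y}}\geq\frac{1-2K}{\sqrt{g(s)}},
\]
which after rearrangement is exactly the pointwise version of the first inequality in \eqref{eq:finitenessIntegral}. Integration over $[f(Ay)\vee t(A),\infty)$ then delivers the claimed majorisation.

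For the second (main) inequality I would combine two elementary ingredients. The mean-value bound $\frac{1}{\sqrt{a}}-\frac{1}{\sqrt{b}}=\int_{a}^{b}\frac{dt}{2t^{3/2}}\leq\frac{b-a}{2a^{3/2}}$, applied with $a=g(s)-y$ and $b=g(s)$, together with the observation that $s\geq f(Ay)\vee t(A)$ forces $g(s)\geq Ay$ and hence $g(s)-y\geq g(s)(1-1/A)$, yields the pointwise bound
\[
\frac{1}{\sqrt{g(s)-y}}-\frac{1}{\sqrt{g(s)}}\leq\frac{y}{2(1-1/A)^{3/2}g(s)^{3/2}}.
\]
Next, the hypothesis \eqref{eq:growthCondition} that $x\mapsto f(x)/\sqrt{x}$ decreases to $0$ is, via $s=f(g(s))$, equivalent to $s\mapsto \sqrt{g(s)}/s$ being \emph{increasing}. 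Consequently, for any $s\geq s_{0}:=f(Ay)\vee t(A)$ one has $g(s)\geq s^{2}g(s_{0})/s_{0}^{2}$, and a direct computation gives
\[
\int_{s_{0}}^{\infty}\frac{ds}{g(s)^{3/2}}\leq\frac{s_{0}^{3}}{g(s_{0})^{3/2}}\int_{s_{0}}^{\infty}\frac{ds}{s^{3}}=\frac{s_{0}}{2g(s_{0})^{3/2}}.
\]

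To close the argument, I would set $s_{0}=f(Ay)$ (the case $s_{0}=t(A)>f(Ay)$ only shrinks the domain of integration and the same estimates apply), so that $g(s_{0})=Ay$, and invoke the companion inequality $f(Ay)\leq\sqrt{A}f(y)$, which is again a direct consequence of $f(x)/\sqrt{x}\downarrow$. Multiplying the two displays together produces
\[
\int_{s_{0}}^{\infty}\!\lbrb{\frac{1}{\sqrt{g(s)-y}}-\frac{1}{\sqrt{g(s)}}}ds\leq\frac{y}{2(1-1/A)^{3/2}}\cdot\frac{\sqrt{A}f(y)}{2(Ay)^{3/2}}=\frac{f(y)}{4A(1-1/A)^{3/2}\sqrt{y}},
\]
and I would finally compare this with the target $\frac{f(y)}{2\sqrt{y(1-1/A)}}$; the comparison reduces to the elementary inequality $2(A-1)>1$, which holds strictly for $A>3$. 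The only real obstacle is the algebraic bookkeeping in this last step: one must ensure that the numerical constants compress correctly and that the strict inequality in the lemma (rather than merely $\leq$) is preserved throughout.
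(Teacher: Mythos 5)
Your proposal is correct, and the first step (dropping the factor $2K<1$ together with $g(s+h)\geq g(s)$ to reduce to $\int(\frac{1}{\sqrt{g(s)-y}}-\frac{1}{\sqrt{g(s)}})ds$) matches the paper's. Where you genuinely diverge is in the estimate of the remaining integral. The paper first bounds the integrand by $\frac{y}{g^{3/2}(s)\sqrt{1-A^{-1}}}$ via $1-\sqrt{1-x}\leq x$, enlarges the domain of integration down to $f(y)$, then changes variables $s=f(u)$ to rewrite $\int_{f(y)}^\infty g^{-3/2}(s)\,ds=\int_y^\infty f'(u)u^{-3/2}\,du$ and integrates by parts, finally invoking $f(u)/\sqrt{u}\downarrow$ to control the boundary term and the residual integral $\int_y^\infty f(u)u^{-5/2}\,du$. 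You instead bound the integrand by the mean-value estimate $\frac{y}{2(1-1/A)^{3/2}g^{3/2}(s)}$, then use the equivalent reformulation of \eqref{eq:growthCondition}, namely $\sqrt{g(s)}/s\uparrow$, to minorise $g(s)\geq s^2g(s_0)/s_0^2$ and compute $\int_{s_0}^\infty g^{-3/2}(s)\,ds\leq s_0/(2g(s_0)^{3/2})$ by an explicit power-law integral, afterwards using the subadditivity-type bound $f(Ay)\leq\sqrt{A}f(y)$ to pass from $f(Ay)$ to $f(y)$. Your route avoids both the change of variables and the integration by parts, which makes it more elementary and arguably cleaner; the price is that the connection to the integral test $I(f)=\int f(s)s^{-3/2}\,ds$, which the paper's IBP exhibits explicitly, is no longer visible. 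Both arguments yield the stated strict bound, and your final numerical comparison $2(A-1)>1$ is correct (it in fact requires only $A>3/2$, well within the hypothesis $A>3$).
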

\begin{proof}[Proof of Lemma \ref{lem:finitenessIntegral}]
	Fix $h>0,y>g(h)\vee 0$. Then since for $g(s)>g(f(Ay))>Ay>3y$ and the general inequality holds $1-\sqrt{1-x}\leq x,x\in(0,1)$, we have that
\begin{align*}
&\int_{f(Ay)\vee t(A)}^{\infty}\lbrb{\frac{1}{\sqrt{g(s+h)-y}}-\frac{1}{\gN{s}}}ds\leq \int_{f(Ay)\vee t(A)}^{\infty}\lbrb{\frac{1}{\sqrt{g(s)-y}}-\frac{1}{\gN{s}}}ds=\\
&\int_{f(Ay)\vee t(A)}^{\infty}\frac{1-\sqrt{1-\frac{y}{g(s)}}}{\sqrt{g(s)-y}}ds\leq y\int_{f(Ay)\vee t(A)}^{\infty}\frac{1}{g^{3/2}(s)\sqrt{1-\frac{y}{g(s)}}}ds\leq \\
&\frac{y}{\sqrt{1-A^{-1}}}\int_{f(y)}^{\infty}\frac{ds}{g^{3/2}(s)}=\frac{y}{\sqrt{1-A^{-1}}}\int_{y}^{\infty}\frac{f'(s)}{s^{3/2}}ds=\frac{y}{\sqrt{1-A^{-1}}} \frac{f(s)}{s^{3/2}}\Big|^\infty_y+\\
&\frac{3y}{2\sqrt{1-A^{-1}}} \int_{y}^{\infty}\frac{f(s)}{s^{5/2}}ds\leq -\frac{f(y)}{\sqrt{y(1-A^{-1})}}+\frac{3f(y)}{2\sqrt{y(1-A^{-1})}},
\end{align*}
where for the last line we have also used that $f(s)/\sqrt{s}$ is decreasing, i.e. condition \eqref{eq:growthCondition}.
\end{proof}

The proof  of Theorem \ref{thm:recurrent} follows several steps. Fix $h>0$ and we will first prove the following result.
\begin{lemma}\label{lem:ConvergenceOfInverseLocalTime}
Let $f$ satisfy the usual conditions and \eqref{eq:conditionGrowth} holds, i.e. $\liminf\ttinf{t}g(t)/t^2\ln^{8/5+\epsilon}(t)=\infty$. Then for any fixed $h>0$ and all $t>t(A)$ we have that
\begin{equation}\label{eq:qDCT}
	\frac{\Pbb{\Oc_{t-h}(h,y)}}{\Pbb{\Oc_t}}\Pbb{\tau_h\in dy,\,\Oc_h}\leq r_h(y)\text{, $\forall y>g(h)\vee 0$}
\end{equation}
and $\int_{y>g(h)\vee 0}r_h(y)\Pbb{\tau_h\in dy,\,\Oc_h}<\infty$. Consequently, $\forall h>0,$
\begin{equation}\label{eq:Recurrance}
\Qbb{\tau_h\in(g(h),\infty)}=1.
\end{equation}
\end{lemma}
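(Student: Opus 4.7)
My strategy is to establish the uniform domination \eqref{eq:qDCT} by writing $\Pbb{\Oc_{t-h}(h,y)}/\Pbb{\Oc_t}$ as the explicit product emerging from the solution formula \eqref{eq:solutionODE} of the ODE \eqref{eq:generalODE}, and then bound each resulting factor in a way that is uniform in $t$ and integrable in $y$ against $\Pbb{\tau_h \in dy,\Oc_h}$ by invoking Lemmas \ref{lem:uniformIntegrability} and \ref{lem:finitenessIntegral}. Once this majorant $r_h$ is secured, the recurrence \eqref{eq:Recurrance} follows from the dominated convergence theorem applied to the identity $\Pbb{\tau_h \in dy \mid \Oc_t} = \lbrb{\Pbb{\Oc_{t-h}(h,y)}/\Pbb{\Oc_t}}\Pbb{\tau_h \in dy, \Oc_h}$, because the left-hand side assigns unit mass to $y > g(h)\vee 0$ for every $t > h$.

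First, \eqref{eq:asymptoticInFiniteCase-1} reduces the ratio to $(\Phi^h_y(t-h)/\Phi(t))\sqrt{g(t)/(g(t)-y)}$, whose second factor is uniformly bounded by $\sqrt{A/(A-1)}$ once $t \geq f(Ay)\vee t(A)$ by \eqref{eq:estimateG}. Applying \eqref{eq:solutionODE} to $\Phi^h_y$ at the reference point $t_0(y):= f(Ay)\vee t(A)$ and to $\Phi$ at $1$, the ratio factors explicitly into (i) the initial quotient $\Phi^h_y(t_0(y))/\Phi(1) \leq t_0(y)/\Phi(1)$; (ii) a cancellation factor $\exp\lbrb{-\int_1^{t_0(y)} 2K/\sqrt{g(s)}\,ds}$ arising from the mismatched integration windows; (iii) an exchange factor $\exp\lbrb{\int_{t_0(y)}^{t-h}(2K/\sqrt{g(s+h)-y} - 2K/\sqrt{g(s)})\,ds}$, uniformly bounded by $\exp(Cf(y)/\sqrt{y})$ thanks to Lemma \ref{lem:finitenessIntegral}; and (iv) a $\rho$-remainder $\exp\lbrb{\int_{t_0(y)}^\infty|\rho^h_y(s)|\,ds + \int_1^\infty|\rho(s)|\,ds}$, which by \eqref{eq:uniformHestimate} is at most $\exp(C(A)(1+1/(f(y)-h)))$. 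The product of these, multiplied by $\Pbb{\tau_h \in dy, \Oc_h}$, defines $r_h(y)$.

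The main obstacle is the integrability $\int r_h(y)\Pbb{\tau_h \in dy,\Oc_h} < \infty$. The tail is easy: for large $y$ the polynomial-in-$y$ prefactor $t_0(y)$ is wiped out by the super-polynomial decay of (ii), since $I(f)=\infty$ forces $\int_1^{f(Ay)}2K/\sqrt{g(s)}\,ds \to \infty$, and combined with the $y^{-3/2}$-decay of $\Pbb{\tau_h \in dy}$ from \eqref{eq:lawTau} this leaves a comfortably integrable tail. The delicate point is the boundary $y\downarrow g(h)\vee 0$ in the regime $h>f(0)$: the factor $1/(f(y)-h)$ in (iv) diverges, and to absorb $\exp(C/(f(y)-h))$ one has to show that the joint density $\Pbb{\tau_h\in dy,\Oc_h}$ vanishes at least at the same exponential rate as $y\downarrow g(h)$, i.e.\ a stable-$\tfrac12$ bridge of length $h$ terminating too close to $g(h)$ has exponentially small probability of remaining above the curve $g$ throughout $[0,h]$; this likely calls for a sharper, $y$-localised variant of Lemma \ref{lem:uniformIntegrability} near the boundary. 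Granting that estimate, dominated convergence applied to \eqref{eq:limit} of Proposition \ref{cor:densityUnderQ} yields
\[
1 = \lim_{t\to\infty}\Pbb{\tau_h > g(h)\vee 0 \mid \Oc_t} = \int_{g(h)\vee 0}^\infty q_h(y)\Pbb{\tau_h \in dy,\Oc_h} = \Qbb{\tau_h\in (g(h),\infty)},
\]
which is \eqref{eq:Recurrance}.
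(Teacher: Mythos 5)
Your high-level plan—solve the ODE, factor the ratio, control the pieces via Lemmas \ref{lem:uniformIntegrability} and \ref{lem:finitenessIntegral}, then apply dominated convergence—is the strategy the paper follows, but there are two genuine gaps. First, every bound you invoke (the $\sqrt{A/(A-1)}$ from \eqref{eq:estimateG}, the $\rho$-estimates from \eqref{eq:uniformHestimate}) is valid only for $t\geq f(Ay)\vee t(A)$; for a dominating function valid at each fixed large $t$ and all $y>g(h)\vee 0$ you must also handle the region $y\gtrsim g(t)/A$, where $t<f(Ay)$ and none of those estimates apply. The paper introduces the cutoff $I_b=\{y>g(t)w_b(t)-h\}$ with $w_b(t)=e^{-2b\int_1^t g(s)^{-1/2}ds}$, uses the crude bound $\Pbb{\Oc_{t-h}(h,y)}/\Pbb{\Oc_t}\leq 1/\Pbb{\Oc_t}\sim 2K\sqrt{g(t)}/\Phi(t)$ on $I_b$, and pairs it with the tail $\Pbb{\tau_h\geq g(t)w_b(t)-h}$ together with the observation that $\sqrt{w_b(t)}\,\Phi(t)\to\infty$ (precisely because $I(f)=\infty$). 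This produces the constant majorant $r^1_h$. You omit this step entirely.

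Second, you flag the divergence of $\exp(C/(f(y)-h))$ as $y\downarrow g(h)\vee 0$ as an obstacle and say one would need a "sharper, $y$-localised variant of Lemma \ref{lem:uniformIntegrability}." No such sharpening is needed, and this is where your proposal is incomplete since you grant the missing estimate rather than prove it. The ratio $\Pbb{\Oc_{t-h}(h,y)}/\Pbb{\Oc_t}$ is non-decreasing in $y$ (the monotonicity of $q_h$ recorded in Proposition \ref{cor:densityUnderQ}, which follows at once from $\Oc_t(h,y_1)\subseteq\Oc_t(h,y_2)$ for $y_1<y_2$), so one may immediately restrict attention to $y>g(h+1)$, where $f(y)-h\geq 1$ and the boundary singularity is gone; the strip $(g(h)\vee 0,g(h+1)]$ contributes a bounded amount since the ratio there is dominated by its value at $g(h+1)$ and the reference measure is finite. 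A smaller issue: your claim that factor (ii) yields "super-polynomial decay" is false in general—for $f(t)=\sqrt{t}/\ln t$, $\int_1^{f(Ay)}2K/\sqrt{g(s)}\,ds\sim K\ln\ln y$, so (ii) decays only like $(\ln y)^{-K}$. Integrability still holds, but the transparent reason, and the one the paper uses, is that the $y$-integrand is a perfect derivative: with $\alpha(u)=Kf(u)/u^{3/2}$,
\[
\int_1^\infty f(u)\,e^{-K\int_{g(1)}^{u}\frac{f(s)}{s^{3/2}}ds}\,\frac{du}{u^{3/2}}=\frac{1}{K}\int_1^\infty\alpha(u)\,e^{-\int_{g(1)}^{u}\alpha(s)\,ds}\,du=-\frac{1}{K}\,e^{-\int_{g(1)}^{u}\alpha(s)\,ds}\Big|_{1}^{\infty}<\infty.
\]
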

\begin{proof}[Proof of Lemma \ref{lem:ConvergenceOfInverseLocalTime}]
	Since according to Proposition \ref{cor:densityUnderQ} we have that $\frac{\Pbb{\Oc_{t-h}(h,y)}}{\Pbb{\Oc_t}}:\lbrb{g(h)\vee 0,\infty}\mapsto \lbrb{0,\infty}$ is increasing in $y$ we get that it suffices to consider only $y>g(h+1)>g(1)>1$.
	
	We start with the proof of \eqref{eq:qDCT}.  Choose $A>3$ and introduce the region $I_1=\{t\leq f(Ay+h)\}=\{y\geq \frac{g(t)}A-h\}$. Next, introduce the functions
	\[w_b(t)=e^{-2b\int_{1}^{t}\frac{1}{\gN{s}}ds}, \text{ $0<b<1$.}\]
Note that from \eqref{eq:solutionODE} when $\liminf\ttinf{t}g(t)/t^2\ln^{8/5+\epsilon}(t)=\infty$ holds we have from \eqref{eq:uniformHestimate} of Lemma \ref{lem:uniformIntegrability} that $\int_{ t(A)}^{\infty}\rho(s)ds<\infty$
\begin{equation}\label{eq:choicew_b}
\sqrt{w_b(t)}\Phi(t)\asymp\Phi(t(A))e^{-b\int_{1}^{t}\frac{1}{\gN{s}}ds+\int_{t(A)}^{t}\frac{2K}{\sqrt{g(s)}}ds+\int_{t(A)}^{t}\rho(s)ds}\asymp e^{(1-b)\int_{1}^{t}\frac{2K}{\gN{s}}ds}\stackrel{\ttinf{t}}\to\infty
\end{equation}
since \eqref{eq:finitenessPhi} does not hold, i.e. 	$\int_{1}^{\infty}\frac{2K}{\gN{s}}ds=\infty$. We define the region $I_b=\{y>g(t)w_b(t)-h\}$ and since $w_b(t)\geq \frac1A$ once $t\geq t(b,A)$ is big enough we get for all those $t\geq t(b,A)$ that $I_1\subseteq I_b$. Then the trivial estimate
\begin{align}\label{eq:estimateRatioBigRegion}
\frac{\Pbb{\Oc_{t-h}(h,y)}}{\Pbb{\Oc_t}}\leq \frac{1}{\Pbb{\Oc_t}}\sim \frac{2K\sqrt{g(t)}}{\Phi(t)},
\end{align}
where we have used Theorem \ref{thm:asymptoticInFiniteCase} and \eqref{eq:asymptoticInFiniteCase-1}. However this from \eqref{eq:estimateRatioBigRegion} leads to 
\begin{align}\label{eq:DCTBigRegion}
&\int_{I_b}\frac{\Pbb{\Oc_{t-h}(h,y)}}{\Pbb{\Oc_t}}\Pbb{\tau_h\in dy;A}\lesssim \frac{\gN{t}}{\Phi(t)}\Pbb{\tau_h\geq g(t)w_b(t)-h}=\\
\nonumber& \frac{\gN{t}}{\Phi(t)}\Pbb{\tau_1\geq \frac{g(t)w_b(t)}{h^2}-\frac1h}\leq C(A,h,b)\frac{1}{\Phi(t)\sqrt{w_b(t)}}\leq C(A,h,b):=r^1_h(y)
\end{align}
once $t$ is big enough since \eqref{eq:choicew_b} holds. We conclude that to use DCT we can restrict our attention to the region
$I^c_b=\Rb^+\setminus I_b\subset\{t\geq f(Ay+h)\}=\{y\leq \frac{g(t)}A-h\}$ once $t$ is big enough. Consider  $I^c_b\cap\{y>g(h+1)\}$. Then from  \eqref{eq:uniformHestimate} we get that the solution of \eqref{eq:generalODE} for $t\geq f(Ay)\vee t(A)$ is bounded in the following way uniformly in $y$ and $t$
\begin{align*}
&\Phi^h_y(t)=\Phi^h_y(f(Ay+h))e^{\int_{f(Ay+h)}^{t}\frac{2K}{\sqrt{\lbrb{g(s+h)-y}\vee 1}}ds+\int_{f(Ay+h)}^{t}\rho^h_y(s)ds}\leq\\
& C_1(A,h)\Phi^h_y(f(Ay+h))e^{\int_{f(Ay+h)}^{t}\frac{2K}{\sqrt{\lbrb{g(s+h)-y}\vee 1}}ds}
\end{align*}
since 
\[\int_{f(Ay+h)}^{t}\rho^h_y(s)ds\leq \frac{C(A,h)}{f(y)-h}\int_{e}^{\infty}\frac{ds}{s\ln^{1+\epsilon}(s)}<\frac{C(A,h)}{f(g(h+1))-h}\leq C(A,h).\]
Furthermore, since the elementary $\Phi^h_y(f(Ay+h))\leq f(Ay+h)$ holds, we get the following inequality
\begin{equation}\label{eq:boundPhi}
\Phi^h_y(t)\leq C(A,h)f(Ay+h)e^{\int_{f(Ay+h)}^{t}\frac{2K}{\sqrt{\lbrb{g(s+h)-y}}}ds}.
\end{equation}	
We then use again \eqref{eq:solutionODE} i.e. $\Pbb{\Oc_t}\sim 2K\Phi(t)/\gN{t}$; \eqref{eq:generalODE} to express 
\[\Pbb{\Oc_{t-h}(h,y)}=\frac{2K\Phi^h_y(t-h)}{\sqrt{\lbrb{g(t)-1}\vee 1}}+\Phi^h_y(t-h)\rho^h_y(t-h)=\Phi^h_y(t-h)\lbrb{\frac{2K}{\sqrt{\lbrb{g(t)-y}}}+\rho^h_y(t-h)}\]
once $t>t(A)$ since then $g(t)-y>g(f(Ay+h))=(A-1)y+h>(A-1)g(h+1)+h>A-1>1$ on $I^c_b\cap\lbcurlyrbcurly{y>g(h+1)}$. Simple substitution then yields for $t$ big enough snd $y\in I^c_b\cap\lbcurlyrbcurly{y>g(h+1)}$ since $1-y/g(t)>1-1/A$
\begin{align*}
\frac{\Pbb{\Oc_{t-h}(h,y)}}{\Pbb{\Oc_t}}\leq& C(A,h)\frac{\Phi^h_y(t-h)}{\Phi(t)}\lbrb{\frac{\gN{t}}{\sqrt{g(t)-y}}+\frac{\sqrt{g(t)}\rho^h_y(t-h)}{2K}}=\\
&C(A,h)\lbrb{\frac{\Phi^h_y(t-h)}{\Phi(t)\sqrt{1-y/g(t)}}+\sqrt{g(t)-y}\frac{H^h_y(t-h)}{2K\Phi(t)}}\leq\\
&C(A,h)\frac{\Phi^h_y(t-h)}{\Phi(t)}+C(A,h)\sqrt{g(t)-y}\frac{H^h_y(t-h)}{2K\Phi(t)}.
\end{align*}
Next, since \eqref{eq:uniformComparisonToG} holds with $g_{y,h}$ too and $y\in I^c_b\cap\lbcurlyrbcurly{y>g(h+1)}$ hold we estimate that 
\[\frac{\sqrt{g(t)-y}H^h_y(t-h)}{\Phi(t)}\leq C(A,h)u(t-h)\Phi^h_y(t-h)=o(1)\frac{\Phi^h_y(t-h)}{\Phi(t)}.\]
Therefore, we need to consider only the ratio $\frac{\Phi^h_y(t-h)}{\Phi(t)}$ above. We use \eqref{eq:boundPhi} to estimate the numerator and \eqref{eq:solutionODE} with $t_0=1$ and \eqref{eq:uniformHestimate} of Lemma \ref{lem:uniformIntegrability} for the case $h=y=0$ to get that
\[\Phi(t)=\Phi(1)e^{\int_{1}^{t}\frac{2K}{\sqrt{g(s)}}ds+\int_{1}^{t}\rho(s)ds}\leq Ce^{\int_{1}^{t}\frac{2K}{\sqrt{g(s)}}ds}.\]
Thus, on $I^c_b\cap\lbcurlyrbcurly{y>g(h+1)}$
\begin{align*}
\frac{\Pbb{\Oc_{t-h}(h,y)}}{\Pbb{\Oc_t}}&\leq C(A,h)f(Ay+h)e^{\int_{f(Ay+h)}^{t-h}\frac{2K}{\sqrt{\lbrb{g(s+h)-y}}}ds-\int_{1}^{t}\frac{2K}{\gN{s}}ds}\\
&\leq f(Ay+h)e^{-\int_{1}^{f(Ay+h)+h}\frac{2K}{\gN{s}}ds}e^{\int_{f(Ay+h)+h}^{t}\lbrb{\frac{2K}{\sqrt{\lbrb{g(s)-y}}}-\frac{2K}{\gN{s}}}ds}\\
&\leq C\lbrb{A,h}f(Ay+h)e^{-\int_{1}^{f(Ay+h)+h}\frac{2K}{\gN{s}}ds},
\end{align*}
where for the last exponent we have used \eqref{eq:finitenessIntegral} of Lemma \ref{eq:finitenessIntegral}.
We estimate
\begin{align*}
&f(Ay+h)e^{-\int_{1}^{f(Ay+h)+h}\frac{2K}{\gN{s}}ds}\leq f(Ay+h)e^{-\int_{1}^{f(Ay+h)}\frac{2K}{\gN{s}}ds}=f(Ay+h)e^{-\int_{g(1)}^{Ay+h}\frac{2Kf'(s)}{\sqrt{s}}ds}=\\
&f(Ay+h)e^{-K\int_{g(1)}^{Ay+h}\frac{f(s)}{s^{3/2}}ds- \frac{2Kf(s)}{\sqrt{s}} \Big|_{g(1)}^{Ay+h}}\leq  Cf(Ay+h)e^{-K\int_{g(1)}^{Ay+h}\frac{f(s)}{s^{3/2}}ds}.
\end{align*}
to get on $I^c_b\cap\lbcurlyrbcurly{y>g(h+1)}$
\begin{equation}\label{eq:rh}
\frac{\Pbb{\Oc_{t-h}(h,y)}}{\Pbb{\Oc_t}}\leq Cf(Ay+h)e^{-K\int_{g(1)}^{Ay+h}\frac{f(s)}{s^{3/2}}ds}:=r^2_h(y).
\end{equation}
We set generously $r_h=\lbrb{r^1_h+r^2_h}\ind{y>g(h+1)}$, see\eqref{eq:DCTBigRegion} and \eqref{eq:rh}. However, since $\frac{\Pbb{\Oc_{t-h}(h,y)}}{\Pbb{\Oc_t}}$ is non-decreasing in $y$ and the definition of $r_h$ does not depend on $t$, we could extend without loss of generality to $r_h=\lbrb{r^1_h+r^2_h}\ind{y>g(h)\vee 0}$ as an upper bound and thus get \eqref{eq:qDCT}.
 We have trivially that
\[1=\Pbb{\tau_h\in (g(h),\infty)\Big| \Oc_t}=\int_{g(h)}^{\infty}\frac{\Pbb{\Oc_{t-h}(h,y)}}{\Pbb{\Oc_t}}\Pbb{\tau_h\in dy,\,\Oc_h}.\]

Clearly then using that $\Pbb{\tau_h\in dy,\Oc_h}\leq \Pbb{\tau_h\in dy}=\Pbb{h^2\tau_1\in dy}\leq Cdy/y^{3/2}$, see \eqref{eq:lawTau}, to use the DCT, i.e. show that $\int_{g(h)}^{\infty}r_h(y)\Pbb{\tau_h\in dy,\,\Oc_h}<\infty$ and hence \eqref{eq:Recurrance} we only need to check that
\begin{align*}
& \int_{1}^{\infty}f(Ay+h)e^{-K\int_{g(1)}^{Ay+h}\frac{f(s)}{s^{3/2}}ds}\frac{dy}{y^{3/2}}\leq C(A) \int_{1}^{\infty}f(Ay+h)e^{-K\int_{g(1)}^{Ay+h}\frac{f(s)}{s^{3/2}}ds}\frac{d(Ay+h)}{(Ay+h)^{3/2}}\leq \\
&C(A)\int_{1}^{\infty}f(u)e^{-\int_{g(1)}^{u}\frac{Kf(s)}{s^{3/2}}ds}\frac{du}{u^{3/2}}<\infty
\end{align*}
However, with $\alpha(u)=K f(u)/u^{3/2}$ the integral above can be represented up to a multiplicative constant as
\[\int_{1}^{\infty}\alpha(u)e^{-\int_{g(1)}^{u}\alpha(s)ds}du=-e^{-\int_{g(1)}^{u}\alpha(s)ds}\Big|^{\infty}_{1}<\infty\]
we conclude  \eqref{eq:Recurrance}.

\end{proof}

\begin{proof}[Proof of Theorem \ref{thm:recurrent} and Corollary \ref{cor:densityUnderQ}]
The fact that \eqref{eq:Recurrance} holds implies the statement that any possible weak limit is recurrent as there is no loss of mass at infinity. Similarly \eqref{eq:densityInverseLocalTime} is a consequence of the proof of the limit $q_h(y)$ in Lemma \ref{lem:ConvergenceOfInverseLocalTime}. Moreover, we see that the limit for inverse local time always has the same law. Given that conditional on the size of the jump we fill a Brownian excursion conditioned to have the same length we see that we can in fact pathwise construct the same process so the the limit exists and it is unique. These considerations with the splicing of excursions are even simpler here compared to the transient case as we do not have appearance of explosion time and an infinite excursion. The recurrence follows since the inverse local time does not explode, i.e. under $\Qb$ the process returns to zero with probability one after each time $T>0$. This concludes the proof.
\end{proof}
\begin{proof}[Proof of Theorem \ref{thm:repulsion}]
We observe that $w\in R_g\iff$
\begin{equation}\label{eq:Rg}
\lim\ttinf{h}\Qb\lbrb{\tau_h\in\lbrb{g(h),w(h)g(h)}}=\lim\ttinf{h}\int_{g(h)}^{g(h)w(h)}q_h(y)\Pbb{\tau_h\in dy;\,\Oc_h}=0.
\end{equation}
Given the expression for $q_h(y)$, see  \eqref{eq:densityInverseLocalTime}, we note that thanks to Lemma \ref{lem:finitenessIntegral} we have that 
\[\int_{f(Ay)\vee t(A)}^{\infty}\lbrb{\frac{1}{\sqrt{g(s+h)-y}}-\frac{1}{\gN{s}}}ds<\infty\]
and thanks to Lemma \ref{lem:uniformIntegrability} with $h=y=0$, $\int_{1}^{\infty}\rho(s)ds<\infty$. Thus, choosing $h$ big enough that $f(Ay)>f(Ag(h))>t(A)$
\begin{equation}\label{eq:qAsymp}
q_h(y)\asymp\Phi^h_y\lbrb{f(Ay)}e^{-\int_{1}^{f(Ay)}\frac{2K}{\gN{s}}ds}e^{\int_{f(Ay)}^{\infty}\rho^h_y(s)ds},
\end{equation}
which thanks to \eqref{eq:uniformHestimate} of Lemma \ref{lem:uniformIntegrability} once $y>g(h+1)$ is augmented to 
\begin{equation}\label{eq:qh}
q_h(y)\asymp\Phi^h_y\lbrb{f(Ay)}e^{-\int_{1}^{f(Ay)}\frac{2K}{\gN{s}}ds}
\end{equation}
since $\int_{1}^{\infty}\rho^h_y(s)ds\leq C(A)/(f(g(h+1))-h)=C(A)$. 
To prove \eqref{eq:Rg} we first consider
\begin{equation}\label{eq:Rg1}
\int_{g(h)}^{g(h+1)}q_h(y)\Pbb{\tau_h\in\,dy;\,\Oc_h}\leq q_h\lbrb{h+1}\Pbb{\Oc_h}\stackrel{h\to\infty}{\sim} q_h\lbrb{h+1}\frac{2K\Phi(h)}{\gN{h}}.
\end{equation}
However, from \eqref{eq:qh}, the trivial $\Phi\lbrb{f(Ag(h+1))}\leq f(Ag(h+1))\leq \sqrt{A}(h+1)$ since \eqref{eq:growthCondition} holds and $f(Ag(h+1))>h$, we get that
$$q_h(h+1)\lesssim h e^{-\int_{1}^{h}\frac{2K}{\gN{s}}ds}.$$
 The relation $\Phi(h)\asymp e^{\int_{1}^{h}\frac{2K}{\gN{s}}}ds$ coming from \eqref{eq:asymptoticInFiniteCase-2} easily implies that  $$q_h\lbrb{h+1}\frac{2K\Phi(h)}{\gN{h}}\lesssim \frac{h}{\sqrt{g(h)}}=o(1).$$ Thus the portion \eqref{eq:Rg1} never contributes to \eqref{eq:Rg}. We are then free to use the asymptotic relation \eqref{eq:qAsymp} for the interval $y\in\lbrb{g(h),w(h)g(h)}:=I$ which we split into $I_1=\lbrb{g(h),20g(h)}$ and $I_2=I\setminus I_1$. We then get that \eqref{eq:Rg} can be checked on $I_1,I_2$.
Let us start with $I_1$. We get using $\Phi^h_y\lbrb{f(Ay)}\leq f\lbrb{Ay}\leq \sqrt{A}f(y)$, since \eqref{eq:growthCondition} holds, that 
\begin{align*}
\int_{I_1}q_h(y)\Pbb{\tau_h\in dy;\,\Oc_h}&\lesssim \int_{I_1}f(y)e^{-\int_{1}^{f(Ay)}\frac{2K}{\gN{s}}ds}\Pbb{\tau_h\in dy;\,\Oc_h}\\
&\leq f(20g(h))e^{-\int_{1}^{f(g(h))}\frac{2K}{\gN{s}}ds}\Pbb{\Oc_h}\lesssim \frac{h\Phi(h)}{\sqrt{g(h)}}e^{-\int_{1}^{f(g(h))}\frac{2K}{\gN{s}}ds}\\
&\asymp \frac{h}{\gN{h}}=o(1),
\end{align*} 
where for the last relations we have used the asymptotic relations \eqref{eq:asymptoticInFiniteCase-1}, \eqref{eq:asymptoticInFiniteCase-2}. Therefore, the portion on $I_1$ is always negligible and we obtain thanks to Lemma \ref{lem:strongRepulsion} and \eqref{eq:qh} that  
\begin{align}\label{eq:Rg2}
&w\in R_g\iff\\
\nonumber&\lim\ttinf{h}\Qb\lbrb{\tau_h\in\lbrb{20g(h),w(h)g(h)}}=\lim\ttinf{h}\int_{20}^{w(h)}q_h(yg(h))\Pbb{\tau_h\in g(h)dy;\,\Oc_h}\\
\nonumber &\lim\ttinf{h}\Pbb{\Oc_h}\int_{20}^{w(h)}q_h(g(h)y)\frac{dy}{y^{3/2}}=\lim\ttinf{h}\Phi(h)\int_{20g(h)}^{w(h)g(h)}\Phi^h_y\lbrb{f(Ay)}e^{-\int_{1}^{f(Ay)}\frac{2K}{\gN{s}}ds}\frac{dy}{y^{3/2}}=0.
\end{align}  
 Let us first show that the convergences in \eqref{eq:repulsionRegion} is sufficient for $w\in R_g$. So far it was supposed that $A>3$. Choose $A\in\lbrb{3,20}$. Then using the inequality $\Phi^h_y\lbrb{f(Ay)}\leq f(Ay)$ and the change of variables $Ay\mapsto u$ we get easily that
 \begin{align}\label{eq:sufficentRep}
 	\nonumber&\lim\ttinf{h}\Phi(h)\int_{20g(h)}^{w(h)g(h)}\Phi^h_y\lbrb{f(Ay)}e^{-\int_{1}^{f(Ay)}\frac{2K}{\gN{s}}ds}\frac{dy}{y^{3/2}}\leq\\
 	&\sqrt{A}\lim\ttinf{h}\Phi(h)\int_{g(h)}^{w(h)g(h)}f(u)e^{-\int_{1}^{f(u)}\frac{2K}{\gN{s}}ds}\frac{du}{u^{3/2}}.
 \end{align}
An integration by parts gives us
\[\int_{1}^{f(y)}\frac{1}{\gN{s}}ds=\frac{f(y)}{\sqrt{y}}-1+\frac{1}{2}\int_{1}^{y}\frac{f(s)}{s^{3/2}}ds\sim \frac{1}{2}\int_{1}^{y}\frac{f(s)}{s^{3/2}}ds \]
since $f(s)/\sqrt{s}\downarrow 0$, see \eqref{eq:growthCondition} and $I(f)=\int_{1}^{\infty}\frac{f(s)}{s^{3/2}}ds=\infty$.
 Then, clearly,
\begin{align*}
\int_{y=g(h)}^{w(h)g(h)}f(u)e^{-\int_{1}^{f(u)}\frac{2K}{\gN{s}}ds}\frac{du}{u^{3/2}}&\asymp \int_{y=g(h)}^{w(h)g(h)}f(y)e^{-\frac{1}{2}\int_{1}^{y}\frac{2Kf(s)}{s^{3/2}}ds}\frac{dy}{y^{3/2}}
\\&=-\frac{1}{K}e^{-\frac{1}{2}\int_{1}^{y}\frac{2Kf(s)}{s^{3/2}}ds}\Bigg|_{g(h)}^{g(h)w(h)}.
\end{align*}
Expressing back $\frac{1}{2}\int_{1}^{y}\frac{f(s)}{s^{3/2}}ds$ in terms $\int_{1}^{f(y)}\frac{1}{\sqrt{g(s)}}ds$ and noting that $\Phi(h)\asymp e^{\int_{1}^{h}\frac{2K}{\gN{s}}ds}$ we get that the right-hand side of \eqref{eq:sufficentRep} behaves asymptotically as 
\begin{align*}
&\Phi(h)\int_{y=g(h)}^{w(h)g(h)}f(y)e^{-\int_{1}^{f(y)}\frac{2K}{\gN{s}}ds}\frac{dy}{y^{3/2}}\asymp \lbrb{1-e^{-\int_{g(h)}^{g(h)w(h)}\frac{K}{\gN{s}}ds}},
\end{align*}
which proves the sufficiency of \eqref{eq:repulsionRegion} for $w\in R_g$.

The necessity part of \eqref{eq:repulsionRegion} is trickier. Assume that $w\in R_g$ and hence the right-hand side of \eqref{eq:Rg2} holds. We recall the inequality \eqref{eq:lowerPhi}, namely $\Phi^y_h\lbrb{f(Ay)}\geq f(y)-h, y>g(h)\vee 0$. Let us feed this inequality in the right-hand side of \eqref{eq:Rg2} and consider first only the term introduced by $-h$. Recall that $\Phi(h)\asymp e^{\int_{1}^{h}\frac{2K}{\sqrt{g(s)}}ds}$. Then
\begin{align*}
&\lim\ttinf{h}h\Phi(h)\int_{20g(h)}^{w(h)g(h)}e^{-\int_{1}^{f(Ay)}\frac{2K}{\gN{s}}ds}\frac{dy}{y^{3/2}}\lesssim \\
&\lim\ttinf{h}h\Phi(h)\int_{y=20g(h)}^{w(h)g(h)}e^{-\int_{1}^{f(u)}\frac{2K}{\gN{s}}ds}\frac{du}{u^{3/2}}\leq\\ &\lim\ttinf{h}h\Phi(h)e^{-\int_{1}^{h}\frac{2K}{\gN{s}}ds}\int_{y=20g(h)}^{\infty}\frac{du}{u^{3/2}}\lesssim \lim\ttinf{h}\frac{h}{\gN{h}}=0.
\end{align*}
Therefore this term never contributes to \eqref{eq:Rg2}. Then, we need only discuss the term $f(y)$ introduced by $\Phi^y_h\lbrb{f(Ay)}\geq f(y)-h, y>g(h)\vee 0$ knowing that
\begin{align*}
&0=\lim\ttinf{h}\Phi(h)\int_{20g(h)}^{w(h)g(h)}f(y)e^{-\int_{1}^{f(Ay)}\frac{2K}{\gN{s}}ds}\frac{dy}{y^{3/2}}\\
&= \lim\ttinf{h}\sqrt{A}\Phi(h)\int_{y=\frac{20g(h)}A}^{\frac{w(h)g(h)}A}f\lbrb{\frac{u}A}e^{-\int_{1}^{f(u)}\frac{2K}{\gN{s}}ds}\frac{du}{u^{3/2}}\\
&\geq \lim\ttinf{h}\Phi(h)\int_{y=\frac{20g(h)}A}^{\frac{w(h)g(h)}A}f(u)e^{-\int_{1}^{f(u)}\frac{2K}{\gN{s}}ds}\frac{du}{u^{3/2}}=\\
&\lim\ttinf{h}-\frac{\Phi(h)}{K}e^{-\frac{1}{2}\int_{1}^{y}\frac{2Kf(s)}{s^{3/2}}ds}\Bigg|_{g(h)}^{g(h)w(h)}-\\
&\lim\ttinf{h}\Phi(h)\lbrb{\int_{g(h)}^{\frac{20g(h)}A}f(u)e^{-\int_{1}^{f(u)}\frac{2K}{\gN{s}}ds}\frac{du}{u^{3/2}}+\int_{\frac{w(h)g(h)}A}^{w(h)g(h)}f(u)e^{-\int_{1}^{f(u)}\frac{2K}{\gN{s}}ds}\frac{du}{u^{3/2}}},
\end{align*}
where we have used the fact that $f(s)/s^{1/2}\downarrow 0$ and thus $f(u/A)\geq A^{-1/2}f(u)$. We note that the very first expression in the last quantity is precisely the expression discussed in the case of sufficiency and it will converge to zero if the last two terms converge to zero and thus the necessity of \eqref{eq:repulsionRegion} will follow. We trivially estimate
\begin{align*}
	&\lim\ttinf{h}\Phi(h)\int_{g(h)}^{\frac{20g(h)}A}f(u)e^{-\int_{1}^{f(u)}\frac{2K}{\gN{s}}ds}\frac{du}{u^{3/2}}\lesssim\\
	&\lim\ttinf{h} f\lbrb{\frac{20g(h)}{A}}\int_{g(h)}^{\frac{20g(h)}A}\frac{du}{u^{3/2}}\lesssim\lim\ttinf{h}\frac{h}{\gN{h}}=0.
\end{align*}
The third term is also computed using identical calculations as above as
\begin{align*}
	&\Phi(h)\int_{\frac{w(h)g(h)}A}^{w(h)g(h)}f(u)e^{-\int_{1}^{f(u)}\frac{2K}{\gN{s}}ds}\frac{du}{u^{3/2}}=-\frac{\Phi(h)}{K}e^{-\frac{1}{2}\int_{1}^{y}\frac{2Kf(s)}{s^{3/2}}ds}\Bigg|_{\frac{g(h)w(h)}A}^{g(h)w(h)}\asymp\\
	& -e^{\int_{1}^{h}\frac{2K}{\gN{s}}ds}e^{-\int_{1}^{f(y)}\frac{2K}{\gN{s}}ds}\Bigg|_{\frac{g(h)w(h)}A}^{g(h)w(h)}=-e^{-\int_{h}^{f(y)}\frac{2K}{\gN{s}}ds}\Bigg|_{\frac{g(h)w(h)}A}^{g(h)w(h)}=o(1).
\end{align*}
The last claims follows from $\int_{1}^{\infty}\frac{1}{\gN{s}}ds=\infty.$ This finally concludes the proof of Theorem \ref{thm:repulsion}.
\end{proof}

The strong repulsion depends on the following Lemma which studies the measures $\Pbb{\tau_h\in g(h)dy ,\Oc_h}$.
\begin{lemma}\label{lem:strongRepulsion}
Let $\sigma_h(dy)=o_h(y)dy=\Pbb{\tau_h\in g(h)dy ,\Oc_h}$ be a measure on $(1,\infty)$. Then for any $h\geq h_0$ big enough there are constants $0<c<1<C<\infty$ such that for $y>20$ we have that
\begin{equation}\label{eq:strongRepulsion}
\frac{c}{y^{3/2}}\Pbb{\Oc_h}\leq o_h(y)\leq \frac{C}{y^{3/2}}\Pbb{\Oc_h}.
\end{equation}
\end{lemma}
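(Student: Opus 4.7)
The key idea is the one-big-jump principle: for $y > 20$, since $h^2/g(h) \to 0$ under the growth condition \eqref{eq:conditionGrowth}, the event $\{\tau_h = g(h)y,\,\Oc_h\}$ is dominated by configurations where $\tau$ makes a single large jump of size $\approx g(h)y$ at a roughly uniform time $S \in (0,h)$, with a negligible small-jump remainder. Conditional on such a configuration, $\Oc_h$ reduces to $\Oc_S$ for the small-jump part, since the big jump makes $\tau_t > g(h) \geq g(t)$ automatic on $[S,h]$. Heuristically this yields $\Pbb{\Oc_h \mid \tau_h = g(h)y} \asymp \int_0^h \Pbb{\Oc_s}\,ds/h = \Phi(h)/h$, which combined with the stable-$1/2$ density $\Pbb{\tau_h \in g(h)dy}/dy \asymp Kh/(y^{3/2}\sqrt{g(h)})$ (valid for $y > 20$ since $e^{-h^2/(2g(h)y)}\to 1$) produces the claimed $o_h(y) \asymp \Pbb{\Oc_h}/y^{3/2}$.

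Rigorously I would apply the compensation (Slivnyak) formula to the Poisson point measure of jumps of $\tau$. Taking the test function $F = \ind{\Oc_h,\,\tau_h \in dz}$ and summing over jumps of size $> g(h)$, this gives
\[
\Eb\,N^{g(h)}_h \ind{\Oc_h,\tau_h\in dz} = \int_0^h ds \int_{g(h)}^\infty K\xi^{-3/2}\,\Pbb{\Oc_s,\,\tau_h \in dz - \xi}\,d\xi,
\]
because on the augmented path $\tau + \xi\delta_s$ (with $\xi > g(h)$) the event $\Oc_h$ trivially reduces to $\Oc_s$. The left side bounds $\Pbb{\Oc_h,\tau_h\in dz,\,N^{g(h)}_h\geq 1}$, and the $N^{g(h)}_h=0$ complement is superexponentially small in $y$ via exponential Markov on the truncated subordinator $\tau^{g(h)}_h$. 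After substitution $w = z-\xi$ and splitting the $w$-integral at $z/2$: the piece $w \leq z/2$ yields $\leq 2^{3/2}K z^{-3/2}\,\Phi(h)$ (using $(z-w)^{-3/2}\leq 2^{3/2}z^{-3/2}$ and $\int p_s(w)\,dw \leq \Pbb{\Oc_s}$ where $p_s(w)dw = \Pbb{\Oc_s,\tau_h\in dw}$), which after multiplying by the Jacobian $g(h)$ and invoking $\Pbb{\Oc_h}\sim 2K\Phi(h)/\sqrt{g(h)}$ from Theorem \ref{thm:asymptoticInFiniteCase} gives $o_h(y) \leq C\Pbb{\Oc_h}/y^{3/2}$. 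For the matching lower bound I restrict the Palm integral to $\{\tau_h < g(h)/2\}$, where $(z-\xi)^{-3/2} \geq z^{-3/2}$ and $\Pbb{\Oc_s,\tau_h < g(h)/2}\gtrsim \Pbb{\Oc_s}$ by the stable tail $\Pbb{\tau_h\geq g(h)/2}\leq Ch/\sqrt{g(h)}\to 0$.

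The main obstacle is controlling the complementary "tail" contribution from $w \in (z/2,\,z-g(h))$, corresponding to effectively two-big-jump configurations. A naive estimate $p_s(w) \leq f_h(w)$ gives a bound of order $h^2/(g(h)y^{3/2})$, which is not automatically dominated by $\Pbb{\Oc_h}/y^{3/2} \asymp \Phi(h)/(\sqrt{g(h)}y^{3/2})$ since $h^2/\sqrt{g(h)}\gg \Phi(h)$ in the recurrent regime. Dealing with this requires either a double-Palm argument to extract the second big jump explicitly as $O((h/\sqrt{g(h)})^2)$, or the sharp large-deviation estimate $\Pbb{\tau^{g(h)}_h \geq g(h)u}\leq 2(Ch/\sqrt{g(h)})^{u/2}$ obtained via exponential Markov with the optimal tilt $\lambda \asymp \log(\sqrt{g(h)}/h)/g(h)$, combined with the full strength of the growth condition \eqref{eq:conditionGrowth}. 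This tail-analysis step is where the technical hypothesis $g(t)/(t^2 \ln^{8/5+\epsilon}(t))\to\infty$ is genuinely used.
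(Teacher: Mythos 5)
Your approach via the Mecke/Slivnyak compensation formula for the jump point process is in the same one-big-jump spirit as the paper, but the paper proceeds differently: it disintegrates on the first passage time $T=\inf\{t:\tau_t>g(h)\}$ and the first big-jump time $\Delta_1$, and decomposes $\sigma_h$ into three sub-measures ($\tau_T\leq 2g(h)$; $\Delta_1\leq h$ with $S_{\Delta}\leq g(h)y/2$; and $\Delta_1\leq h$ with $S_\Delta>g(h)y/2$), each handled by a direct density computation. The decomposition keeps track of the position $\tau_{\Delta_1-}\leq g(h)$ just before the big jump, which does the bookkeeping that in your set-up is obscured inside the residual $w=z-\xi$.

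More importantly, your sketch has two genuine gaps. First, you correctly identify that the ``two-big-jump'' tail piece $w\in(z/2,\,z-g(h))$ is the real obstacle and that the naive density bound $p_s(w)\leq f_h(w)$ yields $h^2/(g(h)y^{3/2})$, which is \emph{not} $o\lbrb{\Pbb{\Oc_h}/y^{3/2}}$ since $h^2/\sqrt{g(h)}\gg\Phi(h)$ in the recurrent regime. You then name two possible remedies (a second application of the Mecke formula, or a sharp Chernoff bound on $\tau^{g(h)}_h$) but carry out neither; this is precisely the part where the paper's explicit three-way split with the factor $h/\sqrt{g(h)}=o(1)$ pays off, and leaving it open leaves the upper bound incomplete. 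Second, your lower bound relies on $\Pbb{\Oc_s,\tau_h<g(h)/2}\gtrsim\Pbb{\Oc_s}$ uniformly in $s\leq h$, which is false for $s$ close to $h$: there $\Pbb{\Oc_s}\asymp\Pbb{\Oc_h}\asymp\Phi(h)/\sqrt{g(h)}$, whereas $\Pbb{\tau_h\geq g(h)/2}\asymp h/\sqrt{g(h)}$ is much larger since $\Phi(h)\ll h$, so the crude subtraction $\Pbb{\Oc_s}-\Pbb{\tau_h\geq g(h)/2}$ is negative and even after integrating in $s$ the error term $h^2/\sqrt{g(h)}$ dominates $\Phi(h)$. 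The paper avoids this entirely: it proves the matching lower bound $\Pbb{\Delta_1\leq h,\tau_{\Delta_1-}\leq g(h);\Oc_h}\gtrsim\Pbb{\Oc_h}$ by contradiction, showing that otherwise $\Pbb{\tau_h>20g(h);\Oc_h}/\Pbb{\Oc_h}\to 0$ along a subsequence, which it then refutes by a direct computation using $\Pbb{S_{\Delta}>20g(h)}=\PP(20g(h))/\PP(g(h))$ being an absolute constant. You would need an argument of comparable subtlety to complete the lower bound.
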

\begin{proof}[Proof of Lemma \ref{lem:strongRepulsion}]
The absolute continuity of $\sigma_h(dy)$ follows immediately from $\sigma_h(dy)\ll \Pbb{\tau_h\in dy}\ll dy$. For the proof we introduce the quantities $T:=T_h=\inf\{t>0:\,\tau_t>g(h)\}$, in the usual sense $\Delta=\Delta_1=\inf\{t>0:\,\tau_t-\tau_{t-}>g(h)\}$ and $S_{\Delta}=\tau_{\Delta}-\tau_{\Delta-}$. We then have that 
\begin{equation}\label{eq:sizeOfJump}
\Pbb{S_{\Delta}\in g(h)dy}=\frac{\Pi(g(h)dy)}{\PP(g(h))}=\frac{dy}{y^{3/2}},
\end{equation}
which is a standard property for any \LL process, namely conditionally that a jump exceeds a level $a>0$ than its size is independent of the time of the jump and the past of the process and its distribution is given by the first ratio in \eqref{eq:sizeOfJump}. The second ratio in \eqref{eq:sizeOfJump} holds only in this special instance of a stable subordinator of index $1/2$.
Furthermore, denote by $\tau_{T-}$ the position prior to the passage time. Finally, note that we have the immediate identity from \eqref{eq:lawTau}
\begin{equation}\label{eq:localTau}
\Pbb{\tau_v\in a du}= \frac{1}{u\sqrt{2\pi av^{-2} u}}e^{-\frac{1}{av^{-2}u}}\,\,du.
\end{equation}
We consider the measure $\sigma_h(dy)$ on three possibly overlapping regions. We start with $\sigma^{1}_h(dy):=\sigma_h\lbrb{dy,\tau_T\leq 2g(h)}$. Disintegrating on $\tau_{T}\in\lbrb{g(h),2g(h)}$ in this scenario we get, using $\tau_{h}=\tau_T+\tau_{h}-\tau_T\stackrel{d}{=}\tau_T+\tau'_{h-T}$ on $\lbcurlyrbcurly{T\leq h}$ with $\tau'$ an independent copy of $\tau$, that
\begin{align}\label{eq:estimateRepuslion1}
\nonumber&\sigma^{1}_h(dy)=o^{1}_h(y)dy=\int_{s=0}^{h}\int_{u=1}^{2}\Pbb{\tau_{h-s}\in g(h)\lbrb{dy-u}}\Pbb{T\in ds,\,\tau_{T}\in g(h)du;\,\Oc_h}\leq \\
\nonumber&C\int_{s=0}^{h}\int_{u=1}^{2}\frac{h-s}{\sqrt{g(h)}(y-u)^{3/2}}\Pbb{T\in ds,\,\tau_{T}\in g(h)du;\,\Oc_h}dy\leq\\
& C_1\frac{h}{\gN{h}}\frac{dy}{y^{3/2}}\Pbb{\Oc_h}=o(1)\frac{dy}{y^{3/2}}\Pbb{\Oc_h},
\end{align}
where we have used \eqref{eq:localTau} and assumed that $y>4$ so that $y-u>y/2$. We note that this density in fact decays faster with factor $\frac{h}{\gN{h}}=o(1)$ than the required \eqref{eq:strongRepulsion}.

In the remaining two scenarios we employ that 
\[\lbcurlyrbcurly{\tau_T>2g\lbrb{h}}\cap\Oc_h \subset\lbcurlyrbcurly{\Delta_1\leq h,\,T=\Delta_1}\cap\Oc_h=\lbcurlyrbcurly{\Delta_1\leq h,\,\tau_{\Delta_1-}<g(h)}\cap\Oc_h,\]
which follows from the definitions of $T,\Delta_1,\Oc_h=\lbcurlyrbcurly{\tau_s>g(s),\,s\leq h}$ and the fact that $\tau$ is a subordinator, i.e. an increasing \LL process.

First, we consider $\sigma^{2}_h(dy):=\sigma_h\lbrb{dy,\Delta_1\leq h,\tau_{\Delta_1-}<g(h),S_\Delta<g(h)y/2}$ which majorizes in terms of measures $\sigma_h\lbrb{dy,\tau_T>2g\lbrb{h},S_\Delta<g(h)y/2}$. We disintegrate with respect to $\Delta_1$ and the position prior to the jump to get 
\begin{align*}
&\sigma^{2}_h(dy)=o^{2}_h(y)dy=\\
&\int_{s=0}^{h}\int_{w=0}^{1}\int_{v=1}^{y/2}\Pbb{\tau_{h-s}\in g(h)\lbrb{dy-v-w}}\Pbb{\Delta_1\in ds,\tau_{\Delta_1-}\in g(h)dw,S_\Delta\in g(h)dv;\,\Oc_h}.
\end{align*}
Using the definition of $\Oc_h=\lbcurlyrbcurly{\tau(s)>g(s),\,s\leq h}$, the fact that $g$ is an increasing function and $\tau$ is a subordinator we have the identity
\begin{align*}
&\Pbb{\Delta_1\in ds,\tau_{\Delta_{1}-}\in g(h)dw, S_\Delta\in g(h)dv;\,\Oc_h}=\\
&\quad\quad\quad\quad\quad \quad \Pbb{\Delta_1\in ds,\tau_{\Delta_{1}-}\in g(h)dw,S_\Delta\in g(h)dv;\,\Oc_{\Delta_{1}-}}.
\end{align*}
Since conditionally on $\lbcurlyrbcurly{\Delta_1=s}$ the jump $S_{\Delta}$ is independent of the past we get that
\begin{align*}
&\Pbb{\Delta_1\in ds,\tau_{s-}\in g(h)dw,S_\Delta\in g(h)dv;\,\Oc_{\Delta_{1}-}}=\\
&\quad\quad\quad\quad\quad \quad \Pbb{S_\Delta\in g(h)dv}\Pbb{\Delta_1\in ds,\tau_{\Delta_{1}-}\in g(h)dw;\,\Oc_{\Delta_{1}-}}=\\
&\quad\quad\quad\quad\quad \quad \Pbb{S_\Delta\in g(h)dv}\Pbb{\Delta_1\in ds,\tau_{\Delta_{1}-}\in g(h)dw;\,\Oc_h}
\end{align*}
Substituting this back above and using \eqref{eq:sizeOfJump} for the law of $S_\Delta$ we get that
\begin{align*}
&\sigma^{2}_h(dy)=o^{2}_h(y)dy=\\
&\int_{s=0}^{h}\int_{w=0}^{1}\int_{v=1}^{y/2}\frac{h-s}{\gN{h}\lbrb{y-w-v}^{3/2}}\Pbb{\Delta_1\in ds,\tau_{\Delta_{1}-}\in g(h)dw;\,\Oc_h}\frac{dv}{v^{3/2}}\,\,dy.
\end{align*}
 Using that $y-w-v\geq y/2-1\geq  y/3$ once $y>10$ we have that
\begin{equation}\label{eq:estimateRepuslion2}
\sigma^{2}_h(dy)=o^{2}_h(y)dy\leq \frac{h}{\sqrt{g(h)}}\frac{dy}{y^{3/2}}\Pbb{\Oc_h}=o(1)\frac{dy}{y^{3/2}}\Pbb{\Oc_h}.
\end{equation}
Secondly, we study the measure  $\sigma^{3}_h(dy):=\sigma_h\lbrb{dy,\Delta_1\leq h,\tau_{\Delta_1-}<g(h),S_\Delta>g(h)y/2}$, which majorizes in terms of measures $\sigma_h\lbrb{dy,\tau_T>2g\lbrb{h},S_\Delta>g(h)y/2}$.   We similarly disintegrate  the measure $\sigma^3_h(dy)$ to get
\begin{align*}
&\sigma^{3}_h(dy)=o^{3}_h(y)dy=\\
&\int_{s=0}^{h}\int_{w=0}^{1}\int_{v=y/2}^{\infty}\Pbb{\tau_{h-s}\in g(h)\lbrb{dy-v-w}}\Pbb{\Delta_1\in ds,\tau_{\Delta_{1}-}\in g(h)dw,S_\Delta\in g(h)dv;\,\Oc_h}\leq\\
&\int_{s=0}^{h}\int_{w=0}^{1}\int_{v=y/3-w}^{y-w}\Pbb{\tau_{h-s}\in g(h)\lbrb{dy-v-w}}\Pbb{\Delta_1\in ds,\tau_{\Delta_{1}-}\in g(h)dw;\,\Oc_h}\frac{dv}{v^{3/2}}.
\end{align*}
We note from \eqref{eq:localTau} that for $y>v+w$
\[\Pbb{\tau_{h-s}\in g(h)\lbrb{dy-v-w}}=\frac{1}{\sqrt{2\pi\frac{g(h)(y-v-w)}{(h-s)^2}}\lbrb{y-v-w}} e^{-\frac{1}{2\frac{g(h)(y-v-w)}{(h-s)^2}}}dy.\]
Put $a(h,s)=g(h)/(h-s)^2$. Change variables in $v$ such that $v\mapsto z-w $ and then $z\to y\rho$ in the last integral to get that
\begin{align*}
	&\sigma^{2}_h(dy)=o^{2}_h(y)dy\leq\\
	&\int_{s=0}^{h}\int_{w=0}^{1}\int_{\rho=1/3}^{1}\frac{1}{\sqrt{2\pi a(h,s)(y-y\rho)}\lbrb{y-y\rho} }e^{-\frac{1}{2a(h,s)(y-\rho)}}\Pbb{\Delta_1\in ds,\tau_{\Delta_{1}-}\in g(h)dw;\,\Oc_h}\frac{yd\rho}{\lbrb{y\rho-w}^{3/2}}dy
\end{align*}
Furthermore, using the inequalities  $y\rho-w>y\rho-1>y\rho/4$ once $y>20$ since $\rho\in(1/3,1)$ we obtain that 
\begin{align*}
&\sigma^{3}_h(dy)=o^{3}_h(y)dy\leq\\
&\int_{s=0}^{h}\int_{w=0}^{1}\int_{\rho=1/3}^{1}\frac{1}{\sqrt{2\pi a(h,s)y(1-\rho)}\lbrb{y(1-\rho)}} e^{-\frac{1}{2a(h,s)y(1-\rho)}} \times\\
&\frac{4^{3/2}d\rho}{\sqrt{y}\rho^{3/2}}\Pbb{\Delta_1\in ds,\tau_{\Delta_{1}-}\in g(h)dw;\,\Oc_h}\,dy\\
&\leq \frac{12^{3/2}}{y^{3/2}}\int_{s=0}^{h}\int_{w=0}^{1}\int_{\sigma=0}^{\frac{2}{3}}\frac{1}{\sqrt{2\pi a(h,s)y\sigma}\sigma} e^{-\frac{1}{2a(h,s)y\sigma}}d\sigma\Pbb{\Delta_1\in ds,\tau_{\Delta_{1}-}\in g(h)dw;\,\Oc_h}\,dy=\\
&\frac{12^{3/2}}{y^{3/2}}\int_{s=0}^{h}\int_{w=0}^{1}\int_{\chi=0}^{\frac{4}{3}ya(h,s)}\frac{1}{\sqrt{\pi \chi}\chi} e^{-\frac{1}{\chi}}d\chi\Pbb{\Delta_1\in ds,\tau_{\Delta_{1}-}\in g(h)dw;\,\Oc_h}\,dy.
\end{align*}
Since $\int_{\chi=0}^{\infty}\chi^{-3/2}e^{-1/\chi}d\chi<\infty$ we get that for $y>20$
\begin{equation}\label{eq:estimateRepuslion3}
\sigma^{3}_h(dy)=o^{3}_h(y)dy\leq C\frac{dy}{y^{3/2}}\Pbb{\Delta_1\leq h,\tau_{\Delta_1-}\leq g(h);\,\Oc_h}
\end{equation}
 By trivial estimates using \eqref{eq:estimateRepuslion1}, \eqref{eq:estimateRepuslion2} and  \eqref{eq:estimateRepuslion3} conclude  the upper bound for \eqref{eq:strongRepulsion}. For the lower bound consider that \[\sigma^3_h(dy)=\sigma_h\lbrb{dy,\Delta_1\leq h,\tau_{\Delta_1-}<g(h),S_\Delta>g(h)y/2}\leq \sigma_h(dy)\]
since $\lbcurlyrbcurly{\Delta_1\leq h,\tau_{\Delta_1-}<g(h),S_\Delta>g(h)y/2}\cap \Oc_h\subset\Oc_h$. 
Then the lower bound  for \eqref{eq:strongRepulsion} follows by observing that 
\begin{align*}
&\sigma^{3}_h(dy)=o^{3}_h(y)dy=\\
&\int_{s=0}^{h}\int_{w=0}^{1}\int_{v=y/2}^{\infty}\Pbb{\tau_{h-s}\in g(h)\lbrb{dy-v-w}}\Pbb{\Delta_1\in ds,\tau_{\Delta_{1}-}\in g(h)dw,S_\Delta\in g(h)dv;\,\Oc_h}\geq\\
&\int_{s=0}^{h}\int_{w=0}^{1}\int_{v=2y/3-w}^{y-w}\Pbb{\tau_{h-s}\in g(h)\lbrb{dy-v-w}}\Pbb{\Delta_1\in ds,\tau_{\Delta_{1}-}\in g(h)dw;\,\Oc_h}\frac{dv}{v^{3/2}}.
\end{align*}
once $2/3y-w\geq y/2$ which holds for $y>20$.
Then we feed in the expression for $\Pbb{\tau_{h-s}\in g(h)\lbrb{dy-v-w}}$, change in the same way the variables, estimate first $y\rho-w<y\rho$ and then instead of estimating from above $\rho^{-3/2}$ we estimate it from below with $1$ since $\rho\in\lbrb{2/3,1}$ to get similarly that
\begin{align*}
&\sigma^{3}_h(dy)=o^{3}_h(y)dy\geq\\
&\frac{1}{y^{3/2}}\int_{s=0}^{h}\int_{w=0}^{1}\int_{\chi=0}^{\frac{2}{3}ya(h,s)}\frac{1}{\sqrt{\pi \chi}\chi} e^{-\frac{1}{\chi}}d\chi\Pbb{\Delta_1\in ds,\tau_{\Delta_{1}-}\in g(h)dw;\,\Oc_h}\,dy.
\end{align*}
However, since $a(h,s)=\frac{g(h)}{\lbrb{h-s}^2}\geq \frac{g(h)}{h^2}\to\infty,$ as $h\to \infty$ we get with some $C>0$ that 
\begin{equation}\label{eq:estimateRepulsion4}
\sigma^{3}_h(dy)=o^{3}_h(y)dy\geq C\frac{dy}{y^{3/2}}\Pbb{\Delta_1\leq h,\tau_{\Delta_{1}-}\leq g(h);\,\Oc_h}.
\end{equation}
In the sense of measures
\[\sigma^{3}_h(dy)\leq\sigma_h\lbrb{dy}\leq \sigma^{1}_h(dy)+\sigma^{2}_h(dy)+\sigma^{3}_h(dy),\text{ $y>20$}.\]
If we assume that over a subsequence $h_i\uparrow\infty,$ $\Pbb{\Delta_1\leq h_i,\tau_{\Delta_{1}-}\leq g(h_i);\,\Oc_{h_i}}=o(1)\Pbb{\Oc_{h_i}},$ as $i\to\infty$ then \eqref{eq:estimateRepuslion1}, \eqref{eq:estimateRepuslion2} \eqref{eq:estimateRepuslion3} and \eqref{eq:estimateRepulsion4} yield that in sense of measures $\sigma_{h_i}\lbrb{dy}=o(1)\Pbb{\Oc_{h_i}}dy/y^{3/2}, y>20$, as $i\to\infty$. Therefore upon this assumption for $i$ big enough and $y>20$ we have that 
\[\Pbb{\tau_{h_i}>20g(h_i);\,\Oc_{h_i}}=\int_{20}^{\infty}\sigma_{h_i}(dy)=o(1)\Pbb{\Oc_{h_i}}\int_{B}^{\infty}\frac{dy}{y^{3/2}}=o(1)\Pbb{\Oc_{h_i}}.\]
Next we will provide a contradiction by showing that
\begin{equation*}
  \liminf_{h\to\infty}\frac{\Pbb{\tau_{h}>20g(h);\,\Oc_{h}}}{\Pbb{\Oc_h}}>0.
\end{equation*}
However, recalling the usual notation $\Delta^a_1=\inf\lbcurlyrbcurly{t>0:\,\tau_t-\tau_{t-}>a},\,a>0$ we see that since $\tau$ is a subordinator further it suffices to show that
\begin{equation}\label{eq:Last}
 \liminf_{h\to\infty}\frac{\Pbb{\tau_{h}>20g(h);\,\Oc_{h}}}{\Pbb{\Oc_h}}\geq \liminf_{h\to\infty}\frac{\Pbb{\Delta_1=\Delta^{20g(h)}_1\leq h;\,\Oc_{h}}}{\Pbb{\Oc_h}}>0.
\end{equation}
Then we get that
\begin{align*}
\Pbb{\Delta_1=\Delta^{20g(h)}_1\leq h;\,\Oc_{h}}=\int_{0}^{h}\Pbb{\Oc^{g(h)}_s}\Pbb{\Delta_1\in ds;\Delta_1=\Delta^{20g(h)}_1}.
\end{align*}
However, 
\begin{align*}
&\Pbb{\Delta_1\in ds;\Delta_1=\Delta^{20g(h)}_1}=\Pbb{\Delta_1\in ds;S_{\Delta}>20h}=\\
&\Pbb{\Delta_1\in ds}\Pbb{S_{\Delta}>20h}=\frac{\PP\lbrb{20g(h)}}{\PP\lbrb{g(h)}}\Pbb{\Delta_1\in ds}=C\Pbb{\Delta_1\in ds}.
\end{align*}
Feeding the last expression back above we get
\[\Pbb{\Delta_1=\Delta^{20g(h)}_1\leq h;\,\Oc_{h}}=C\Pbb{\Delta_1\leq h;\,\Oc_h}\sim C\Pbb{\Oc_h},\]
where the last follows from the first relation of \eqref{eq:asymptoticInFiniteCase-1} of Theorem \ref{thm:asymptoticInFiniteCase} and $C\in\lbrb{0,1}$ is an absolute constant. Therefore we conclude from \eqref{eq:Last} that
\[ \liminf_{h\to\infty}\frac{\Pbb{\tau_{h}>20g(h);\,\Oc_{h}}}{\Pbb{\Oc_h}}\geq C>0\]
and thus a contradiction is furnished.

 \end{proof}

\end{document}